\numberwithin{equation}{section}
\newtheorem{theorem}{Theorem}[section]
\newtheorem{corollary}[theorem]{Corollary}
\newtheorem{lemma}[theorem]{Lemma}
\newtheorem{proposition}[theorem]{Proposition}
\theoremstyle{definition}
\newtheorem{remark}[theorem]{Remark}
\theoremstyle{definition}
\theoremstyle{definition}
\newtheorem{assumption}[theorem]{Assumption}
\def\vu{\textit{\textbf{u}}}
\def\bA{\mathbb{A}}
\def\bR{\mathbb{R}}
\def\bZ{\mathbb{Z}}
\def\bH{\mathbb{H}}
\def\bC{\mathbb{C}}
\def\bL{\mathbb{L}}
\def\bO{\mathbb{O}}
\def\fB{\mathfrak{B}}
\def\fQ{\mathfrak{Q}}
\def\cC{\mathcal{C}}
\def\cD{\mathcal{D}}
\def\cH{\mathcal{H}}
\def\cM{\mathcal{M}}
\def\cO{\mathcal{O}}
\def\cT{\mathcal{T}}
\def\cL{\mathcal{L}}
\newcommand{\Div}{\operatorname{div}}
\def\dashint{\operatorname%
{\,\,\text{\bf--}\kern-.98em\DOTSI\intop\ilimits@\!\!}}
\begin{document}
\title[Parabolic equations in simple convex polytopes]{Parabolic equations in simple convex polytopes with time irregular coefficients}

\author[H. Dong]{Hongjie Dong}
\address[H. Dong]{Division of Applied Mathematics, Brown University,
182 George Street, Providence, RI 02912, USA}
\email{Hongjie\_Dong@brown.edu}
\thanks{H. Dong was partially supported by the NSF under agreement DMS-1056737.}

\author[D. Kim]{Doyoon Kim}
\address[D. Kim]{Department of Applied Mathematics, Kyung Hee University, 1732 Deogyeong-daero, Giheung-gu, Yongin-si, Gyeonggi-do 446-701, Republic of Korea}
\email{doyoonkim@khu.ac.kr}
\thanks{D. Kim was supported by Basic Science Research Program through the National Research Foundation of Korea (NRF) funded by the Ministry of Education, Science and Technology (2011-0013960).}

\subjclass[2010]{35K10, 35B65}

\keywords{Second-order parabolic equations, boundary value problems, measurable coefficients}

\begin{abstract}
We prove the $W^{1,2}_p$-estimate and solvability for the Dirichlet
problem of second-order parabolic equations in simple convex polytopes with time irregular coefficients, when $p\in (1,2]$.
We also consider the corresponding Neumann problem in a half space when $p\in [2,\infty)$. Similar results are obtained for equations in a half space with coefficients which are measurable in a tangential direction and have small mean oscillations in the other
directions.
\end{abstract}

\maketitle

\section{Introduction}        \label{sec1}

This paper is concerned with the $L_p$-theory of second-order linear parabolic equations in non-divergence form with discontinuous coefficients in simple convex polytopes. By a simple polytope we mean a $d$-dimensional polytope (not necessarily bounded) each of whose vertices, if there are any, is adjacent to exactly $d$ edges. This work is a continuation of \cite{Dong12b}, in which the first author studied elliptic equations in a half space or in a 2D convex wedge. Equations with discontinuous coefficients in non-smooth domains emerge from problems in mechanics, engineering, and biology, to name a few.

The $L_p$-theory of non-divergence form second-order elliptic and
parabolic equations in the whole space or smooth domains with discontinuous coefficients was studied extensively in the last fifty years. According to the well-known counterexamples of Ural'tseva \cite{Ur67} and Nadirashvili \cite{N97}, in general there does not exist a solvability theory for uniformly elliptic operators with general bounded and measurable coefficients. Many efforts have been made to treat particular types of discontinuous coefficients. See, for instance, \cite{BN54, Ca67, Kr70, Lo72a, Lo72b, Sa76, Chi} and recent work \cite{Kr07, Kr07_mixed, Ki07, KK07}. In these papers, either the leading coefficients are assumed to be measurable with respect to one or two variables and sufficiently regular with respect to the other variables, or $p$ is in a small neighborhood of $2$.

There is also a vast literature on the $L_p$-theory for elliptic and parabolic equations with smooth coefficients in domains with wedges or with conical or angular points. See, for instance, \cite{Ko67,Gr85, KO83,Da88,NP94, KMR97,MR03, MR08} and the references therein. In \cite{Lo75,Lo77a} Lorenzi considered elliptic equations with piecewise constant coefficients in two sub-angles of an angular domain in the plane with a zero right-hand side and inhomogeneous Dirichlet boundary conditions. Solvability results in weighted Sobolev spaces were established for the heat equation in a dihedral angle by Solonnikov \cite{So84, So01}, and in a wedge with edge of an arbitrary codimension by Nazarov \cite{Na01} under the assumption that the base of the wedge has a smooth boundary. The proofs in these two papers are based on the estimates of the corresponding Green's functions. Very recently Kozlov and Nazarov \cite{KN11} extended the result in \cite{Na01} to parabolic equations with leading coefficients which are measurable functions with respect to the time variable. This result is in the same spirit as Lieberman \cite{Li92} and Krylov \cite{Kr01, Kr07}, in which it is shown that for certain Schauder and $L_p$-estimates of parabolic equations one does not need any regularity assumptions on the coefficients with respect to the time variable. We also mention that the Dirichlet and Neumann boundary value problems for the heat equation in bounded Lipschitz domains were studied by Wood \cite{Wo07}. For divergence type parabolic equations in irregular domains, we refer the reader to a recent paper \cite{AK11} and the references therein.

The objective of this paper is to study $L_p$-estimates for parabolic equations in simple convex polytopes. For $p\in (1,2]$, we prove the $W^{1,2}_p$-estimate and solvability for the Dirichlet problem with most leading coefficients merely measurable with respect to the time variable and one spacial variable (Theorem \ref{thm01}). This range of $p$ is sharp even for Laplace equations in polygons. See, for instance, \cite[Theorem 4.3.2.4]{Gr85}, or \cite[Sect. 4.3.1]{MR08}. We also consider the corresponding Neumann problem in a half space when $p\in [2,\infty)$ (Theorem \ref{thm02}). Similar results are obtained for equations in a half space with coefficients which are measurable in a tangential direction and have small mean oscillations in the other directions
(Theorems \ref{thm3} and \ref{thm4}). At a conceptual level, Theorem \ref{thm01} appears to be close to the main result in \cite{KN11} mentioned above. However, the base of the wedge which we treat is non-smooth and is not covered by the results in \cite{KN11}. On the other hand, we only consider estimates in Sobolev spaces without weights. At a technical level, our arguments are completely different from those in \cite{KN11}.

For the proofs, we note that the classical Calder\'on--Zygmund approach cannot be applied to our problem due to the lack of regularity of the coefficients and the domain. Our proofs are motivated by Krylov \cite{Kr07}, in which the author presented a unified
approach to investigating the $L_p$-solvability of both
divergence and non-divergence form  parabolic (and elliptic)  equations in the whole space when the leading coefficients have vanishing mean oscillations (VMO) in the spatial variables and are measurable in the time variable. However, this approach is not directly applicable here by the same reason above. Roughly speaking, our main idea of the proofs is that after a suitable change of variables we can rewrite the operator into a divergence form operator of a certain type. With the Dirichlet boundary condition, we prove that certain first-order derivatives of the solution satisfy divergence form equations with the conormal derivative boundary condition\footnote{This crucial observation was used before by Jensen \cite{Je80} and Lieberman \cite{Li92} in different contexts.}. While with the Neumann boundary condition, the normal derivative of the solution satisfies a divergence form equation of a special type with the Dirichlet boundary condition. Therefore, we reduce the problem to certain $L_p$-estimates for these divergence form
operators, for which the interior and boundary
$C^{\alpha}$-estimates of certain first derivatives are available due to the De Giorgi--Nash--Moser estimate. Then we are able to use Krylov's approach mentioned above to establish the desired $L_p$-estimates.

The remaining part of the paper is organized as follows. We introduce some notation and state our main theorems, Theorems \ref{thm01} and \ref{thm02}, in the next section. Section \ref{secNonDiv2} is devoted to the proof of Theorem \ref{thm01} in the special case when $p=2$, with more general boundary conditions. To prove the general case, in Section \ref{sec3} we consider divergence form parabolic operators of two different types. We obtain $L_p$-estimates for these operators, which are crucial in the proofs of the main theorems. We complete the proofs of Theorems \ref{thm01} and \ref{thm02} in Section \ref{sec4}. In Section \ref{sec6}, we treat parabolic equations in a half space with coefficients which are measurable with respect to a tangential direction of the boundary of the half space and VMO with respect to the other variables. 

\section{Main results}\label{mains}

First let us fix some notation. For $r>0$ and $(t_0,x_0)\in \bR^{d+1}$, we write
$$
B_r(x_0) = \{ x \in \bR^d : |x-x_0| < r \},
$$
$$
Q_r(t_0,x_0) = \{ (t,x) \in \bR^{d+1}: t_0-r^2 < t < t_0, |x-x_0| < r \}.
$$
For any integer $1\le k\le d$, we define a wedge with edges of codimension $d-k$
$$
\Xi^k=\Xi^k_d:=\{x\in \bR^d\,:\,x_1,\ldots,x_k\in \bR^+\},
$$
which is a special type of simple convex polytopes.
Its boundary $\partial \Xi^k$ is composed of $k$ faces
$$
\Gamma^{k,i}:=\{x\in \bR^d\,:\,x_i=0\}\cap\partial \Xi^k,\quad 1\le i\le k.
$$
Note that $\Xi^1=\bR^d_+=\{x = (x_1,\ldots,x_d) \in \bR^d: x_1 > 0 \}$ and $\Gamma^{1,1}=\partial\bR^d_+$.
We set
$$
B_r^{k}(x_0)=B_r(x_0) \cap \Xi^k,\quad
Q_r^{k}(t_0,x_0)=(t_0-r^2,t_0)\times B_r^k(x_0),
$$
$$
\Gamma_r^{k,i}(x_0)=B_r(x_0) \cap \Gamma^{k,i},\quad 1\le i\le k.
$$
We write, for example, $B_r^k$ if $x_0 = 0$ and $Q_r^k$ if $(t_0,x_0) = (0,0)$.
If $\cD \subset \bR^d$ or $\cD \subset \bR^{d+1}$, where $\cD$ is not necessarily open in $\bR^d$ or $\bR^{d+1}$, we denote $\varphi \in C_0^{\infty}(\cD)$ if $\varphi$ is infinitely differentiable on $\cD$ and its compact support belongs to $\cO \cap \cD$, where $\cO$ is an open set in $\bR^d$ or $\bR^{d+1}$.
For the average of $f$ over $\cD \subset \bR^{d+1}$, we use the notation
$$
\dashint_{\cD} f(t,x) \, dx \, dt := \frac{1}{|\cD|} \int_{\cD} f(t,x)  \, dx \, dt ,
$$
where $|\cD|$ is the $d+1$-dimensional Lebesgue measure of $\cD$.

We assume that the operator is uniformly non-degenerate, i.e., there exists $\delta \in (0,1)$ such that
\begin{equation}
                                    \label{eq5.21}
\delta|\xi|^2 \le \sum_{i,j=1}^d a^{ij}\xi_i \xi_j,
\quad |a^{ij}| \le \delta^{-1}
\end{equation}
for all $\xi \in \bR^d$.
In Theorems \ref{thm01} and \ref{thm02} below we assume that the coefficients $a^{ij}$ are measurable functions of $(t,x_d) \in \bR^2$ except $a^{dd}$ which is a measurable function of either $t$ or $x_d$. That is,
\begin{equation}
							\label{eq5.22}
\begin{aligned}
a^{ij}=a^{ij}(t,x_d)
\quad
\text{if}
\quad
(i,j) \ne (d,d).
\\
a^{dd} = a^{dd}(t)
\quad
\text{or}
\quad
a^{dd} = a^{dd}(x_d).
\end{aligned}
\end{equation}

Throughout the paper, we set $\bR_T := (-\infty,T)$, where $T \in (-\infty,\infty]$.
Let $W_p^{1,2}(\bR_T \times \Xi^k)$ be the solution spaces for non-divergence type parabolic equations defined by
$$
W_p^{1,2}(\bR_T \times \Xi^k)
= \{ u, Du, D^2u, u_t \in L_p(\bR_T \times \Xi^k)\},
$$
\begin{multline*}
\|u\|_{W_p^{1,2}(\bR_T \times \Xi^k)}
= \|u\|_{L_p(\bR_T \times \Xi^k)}
+ \|Du\|_{L_p(\bR_T \times \Xi^k)}
\\
+ \|D^2u\|_{L_p(\bR_T \times \Xi^k)}
+ \|u_t\|_{L_p((\bR_T \times \Xi^k)}.
\end{multline*}

Now we state the main results of the paper. Our first result is about the Dirichlet problem in the domain $\bR_T\times \Xi^k$.

\begin{theorem}[The Dirichlet problem]
							\label{thm01}
Let $p \in (1,2]$, $T \in (-\infty,\infty]$, $\lambda \ge 0$, $k \in [1,d]$ be an integer, and $f\in L_p(\bR_T \times \Xi^k)$.
Assume that $a^{ij}$ satisfy \eqref{eq5.21} and \eqref{eq5.22}.
If $u \in W_p^{1,2}(\bR_T \times \Xi^k)$ satisfies $u = 0$ on $\bR_T \times \partial \Xi^k$ and
\begin{equation}
								\label{eq01}
- u_t + a^{ij} D_{ij}u - \lambda u = f
\end{equation}
in $\bR_T \times \Xi^k$,
then we have
\begin{multline}
							\label{eq1201}
\lambda \|u\|_{L_p(\bR_T \times \Xi^k)} + \lambda^{1/2}\|Du\|_{L_p((\bR_T \times \Xi^k)} + \|D^2u\|_{L_p(\bR_T \times \Xi^k)}
+ \|u_t\|_{L_p(\bR_T \times \Xi^k)}
\\
\le N \|f\|_{L_p(\bR_T \times \Xi^k)},
\end{multline}
where $N=N(d,\delta,p)>0$.
Moreover, for any $f \in L_p(\bR_T \times \Xi^k)$ and $\lambda > 0$, there exists a unique $u \in W_p^{1,2}(\bR_T \times \Xi^k)$ satisfying \eqref{eq01} with the Dirichlet boundary condition $u=0$ on $\bR_T \times \partial \Xi^k$.
\end{theorem}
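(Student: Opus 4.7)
The plan is to derive the estimate \eqref{eq1201} by bootstrapping from the $p=2$ case, exploiting the fact that when the coefficients depend only on $(t,x_d)$ (with $a^{dd}$ depending on just one variable) the operator is essentially in divergence form, so the Jensen--Lieberman trick mentioned in the introduction applies: a tangential first derivative $v = D_\beta u$ with $\beta<d$ satisfies a divergence form parabolic equation with mixed Dirichlet/conormal boundary data on $\partial\Xi^k$, and then the divergence form $L_p$-theory developed in Section \ref{sec3} closes the estimate.

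I would begin by settling $p=2$ as an immediate specialization of the $L_2$-estimate proved in Section \ref{secNonDiv2} (which allows more general boundary data). For $p\in(1,2)$, the key identity is
$$
a^{ij}D_{ij} u = D_j\bigl(a^{ij}D_i u\bigr) - \sum_{i=1}^{d-1}(D_d a^{id})D_i u,
$$
valid because every spatial derivative $D_ja^{ij}$ vanishes except (possibly) $D_d a^{id}$. Fix $\beta\in\{1,\dots,d-1\}$ and set $v=D_\beta u$. Since the coefficients are independent of $x_\beta$, differentiating \eqref{eq01} in $x_\beta$ and recasting in divergence form yields an equation of the type
$$
-v_t + D_j\bigl(a^{ij}D_i v\bigr) - \sum_{i=1}^{d-1}(D_d a^{id})D_i v - \lambda v = D_j(f\,\delta_{j\beta}),
$$
whose right-hand side is a distributional divergence of an $L_p$ function. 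The Dirichlet condition $u=0$ on $\partial\Xi^k$ gives $v=0$ on each face $\Gamma^{k,i}$ with $i\ne\beta$ (tangential vanishing), while on $\Gamma^{k,\beta}$ (if $\beta\le k$) the identity $u=0$ together with the original equation translates into a conormal derivative boundary condition for $v$. When $a^{dd}$ depends on $x_d$ rather than $t$, a one-dimensional change of variables in $x_d$ first normalizes $a^{dd}$ so that the same reduction applies.

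Next, I would invoke the divergence form $L_p$-estimates of Section \ref{sec3} (which combine De Giorgi--Nash--Moser interior and boundary $C^\alpha$-estimates with Krylov's mean-oscillation $L_p$-machinery) to conclude $\|Dv\|_{L_p}=\|DD_\beta u\|_{L_p}\le N\|f\|_{L_p}$ for every $\beta<d$. This controls all second derivatives except $D_{dd}u$. To recover $D_{dd}u$ and $u_t$, I would isolate the corresponding terms in \eqref{eq01}, use ellipticity ($a^{dd}\ge\delta$) together with a one-dimensional $L_p$-estimate in the $(t,x_d)$-variables, and handle the two alternatives in \eqref{eq5.22} separately; the lower-order terms $\lambda\|u\|_{L_p}$ and $\lambda^{1/2}\|Du\|_{L_p}$ then follow from the usual parabolic scaling/interpolation. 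Solvability is obtained from the a priori bound via the method of continuity in $\lambda$ together with a smooth-coefficient approximation.

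The hardest step is the reduction sketched above: correctly interpreting the divergence form equation and the conormal boundary condition for $v=D_\beta u$ across the non-smooth edges of the polytope $\Xi^k$, and matching it to the divergence form $L_p$-theory of Section \ref{sec3}. This is precisely where the restriction $p\le 2$ enters: the mixed Dirichlet/conormal divergence form problem on a convex polytope loses $L_p$-solvability for $p>2$, in line with the classical obstruction \cite[Theorem 4.3.2.4]{Gr85} for Laplace equations on polygons.
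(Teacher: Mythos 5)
Your overall architecture is the paper's: settle $p=2$ via Section \ref{secNonDiv2}, then for $p\in(1,2)$ differentiate tangentially, identify the equation satisfied by $v=D_\beta u$ as a divergence form problem with mixed Dirichlet/conormal data, and close with the $\cH^1_p$-theory of Section \ref{sec3}, recovering $D_{dd}u$ and $u_t$ from the equation at the end. However, there is a genuine gap in your ``key identity.'' You write
$a^{ij}D_{ij}u = D_j(a^{ij}D_iu)-\sum_{i}(D_d a^{id})D_iu$, but the coefficients $a^{id}$, $i<d$, are merely \emph{measurable} in $x_d$, so $D_d a^{id}$ does not exist as a function and the lower-order terms $(D_d a^{id})D_iv$ in your equation for $v$ are meaningless (and would not be uniformly controlled under mollification). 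The whole point of the reduction is to arrange the divergence form so that \emph{no} outer derivative ever falls on a coefficient in the $x_d$-direction: the cross terms must be grouped as $D_i\bigl((a^{id}+a^{di})D_du\bigr)$ with the outer derivative in $x_i$, $i<d$ (where the coefficients are constant), and the pure $a^{dd}D_{dd}u$ term is converted exactly into $D_d(\tilde a^{dd}D_d\cdot)$ by the change of variables $y_d=\int_0^{x_d}(a^{dd})^{-1}$, which you mention but apply only to normalize $a^{dd}$, not to fix the off-diagonal terms.

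A second, related defect: even granting the identity formally, the operator $v\mapsto D_j(a^{ij}D_iv)$ you produce is not in the class $\bL_1$ (nor $\bL_2$), whereas every statement you need from Section \ref{sec3} --- Lemma \ref{lem01} (which gives the conormal condition for $D_1u$ and is explicitly flagged as the one place where $\cL\in\bL_1$ is used), Corollary \ref{cor4.11} (the duality step that yields the full-gradient $L_p$-bound for $p<2$ from the special right-hand side $D_1f$), and Proposition \ref{prop5.8} --- requires membership in $\bL_1$ or $\bL_2$. The paper's construction achieves this by the further symmetrization $\tilde a^{j1}=0$, $\tilde a^{1j}=a^{1j}+a^{j1}$ ($j\ge2$), $\tilde a^{dj}=0$, $\tilde a^{jd}=(a^{dj}+a^{jd})/a^{dd}$, so that $\tilde\cL\in\bL_1$ and $-u_t+\tilde\cL u-\lambda u=f$ holds exactly with no differentiated coefficients. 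Without this regrouping your reduction does not match the hypotheses of the lemmas you invoke, and the estimate $\|DD_\beta u\|_{L_p}\le N\|f\|_{L_p}$ does not follow. The remaining steps (controlling $u_t$, $D_{dd}u$, $\lambda u$ from the equation and a one-variable estimate, plus the method of continuity) are fine and essentially the paper's, modulo the reduction to $k\le d-1$ via Remark \ref{rem2.2} needed so that $x_d$ ranges over all of $\bR$.
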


\begin{remark}
                                \label{rem2.2}
By using the odd/even extensions with respect to $x_d$, without loss of generality, in Theorem \ref{thm01} we may assume that $k<d$. Indeed, if $k=d$, we set $\bar{u}$ and $\bar{f}$ to be the odd extensions of $u$ and $f$ with respect to $x_d$, respectively. Then we have
$$
\|\bar{u}\|_{W_p^{1,2}(\bR_T \times \Xi^{d-1})} \cong \|u\|_{W_p^{1,2}(\bR_T \times \Xi^d)},
\quad
\|\bar f\|_{L_p(\bR_T \times \Xi^{d-1})} \cong \|f\|_{L_p(\bR_T \times \Xi^d)}.
$$
Moreover, $\bar{u} \in W_p^{1,2}(\bR_T \times\Xi^{d-1})$ satisfies
\begin{equation*}
-\bar{u}_t + \bar a^{ij} D_{ij} \bar{u} - \lambda \bar{u} = \bar{f}
\end{equation*}
in $\bR_T \times \Xi^{d-1}$ with the Dirichlet boundary condition on $\bR_T \times \partial \Xi^{d-1}$, where $\bar a^{dj}$ and $\bar a^{jd}$, $j=1,\ldots,d-1$,  are the odd extensions of $a^{dj}$ and $a^{jd}$ with respect to $x_d$, and  $\bar a^{ij}$  are the even extensions of $a^{ij}$ for all the other indices $(i,j)$. It is clear that $\bar a^{ij}$ satisfy \eqref{eq5.21} and \eqref{eq5.22}. Therefore, the case $k=d$ follows from the case $k=d-1$.
\end{remark}

The second result is regarding the Neumann problem in a half space.

\begin{theorem}[The Neumann problem]
							\label{thm02}
Let $p \in [2,\infty)$, $T \in (-\infty,\infty]$, $\lambda \ge 0$, and $f\in L_p(\bR_T \times \bR^d_+)$.
Assume that $a^{ij}$ satisfy \eqref{eq5.21} and \eqref{eq5.22}.
If $u \in W_p^{1,2}(\bR_T \times \bR^d_+)$ satisfies $D_1u = 0$ on $\bR_T \times \partial \bR^d_+$ and
\begin{equation}
								\label{eq1203}
- u_t + a^{ij} D_{ij}u - \lambda u = f
\end{equation}
in $\bR_T \times \bR^d_+$,
then we have
\begin{multline*}
\lambda \|u\|_{L_p(\bR_T \times \bR^d_+)} + \lambda^{1/2}\|Du\|_{L_p(\bR_T \times \bR^d_+)} + \|D^2u\|_{L_p(\bR_T \times \bR^d_+)}
+ \|u_t\|_{L_p(\bR_T \times \bR^d_+)}
\\
\le N \|f\|_{L_p(\bR_T \times \bR^d_+)},
\end{multline*}
where $N=N(d,\delta,p)>0$.
Moreover, for any $f \in L_p(\bR_T \times \bR^d_+)$ and $\lambda > 0$, there exists a unique $u \in W_p^{1,2}(\bR_T \times \bR^d_+)$ satisfying \eqref{eq1203} with the Neumann boundary condition $D_1u=0$ on $\bR_T \times \partial \bR^d_+$.
\end{theorem}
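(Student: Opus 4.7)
The plan is to reduce to the divergence-form $L_p$-theory developed in Section \ref{sec3}, exploiting the observation (emphasized in the introduction) that, under the Neumann condition $D_1 u = 0$, the normal derivative $v := D_1 u$ satisfies a divergence-form parabolic equation with zero Dirichlet boundary data. By the method of continuity it suffices to establish the displayed a priori estimate for $u \in W^{1,2}_p(\bR_T \times \bR^d_+)$; existence then follows by joining the given coefficients to $a^{ij} = \delta^{ij}$ (classical by Fourier/reflection) along this estimate.

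Since \eqref{eq5.22} forces all $a^{ij}$ to be independent of $x_1$, differentiating \eqref{eq1203} in $x_1$ yields
\begin{equation*}
-v_t + a^{ij} D_{ij} v - \lambda v = D_1 f
\end{equation*}
on $\bR_T \times \bR^d_+$, with Dirichlet data $v = 0$ on the boundary. I would then recast this in divergence form using the coefficient structure: for $i \ne d$ one has $a^{ij} D_{ij} v = D_i(a^{ij} D_j v)$; for $i = d$, $j \ne d$, $a^{dj} D_{dj} v = D_j(a^{dj} D_d v)$; and the $(d,d)$ term becomes $D_d(a^{dd} D_d v)$ when $a^{dd} = a^{dd}(t)$, while remaining in non-divergence form when $a^{dd} = a^{dd}(x_d)$ (the \emph{mixed} type covered in Section \ref{sec3}). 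The right-hand side $D_1 f = D_i F^i$ with $F^1 = f$ and $F^i = 0$ for $i > 1$ is already in divergence form. Applying the Section \ref{sec3} $L_p$-estimate for such Dirichlet problems, valid for $p \in [2,\infty)$, gives
\begin{equation*}
\|Dv\|_{L_p(\bR_T \times \bR^d_+)} + \sqrt{\lambda}\,\|v\|_{L_p(\bR_T \times \bR^d_+)} \le N\|f\|_{L_p(\bR_T \times \bR^d_+)},
\end{equation*}
which controls $\|D D_1 u\|_{L_p}$ together with $\sqrt{\lambda}\|D_1 u\|_{L_p}$, i.e., every second derivative of $u$ that involves a differentiation in $x_1$.

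The main obstacle will then be recovering the purely tangential second derivatives $D_{ij} u$ with $i, j \in \{2,\ldots,d\}$, the time derivative $u_t$, and the remaining lower-order norms. My strategy to close the estimate is to combine \eqref{eq1203} itself (uniform ellipticity lets one solve for $a^{dd} D_{dd} u$ in terms of $u_t + \lambda u + f$ minus the already controlled mixed second derivatives) with the translation invariance of the coefficients in $x_1,\ldots,x_{d-1}$: a partial Fourier analysis or a tangential-difference-quotient argument in these variables reduces the remaining estimate to a one-dimensional problem in $(t, x_d)$, which is again accessible by the Section \ref{sec3} machinery. Uniqueness follows by applying the a priori estimate to the difference of two solutions.
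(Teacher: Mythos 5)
Your reduction of the mixed derivatives $DD_1u$ is essentially the paper's argument: since the $a^{ij}$ do not depend on $x_1$, the normal derivative $v=D_1u$ solves a Dirichlet problem for a divergence-form operator arranged so that $\tilde a^{d1}=0$, and Proposition \ref{prop5.7} (valid for $p\in[2,\infty)$ precisely under that structural condition) yields $\sqrt{\lambda}\|D_1u\|_{L_p}+\|DD_1u\|_{L_p}\le N\|f\|_{L_p}$. Two caveats even here: when $a^{dd}=a^{dd}(x_d)$ the term $a^{dd}D_{dd}v$ cannot be left ``in non-divergence form'' --- Section \ref{sec3} treats only divergence-form operators, there is no ``mixed type'' there --- and the paper removes this obstruction by the change of variables $y_d=\int_0^{x_d}(a^{dd}(s))^{-1}\,ds$, which you would need as well; moreover Proposition \ref{prop5.7} is an a priori estimate for smooth data, so one must justify that $D_1u$ indeed lies in $\cH^1_p$ (the paper prepares this through the mollification arguments of Lemmas \ref{lem02} and \ref{lem01}).

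The genuine gap is in your second step. After controlling $DD_1u$ you still need $u_t$, $\lambda\|u\|_{L_p}$, and all of $D_{ij}u$ with $i,j\ge 2$ (not only the purely tangential ones: $D_{id}u$ for $2\le i\le d-1$ and $D_{dd}u$ are also uncontrolled, and the equation supplies only the single combination $a^{dd}D_{dd}u$ in terms of the rest). Your proposed route --- partial Fourier analysis or tangential difference quotients reducing to a one-dimensional problem ``accessible by the Section \ref{sec3} machinery'' --- does not close: Section \ref{sec3} provides only first-derivative ($\cH^1_p$) estimates for divergence-form operators, never $u_t$ or second derivatives, and no Fourier diagonalization is available because the coefficients are merely measurable in $t$ and $x_d$ and couple $D_d$ with every tangential direction through $a^{dj}$. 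Tangential difference quotients merely show that $D_iu$ solves the same Neumann problem with data $D_if$, which is circular. What the paper actually does is move all terms $a^{ij}D_{ij}u$ with $i=1$ or $j=1$ to the right-hand side, extend the resulting equation evenly across $x_1=0$ to $\bR\times\bR^d$, and invoke the whole-space $W^{1,2}_p$-theory for non-divergence equations with coefficients measurable in $(t,x_d)$ (Theorems 2.2 (iii) and 2.3 (iii) of \cite{Dong12}); this gives \eqref{eq4.15} with $k=1$, i.e., all remaining norms are bounded by $N\|f\|_{L_p}+N\|DD_1u\|_{L_p}$, which combined with your first step finishes the proof. Without that whole-space input --- a nontrivial theorem in its own right --- your argument cannot be completed.
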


\begin{remark}
Although in Theorems \ref{thm01} and \ref{thm02} we only consider equations without lower-order terms, it is worth noting that by following the proof of Corollary \ref{cor0411_1} the theorems can be extended to general linear equations with lower order terms:
$$
-u_t + a^{ij} D_{ij} u + b^i D_i u + c u - \lambda u = f
$$
as long as $b^i$ and $c$ are bounded measurable functions.
In particular, we obtain the a priori estimates in Theorems \ref{thm01} and \ref{thm02} for $\lambda \ge \lambda_0$, where $\lambda_0$ is determined by $d$, $\delta$, and the bound of the coefficients of $b^i$ and $c$.

If we consider equations as above, for example, in $[0,T] \times \Xi^k$ with appropriate initial conditions, then we are able to remove the term $\lambda u$ by considering $u e^{-\lambda t}$.
In this case the constant $N$ depends on $T$ as well. See, for instance, the proof of Theorem 2.1 in \cite{Kr07}.
\end{remark}

As an application of Theorem \ref{thm01}, by using a change of variables and the partition of unity argument, and following the steps in Chapter 11 of
\cite{Kr08}, we derive the corresponding $W^{1,2}_p$-estimate and solvability of parabolic equations of the form
$$
-u_t + a^{ij} D_{ij} u + b^i D_i u + c u - \lambda u = f
$$
in $\bR_T\times \Omega$, where $\Omega$ is a general simple convex polytope, $\lambda>0$ is a constant, and $a^{ij}=a^{ij}(t)$ are measurable functions in the time variable. If, in addition, $\Omega$ is bounded and $c\ge 0$, we can take $\lambda$ to be zero. See, for instance, Chapter 11 of \cite{Kr08}.

\section{Non-divergence type equations when $p=2$}
							\label{secNonDiv2}

In this section we prove Theorem \ref{thm01} when $p=2$ with more general boundary conditions.

\begin{theorem}
							\label{thm0411}
Let $T \in (-\infty,\infty]$, $\lambda \ge 0$, $k \in [1,d]$ be an integer, and $f\in L_2(\bR_T \times \Xi^k)$.
Assume that $a^{ij}$ satisfy \eqref{eq5.21} and \eqref{eq5.22}.
If $u \in W_2^{1,2}(\bR_T \times \Xi^k)$ satisfies
\begin{equation}
								\label{eq0410-1}
- u_t + a^{ij} D_{ij}u - \lambda u = f
\end{equation}
in $\bR_T \times \Xi^k$ with either the Dirichlet boundary condition $u = 0$ or the Neumann boundary condition $D_1 u=0$ on $\bR_T \times \Gamma^{k,1}$, and the Dirichlet boundary condition $u = 0$ on the other faces $\bR_T \times \Gamma^{k,i}$, $i=2,\ldots,k$ (when $k\ge 2$),
then we have
\begin{multline}
							\label{eq0410-2}
\lambda \|u\|_{L_2(\bR_T \times \Xi^k)} + \lambda^{1/2}\|Du\|_{L_2(\bR_T \times \Xi^k)} + \|D^2u\|_{L_2(\bR_T \times \Xi^k)}
+ \|u_t\|_{L_2(\bR_T \times \Xi^k)}
\\
\le N \|f\|_{L_2(\bR_T \times \Xi^k)},
\end{multline}
where $N=N(d,\delta)>0$.
Moreover, for any $f \in L_2(\bR_T \times \Xi^k)$ and $\lambda > 0$, there exists a unique $u \in W_2^{1,2}(\bR_T \times \Xi^k)$ satisfying \eqref{eq0410-1} with the boundary conditions described above.
\end{theorem}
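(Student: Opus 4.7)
The plan is to establish the a priori estimate \eqref{eq0410-2} first; uniqueness follows immediately from it, and existence then follows by the method of continuity. For the continuity step one uses that the heat operator $L_0 = -\partial_t + \Delta - \lambda$ is solvable in $W^{1,2}_2(\bR_T \times \Xi^k)$ under the stated boundary conditions (standard on convex wedges via Galerkin approximation), together with the fact that \eqref{eq0410-2} holds uniformly for the convex homotopy $L_\tau = -\partial_t + ((1-\tau)\delta^{ij} + \tau a^{ij})D_{ij} - \lambda$, $\tau \in [0,1]$, since only \eqref{eq5.21}--\eqref{eq5.22} enter the derivation of the estimate.

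For the a priori estimate the plan is to work by partial Fourier decomposition in the tangential variables. By Remark \ref{rem2.2} we may assume $k < d$, so that $x_d$ is a full-line variable; the crucial structural feature is that $a^{ij}$ does not depend on any of $x_1, \ldots, x_{d-1}$, so tangential differentiation commutes with the leading operator and tangential integration by parts is clean. Apply the Fourier sine transform in each $x_l$ carrying a Dirichlet condition (i.e.\ $l = 2, \ldots, k$, and $l = 1$ in the pure-Dirichlet case), the Fourier cosine transform in $x_1$ in the Neumann case, and the standard Fourier transform in $x_{k+1}, \ldots, x_{d-1}$. The boundary conditions on $\partial \Xi^k$ translate into the correct parity of the transform, and for each tangential frequency $\xi$ the transformed equation becomes a one-dimensional parabolic equation on $\bR_T \times \bR$ of the form
\[
-v_t + a^{dd}(t,x_d) D_d^2 v + b(t,x_d;\xi) D_d v - (c_0(t,x_d;\xi) + \lambda) v = g,
\]
with $c_0(t,x_d;\xi) \ge \delta |\xi|^2$ and $|b(t,x_d;\xi)| \le N |\xi|$ uniformly.

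For this 1D equation the hypothesis that $a^{dd}$ depends on only one of $t, x_d$ is decisive: a further Fourier transform in the remaining variable reduces the PDE to an explicitly solvable first-order linear ODE, and elementary estimates for the ODE give, uniformly in $\xi$ and $\lambda \ge 0$,
\[
\|v_t\|_{L_2}^2 + \|D_d^2 v\|_{L_2}^2 + (|\xi|^2 + \lambda)\|D_d v\|_{L_2}^2 + (|\xi|^2 + \lambda)^2 \|v\|_{L_2}^2 \le N \|g\|_{L_2}^2,
\]
with $N = N(d,\delta)$. Plancherel in the tangential variables then recovers \eqref{eq0410-2}. The hard part is the tangential reduction itself: the mixed cross-derivative terms $a^{ij} D_{ij} u$ with $i \neq j$ and $i,j < d$ naively send sine channels to cosine channels (and vice versa) under the partial transforms, so the reduction to a genuinely parameter-dependent 1D problem (as opposed to a nonlocal one) requires a careful organization of the transforms and of the resulting boundary integration by parts. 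The simple-convex-polytope structure of $\Xi^k$ and the restriction that Neumann conditions are imposed on at most one face $\Gamma^{k,1}$ are essential here, ensuring that all codimension-$\ge 2$ boundary contributions cancel and that the sine/cosine parity choices on different faces are compatible.
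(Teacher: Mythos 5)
Your outline of the method of continuity is fine, but the core of the a priori estimate has a genuine gap that you yourself flag and then do not close. The assertion that the tangential sine/cosine transforms turn the equation into a \emph{scalar} one-dimensional parabolic equation in $(t,x_d)$ for each frequency $\xi$ is false whenever there are cross terms $a^{ij}D_{ij}u$ with $i\ne j$ and at least one of $i,j\le k$ (and \eqref{eq5.22} allows all such $a^{ij}$ to be nonzero functions of $(t,x_d)$). A single tangential derivative $D_i$ maps the sine channel in $x_i$ to the cosine channel and vice versa, so at a fixed frequency the transformed problem is a coupled system of $2^k$ parity channels with additional boundary terms, not a parameter-dependent scalar equation; there is no evident ellipticity/parabolicity structure for that system, and "careful organization of the transforms" is not an argument. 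A second, independent problem is the claim that the reduced 1D equation is "explicitly solvable by a further Fourier transform": the drift coefficient $b(t,x_d;\xi)$ is built from $a^{jd}(t,x_d)+a^{dj}(t,x_d)$ and genuinely depends on both $t$ and $x_d$, so no further transform yields a constant-coefficient ODE. The uniform-in-$\xi$ estimate you write down is essentially the whole-space $L_2$ result for coefficients measurable in $(t,x_d)$ with $a^{dd}$ depending on only one of them (\cite[Theorem 3.2]{KK07}), which is a nontrivial theorem, not an elementary computation.

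For comparison, the paper avoids the frequency decomposition entirely. It first controls all $D_{ij}u$ with $(i,j)\ne(d,d)$ and $\sqrt{\lambda}\,D_iu$, $i\le d-1$, by multiplying the equation by $D_i^2u$ and integrating by parts tangentially; the boundary conditions make the boundary terms vanish (this is exactly where the restriction to at most one Neumann face enters), and in the case $a^{dd}=a^{dd}(x_d)$ a preliminary change of the $x_d$-variable puts $a^{dd}D_{dd}$ into divergence form so that the same integration by parts goes through. With those mixed derivatives in hand, the cross terms are moved to the right-hand side and the remaining diagonal equation is extended by odd/even reflections to $\bR_T\times\bR^d$, where \cite[Theorem 3.2]{KK07} applies. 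If you want to salvage a transform-based proof you would need, at a minimum, a substitute for that first energy step to decouple or control the cross-derivative channels; as written, the proposal does not prove \eqref{eq0410-2}.
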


It is an interesting question whether the estimate above still holds when $2\le k<d$ and $u$ satisfies the Neumann boundary condition on at least two faces.

\begin{proof}[Proof of Theorem \ref{thm0411}]
We first observe that the statements in the theorem hold true if $a^{ij} = \delta_{ij}$. This can be justified by extending the equation to the domain $\bR_T \times \bR^d$. A similar extension process is explained in the derivation of \eqref{eq0425_1} below.
Thus, thanks to the method of continuity, it is enough to prove the a priori estimate \eqref{eq0410-2}.  Without loss of generality, we assume that $u \in C^\infty(\overline{\bR_T \times \Xi^k}) \cap W_2^{1,2}(\bR_T \times \Xi^k)$ and $a^{ij}$ are infinitely differentiable with bounded derivatives. Moreover, because of Remark \ref{rem2.2}, we may assume that $k\le d-1$.

We first claim that
\begin{equation}
							\label{eq0411-2}
\sum_{\substack{i,j=1\\ (i,j) \ne (d,d)}}^d \|D_{ij} u\|_{L_2(\bR_T \times \Xi^k)} + \sum_{i=1}^{d-1} \sqrt{\lambda} \|D_i u\|_{L_2(\bR_T \times \Xi^k)} \le N \|f\|_{L_2(\bR_T \times \Xi^k)},
\end{equation}
where $N=N(d,\delta)$.
For the proof, we split into two cases: $a^{dd}= a^{dd}(t)$ and $a^{dd}=a^{dd}(x_d)$.

{\em (i) The case $a^{dd}= a^{dd}(t)$}:
We first consider the case when $u=0$ on $\bR_T\times \Gamma^{k,1}$.
Note that
\begin{align}
                                \label{eq2.38}
\int_{\bR_T \times \Xi^k} a^{dd} D^2_d u \, D_1^2 u \, dx \, dt
&= - \int_{\bR_T \times \Xi^k} a^{dd} D_1D^2_d u \, D_1 u \, dx \, dt\nonumber\\
&=  \int_{\bR_T \times \Xi^k} a^{dd} D_{1d} u \, D_{1d} u \, dx \, dt,
\end{align}
since $D_d^2 u=0$ on $\bR_T\times \Gamma^{k,1}$.
For $2\le i\le d-1$ and $2\le j\le d$,
\begin{align}
                                \label{eq2.38b}
\int_{\bR_T \times \Xi^k} a^{ij} D_{ij} u \, D_1^2 u \, dx \, dt
&= - \int_{\bR_T \times \Xi^k} a^{ij} D_{1ij} u \, D_1 u \, dx \, dt\nonumber\\
&=  \int_{\bR_T \times \Xi^k} a^{ij} D_{1j} u \, D_{1i} u \, dx \, dt,
\end{align}
where in the first equality we used $D_{ij}u=0$ on $\bR_T\times \Gamma^{k,1}$, and in the second equality we integrated by parts in $x_i$ and used $D_1 u=0$ on $\bR_T\times \Gamma^{k,i}$. Of course, \eqref{eq2.38b} still holds if $2\le i\le d$ and $2\le j\le d-1$, in which case we integrate by parts in $x_j$ in the second equality.
If at least one of $i$ and $j$ equals $1$, clearly we still have
\begin{equation}
                                        \label{eq3.04}
\int_{\bR_T \times \Xi^k} a^{ij} D_{ij} u \, D_1^2 u \, dx \, dt=\int_{\bR_T \times \Xi^k} a^{ij} D_{1j} u \, D_{1i} u \, dx \, dt
\end{equation}
without performing any integration by parts.
Thus by multiplying both sides of the equation \eqref{eq0410-1} by $D_1^2 u$, and integrating by parts, whenever necessary, we see that
\begin{align*}
&\int_{\bR_T \times \Xi^k} - u_t \, D_1^2 u \, dx \, dt
+ \int_{\bR_T \times \Xi^k} a^{ij} D_{ij} u \, D_1^2 u \, dx \, dt
- \lambda \int_{\bR_T \times \Xi^k} u \, D_1^2 u \, dx \, dt\\
&= \frac 1 2 \int_{\bR_T \times \Xi^k} \frac{\partial}{\partial t} \left( D_1 u \right)^2 \, dx \, dt
+ \int_{\bR_T \times \Xi^k} a^{ij} D_{1i} u \, D_{1j} u \, dx \, dt
+ \lambda \int_{\bR_T \times \Xi^k} |D_1 u|^2 \, dx \, dt\\
&= \int_{\bR_T \times \Xi^k} f D_1^2 u \, dx \, dt,
\end{align*}
which together with \eqref{eq5.21} and H\"{o}lder's inequality implies that
\begin{equation}
							\label{eq0411-1}
\|DD_1 u\|_{L_2(\bR_T \times \Xi^k)} + \sqrt{\lambda} \|D_1u\|_{L_2(\bR_T \times \Xi^k)} \le N(d,\delta) \|f\|_{L_2(\bR_T \times \Xi^k)}.
\end{equation}
By symmetry, for $i = 2, \ldots, k$, we have \eqref{eq0411-1} with $DD_i u$
and $D_iu$ in places of $DD_1u$ and $D_1u$, respectively. The same estimate holds for $i=k+1,\ldots,d-1$ by similar and actually simpler reasoning.
Therefore, we arrive at \eqref{eq0411-2}.

Now we treat the case when $u$ satisfies the Neumann boundary condition on $\bR_T\times \Gamma^{k,1}$. Since $D_1u=0$ on $\bR_T \times \Gamma^{k,1}$ and $\bR_T\times \Gamma^{k,i},i=2,\ldots,k$, using integration by parts as above, we obtain the equalities \eqref{eq2.38}, \eqref{eq2.38b}, and \eqref{eq3.04}, which imply \eqref{eq0411-1}. We then use the idea in Remark \ref{rem2.2} to reduce the number of the faces by $1$. Rewrite the equation \eqref{eq0410-1} as
$$
-u_t + a^{11}D_{11}u+\sum_{i,j=2}^d a^{ij} D_{ij} u - \lambda u = f - \sum_{i=2}^d (a^{i1}+a^{1i}) D_{1i}u :=F.
$$
Let $\bar{u}$ and $\bar{F}$ be the even extensions of $u$ and $F$ with respect to $x_1$ to the domain $\bR_T \times \Omega$, where
\begin{equation}
							\label{eq0424_3}
\Omega:= \bR \times \Xi^{k-1}_{d-1}:= \{ x \in \bR^d : x_2, \ldots, x_k \in \bR^+\}.
\end{equation}
It is clear that the coefficients $a^{11}$ and $a^{ij}$, $i,j \ge 2$, satisfy \eqref{eq5.21} and \eqref{eq5.22}.
Moreover, $\bar{u} \in W_2^{1,2}(\bR_T \times \Omega)$ satisfies
\begin{equation*}
-\bar{u}_t +    a^{11}D_{11} \bar u+\sum_{i,j=2}^d a^{ij} D_{ij} \bar u - \lambda \bar{u} = \bar{F}
\end{equation*}
in $\bR_T \times \Omega$ with the Dirichlet boundary condition $u = 0$ on $\bR_T \times \Gamma^{k,i}$, $i=2,\ldots,k$ (when $k\ge 2$). Rearrange the coordinates
$$
(x_1, x_2, \ldots, x_d) \to (x_2, \ldots, x_k, x_1, x_{k+1}, \ldots, x_d).
$$
Then
from the estimate \eqref{eq0411-2} for the Dirichlet boundary condition case above, we have
$$
\sum_{\substack{i,j=1\\ (i,j) \ne (d,d)}}^d \|D_{ij} \bar u\|_{L_2(\bR_T \times \Omega)} + \sum_{i=1}^{d-1} \sqrt{\lambda} \|D_i u\|_{L_2(\bR_T \times \Omega)} \le N \|F\|_{L_2(\bR_T \times \Omega)},
$$
which together with the definition of $\bar u$ and $\bar F$ as well as \eqref{eq0411-1} implies \eqref{eq0411-2} in this case.

{\em (ii) The case $a^{dd}=a^{dd}(x_d)$}: We make a change of variables to exploit the divergence structure of the equation. Let
$$
y_d := \phi(x_d) = \int_0^{x_d} \frac{1}{a^{dd}(s)} \, ds,
\quad
y_j := x_j,
\quad
j =1, \ldots, d-1.
$$
Denote $y' = (y_1,\ldots, y_{d-1})$ and set
$$
\tilde u(t,y) = u\left(t,y', \phi^{-1}(y_d)\right),
\quad
\tilde f(t,y) = f\left(t,y', \phi^{-1}(y_d)\right),
$$
$$
\tilde{a}^{dd}(y_d) = 1/a^{dd}(\phi^{-1}(y_d)),
$$
$$
\tilde{a}^{jd}(t,y_d) = \frac{a^{d j}+a^{j d}}{a^{dd}}\left(t,\phi^{-1}(y_d)\right),\quad \tilde{a}^{dj} = 0,
\quad j =1, \ldots, d-1,
$$
and
$\tilde{a}^{ij}(t,y_d) = a^{ij}(t,\phi^{-1}(y_d))$ for the other $(i,j)$.
Then we see that $\tilde u$ satisfies
$$
-\tilde{u}_t + \sum_{\substack{i,j=1\\ (i,j) \ne (d,d)}}^d \tilde{a}^{ij} D_{ij} \tilde u
+ D_d \left[ \tilde{a}^{dd} D_d \tilde{u} \right]
- \lambda \tilde{u} = \tilde{f}
$$
in $\bR_T \times \Xi^k$ with the same boundary conditions as $u$.
Note that
\begin{align*}
\int_{\bR_T \times \Xi^k} D_d \left[ \tilde{a}^{dd} D_d \tilde{u} \right] D_1^2 \tilde{u} \, dx \, dt
&= - \int_{\bR_T \times \Xi^k} D_d \left[ \tilde{a}^{dd} D_{1d} \tilde{u} \right] D_1 \tilde{u} \, dx \, dt\\
&=  \int_{\bR_T \times \Xi^k} \tilde{a}^{dd} D_{1d} \tilde{u} \, D_{1d} \tilde{u} \, dx \, dt
\end{align*}
if $u$ satisfies either $u=0$ or $D_1 u=0$ on $\bR_T\times\Gamma^{k,1}$.
Since the coefficients $\tilde{a}^{ij}$ satisfy \eqref{eq5.21} with an ellipticity constant depending only on $\delta$, by proceeding as above, we again obtain \eqref{eq0411-2}.

Now we prove \eqref{eq0410-2}. We again use the extension method. First we write the equation \eqref{eq0410-1} as
$$
-u_t + \sum_{i=1}^d a^{ii} D_{ii} u - \lambda u = f - \sum_{\substack{i,j=1\\ i \ne j}}^d a^{ij} D_{ij}u :=F.
$$
Set $\bar{u}$ and $\bar{F}$ to be the odd extensions of $u$ and $F$ with respect to $x_k$ if $u=0$ on $\bR_T\times \Gamma^{k,k}$.
In case $k = 1$ and $D_1 u = 0$ on $\bR_T \times \Gamma^{1,1}$, we set $\bar{u}$ and $\bar{F}$ to be the even extensions of $u$ and $F$ with respect to $x_1$. Then we have
$$
\|\bar{u}\|_{W_2^{1,2}(\bR_T \times \Xi^{k-1})} \cong \|u\|_{W_2^{1,2}(\bR_T \times \Xi^k)},
\quad
\|\bar F\|_{L_2(\bR_T \times \Xi^{k-1})} \cong \|F\|_{L_2(\bR_T \times \Xi^k)},
$$
and $\bar{u} \in W_2^{1,2}(\bR_T\times\Xi^{k-1})$ satisfies
\begin{equation}
							\label{eq0425_1}
-\bar{u}_t + \sum_{i=1}^d a^{ii} D_{ii} \bar{u} - \lambda \bar{u} = \bar{F}
\end{equation}
in $\bR_T\times \Xi^{k-1}$ with the same boundary conditions as $u$ on $\bR_T \times \Gamma^{k,i}$, $i=1, \ldots, k-1$.
We do not need any extensions of $a^{ij}$ because $a^{ij}$ are functions of only $t$ and $x_d$.
We repeat this extension process described above to consecutively extend an equation in $\bR_T\times \Xi^{k-i}$ to an equation in $\bR_T\times\Xi^{k-i-1}$, $i=0,\ldots,k-1$, until we get the equation \eqref{eq0425_1} in $\bR_T \times \bR^d$.
Now we observe that $a^{ii}$ are measurable functions of time and one spatial variable.
Then by the $L_2$-estimate for equations in the whole space obtained in \cite[Theorem 3.2]{KK07} together with the definition of $\bar{F}$, we obtain
\begin{multline*}
\lambda \|\bar{u}\|_{L_2(\bR_T \times \bR^d)} + \lambda^{1/2}\|D\bar{u}\|_{L_2(\bR_T \times \bR^d)} + \|D^2\bar{u}\|_{L_2(\bR_T \times\bR^d)}
+ \|\bar{u}_t\|_{L_2(\bR_T \times \bR^d)}\\
\le N(d,\delta) \|\bar{F}\|_{L_2(\bR_T \times \bR^d)}
\le N \sum_{i \ne j} \|D_{ij} u\|_{L_2(\bR_T \times \Xi^k)} + N \|f\|_{L_2(\bR_T \times \Xi^k)}.
\end{multline*}
Finally, we use the estimate \eqref{eq0411-2} and the comparability of the $L_2$ norms of functions in $\bR_T \times \Xi^k$ and those of their extensions in $\bR_T \times \bR^d$. The theorem is proved.
\end{proof}

\begin{corollary}
							\label{cor0411_1}
Let $T \in (-\infty,\infty]$, $k \in [1,d]$ be an integer, $b^i$ and $c$ be measurable functions bounded by a positive constant $K$, and $f \in L_2(\bR_T \times \Xi^k)$.
Assume that $a^{ij}$ satisfy \eqref{eq5.21} and \eqref{eq5.22}. Then there exists $\lambda_0=\lambda_0(d,\delta,K) \ge 0$ such that, for any $\lambda \ge \lambda_0$ and $u \in W_2^{1,2}(\bR_T \times \Xi^k)$ satisfying
\begin{equation}
							\label{eq0424_06}
-u_t + a^{ij}D_{ij}u + b^i D_i u + c u - \lambda u  = f
\end{equation}
in $\bR_T \times \Xi^k$ with the boundary conditions on $\bR_T \times \partial \Xi^k$ as stated in Theorem \ref{thm0411}, we have the a priori estimate \eqref{eq0410-2}.
Moreover, for any $f \in L_2(\bR_T \times \Xi^k)$ and $\lambda > \lambda_0$, there exists a unique $u \in W_2^{1,2}(\bR_T \times \Xi^k)$ satisfying \eqref{eq0424_06} with the boundary conditions as in Theorem \ref{thm0411}.
\end{corollary}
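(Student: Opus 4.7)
The plan is to reduce this corollary to Theorem \ref{thm0411} by treating the lower-order terms $b^i D_i u + c u$ as part of the right-hand side and absorbing them using the smallness afforded by large $\lambda$. Specifically, I would rewrite \eqref{eq0424_06} as
\[
-u_t + a^{ij} D_{ij} u - \lambda u = \tilde{f}, \qquad \tilde{f} := f - b^i D_i u - c u,
\]
and apply the a priori estimate \eqref{eq0410-2} from Theorem \ref{thm0411} to this identity, noting that $u$ satisfies the same boundary conditions and $\tilde{f}\in L_2(\bR_T\times \Xi^k)$ because $b^i$ and $c$ are bounded.

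Since $|b^i|, |c| \le K$, the triangle inequality gives
\[
\|\tilde f\|_{L_2(\bR_T\times\Xi^k)} \le \|f\|_{L_2(\bR_T\times\Xi^k)} + K \|Du\|_{L_2(\bR_T\times\Xi^k)} + K \|u\|_{L_2(\bR_T\times\Xi^k)}.
\]
The key point is that the left-hand side of \eqref{eq0410-2} controls $\lambda \|u\|_{L_2}$ and $\lambda^{1/2}\|Du\|_{L_2}$. Hence, once I choose $\lambda_0=\lambda_0(d,\delta,K)$ so that $NK \le \tfrac{1}{2}\lambda^{1/2}$ and $NK \le \tfrac{1}{2}\lambda$ for all $\lambda \ge \lambda_0$ (e.g. $\lambda_0 := \max(2NK, 4N^2K^2)$, where $N$ is the constant from Theorem \ref{thm0411}), the two lower-order contributions can be absorbed back into the left-hand side, yielding \eqref{eq0410-2} for solutions of \eqref{eq0424_06}.

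For the solvability assertion I would invoke the method of continuity on the family
\[
L_\tau u := -u_t + a^{ij} D_{ij} u + \tau\bigl(b^i D_i u + c u\bigr) - \lambda u, \qquad \tau \in [0,1],
\]
with the boundary conditions from Theorem \ref{thm0411}. The coefficients $\tau b^i$, $\tau c$ remain bounded by $K$ uniformly in $\tau$, so the absorption argument above yields the same a priori estimate uniformly in $\tau \in [0,1]$ whenever $\lambda \ge \lambda_0$. Since $L_0$ is solvable by Theorem \ref{thm0411}, the method of continuity gives unique solvability for $L_1$, and uniqueness at $\tau=1$ follows directly from the a priori estimate applied to the difference of two solutions.

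I do not expect a substantive obstacle: this is a routine absorption plus continuity argument once the sharp estimate of Theorem \ref{thm0411} is in hand. The only bookkeeping care needed is in picking $\lambda_0$ large enough that both $\|u\|_{L_2}$ and $\|Du\|_{L_2}$ are absorbed simultaneously, and in noting that the uniform-in-$\tau$ bound is automatic because the perturbation $\tau(b^iD_i + c)$ has the same pointwise bound $K$ as the original lower-order terms.
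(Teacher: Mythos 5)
Your proposal is correct and matches the paper's argument: the paper likewise moves $b^iD_iu+cu$ to the right-hand side, applies the a priori estimate of Theorem \ref{thm0411} with a constant independent of $\lambda$, absorbs the resulting $\|Du\|_{L_2}$ and $\|u\|_{L_2}$ terms for $\lambda\ge\lambda_0(d,\delta,K)$, and concludes solvability by the method of continuity.
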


\begin{proof}
Thanks to Theorem \ref{thm0411} and the method of continuity, it suffices to show \eqref{eq0410-2} for sufficiently large $\lambda$. To this end, we rewrite \eqref{eq0424_06} as
$$
-u_t + a^{ij}D_{ij}u  - \lambda u  = f-b^i D_i u - c u.
$$
By Theorem \ref{thm0411}, we have
\begin{multline*}
\lambda \|u\|_{L_2(\bR_T \times \Xi^k)} + \lambda^{1/2}\|Du\|_{L_2(\bR_T \times \Xi^k)} + \|D^2u\|_{L_2(\bR_T \times \Xi^k)}
+ \|u_t\|_{L_2(\bR_T \times \Xi^k)}
\\
\le N \|f\|_{L_2(\bR_T \times \Xi^k)}+N\|Du\|_{L_2(\bR_T \times \Xi^k)}+N\|u\|_{L_2(\bR_T \times \Xi^k)},
\end{multline*}
where the constant $N=N(d,\delta,K)$ is independent of $\lambda$.
This implies \eqref{eq0410-2} for $\lambda\ge \lambda_0$, where $\lambda_0=\lambda_0(d,\delta,K) \ge 0$ is sufficiently large. The corollary is proved.
\end{proof}

\section{Parabolic equations in divergence form}
                                        \label{sec3}

In this section, we consider divergence form parabolic operators of two special types.
Throughout the section, we set
\begin{equation}
							\label{eq1206}
\cL u = D_i (a^{ij} D_j u),
\end{equation}
where $a^{ij}$ satisfy \eqref{eq5.21} and \eqref{eq5.22}.
We denote $\cH_p^1$ to be the solution spaces for divergence form parabolic equations. Precisely,
$$
\cH^1_p((S,T) \times \Omega)=
\{u: u,Du \in L_p((S,T) \times \Omega),u_t \in \bH^{-1}_p((S,T) \times \Omega)\},
$$
where $\bH^{-1}_p((S,T) \times \Omega)$ is the space consisting of all generalized functions $v$ satisfying
$$
\inf \big\{\|f\|_{L_p((S,T) \times \Omega)}+\|g\|_{L_p((S,T) \times \Omega)}\,|\,v=\Div f+g\big\}<\infty.
$$

\subsection{Auxiliary results}
The following theorem is a simple consequence of the Lax--Milgram lemma.

\begin{theorem}
							\label{thm0411_1}
Let $\lambda > 0$, $T \in (-\infty,\infty]$, $k \in [1,d]$ be an integer, and $f = (f_1,\ldots,f_d), g \in L_2(\bR_T \times \Xi^k)$. Then there exists a unique $u \in \cH_2^1(\bR_T \times \Xi^k)$ satisfying
$$
-u_t + \cL u - \lambda u = \Div f + g
$$
in $\bR_T \times \Xi^k$ with either the Dirichlet boundary condition $u = 0$ or the conormal derivative boundary condition $a^{ij}D_j u=f_i$  on each $\bR_T \times \Gamma^{k,i}$, where $i=1,\ldots,k$.
Furthermore, we have
$$
\sqrt\lambda\|u\|_{L_2(\bR_T\times  \Xi^k)}+\|D u\|_{L_2(\bR_T \times  \Xi^k)}
\le N \|f\|_{L_2(\bR_T \times \Xi^k)} + N \lambda^{-1/2} \|g\|_{L_2(\bR_T \times \Xi^k)},
$$
where $N=N(d,\delta)$.
\end{theorem}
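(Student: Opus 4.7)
The plan is to recast the problem variationally and invoke the standard parabolic Lax--Milgram (Lions) theorem. Set $H = L_2(\Xi^k)$ and let $V \subset W^1_2(\Xi^k)$ denote the closed subspace of functions whose trace vanishes on the union of those $\Gamma^{k,i}$ on which the Dirichlet condition is imposed. Define the time-dependent bilinear form
$$B(t; u, v) = \int_{\Xi^k} a^{ij}(t,x_d)\, D_j u\, D_i v\, dx + \lambda \int_{\Xi^k} u v\, dx$$
on $V$. Integrating by parts in the spatial variables and using the prescribed boundary data --- on each conormal face the boundary contribution $(a^{ij}D_j u)\, n_i v$ cancels exactly against $f_i n_i v$ because $a^{ij}D_j u = f_i$ there, and on each Dirichlet face $v \in V$ vanishes --- one sees that $u \in \cH_2^1(\bR_T \times \Xi^k)$ with the correct Dirichlet trace solves the PDE iff
$$-\langle u_t, v \rangle + B(t; u, v) = -\int_{\Xi^k} f_i\, D_i v\, dx + \int_{\Xi^k} g v\, dx$$
for a.e.\ $t < T$ and every $v \in V$.

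Next I would verify the hypotheses of Lions' theorem. By \eqref{eq5.21}, $B(t;\cdot,\cdot)$ is bounded uniformly in $t$ and, thanks to $\lambda > 0$, coercive:
$$B(t; u, u) \ge \delta \|Du\|_{L_2(\Xi^k)}^2 + \lambda \|u\|_{L_2(\Xi^k)}^2.$$
The right-hand side of the equation defines $F(t) := -\Div f(t,\cdot) + g(t,\cdot) \in V^*$, and, using the $\lambda$-shifted $V$-norm $\|v\|_V^2 = \|Dv\|_H^2 + \lambda\|v\|_H^2$, we have $\|F\|_{L_2(\bR_T; V^*)} \le \|f\|_{L_2(\bR_T \times \Xi^k)} + \lambda^{-1/2}\|g\|_{L_2(\bR_T \times \Xi^k)}$. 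Applying the abstract existence/uniqueness theorem for parabolic equations in the Gelfand triple $V \hookrightarrow H \hookrightarrow V^*$ --- with terminal condition $u(T^-) = 0$ in $H$, equivalently the standard forward Cauchy problem after time reversal --- produces a unique $u \in L_2(\bR_T; V) \cap C(\overline{\bR_T}; H)$ with $u_t \in L_2(\bR_T; V^*)$, which sits in $\cH_2^1(\bR_T \times \Xi^k)$ with the prescribed boundary traces.

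For the a priori estimate I would test the weak equation against $v = u(t)$ and integrate in $t$. Functions in $\cH_2^1(\bR_T \times \Xi^k)$ satisfy $\|u(t)\|_H \to 0$ as $t \to -\infty$ (because $t \mapsto \|u(t)\|_H^2$ is absolutely continuous on $\bR_T$ and integrable there), so the time-derivative contribution reduces to $\tfrac12\|u(T)\|_H^2 \ge 0$, which we discard. Combining coercivity with Young's inequality in the forms $|\int f_i D_i u| \le \tfrac{\delta}{2}\|Du\|_{L_2}^2 + N\|f\|_{L_2}^2$ and $|\int g u| \le \tfrac{\lambda}{2}\|u\|_{L_2}^2 + \tfrac{1}{2\lambda}\|g\|_{L_2}^2$ yields
$$\lambda \|u\|_{L_2(\bR_T \times \Xi^k)}^2 + \|Du\|_{L_2(\bR_T \times \Xi^k)}^2 \le N \|f\|_{L_2(\bR_T \times \Xi^k)}^2 + N\lambda^{-1} \|g\|_{L_2(\bR_T \times \Xi^k)}^2,$$
which is the claimed bound after taking square roots. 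Uniqueness follows from the same estimate applied to a difference of two solutions.

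The only mild technical point I anticipate concerns density and trace theory on the simple wedge $\Xi^k$: one needs that $C^\infty$ functions vanishing near the Dirichlet faces are dense in $V$, and that traces on the conormal faces are well-defined. Both are standard for the product-of-half-spaces domain $\Xi^k$ (e.g.\ by successive reflection across each coordinate hyperplane), so no real obstacle arises, consistent with the authors' remark that the result is a simple consequence of Lax--Milgram.
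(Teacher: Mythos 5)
Your argument is correct and is exactly the route the paper intends: the authors give no proof of Theorem \ref{thm0411_1} beyond the remark that it is ``a simple consequence of the Lax--Milgram lemma,'' and your variational setup (Gelfand triple $V\hookrightarrow H\hookrightarrow V^*$ with $V$ encoding the Dirichlet faces, coercivity from \eqref{eq5.21} and $\lambda>0$, the $\lambda$-weighted dual norm for $\Div f+g$, and the energy estimate by testing with $u$) is the standard way to fill that in. One small correction: since $-u_t+\cL u-\lambda u=F$ means $u_t=\cL u-\lambda u-F$, the problem on $\bR_T=(-\infty,T)$ is a \emph{forward} Cauchy problem with no data prescribed at $t=T$ (prescribing $u(T^-)=0$, or reversing time, would make it backward and ill-posed); existence should instead be obtained by solving on $(S,T)$ with $u(S)=0$ and letting $S\to-\infty$ using your uniform energy bound --- which is consistent with how your own estimate treats $t=T$ as the free endpoint where $\tfrac12\|u(T)\|_H^2\ge 0$ is discarded.
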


Throughout the rest of the paper, by $\bL_1$ we mean the collection of divergence form operators as in \eqref{eq1206} with $a^{i1} = 0$ for $i = 2, \ldots, d$. Similarly, $\bL_2$ is the collection of divergence form operators satisfying $a^{1j} = 0$ for $j=2,\ldots,d$.

When the operator $\cL$ is either in $\bL_1$ or $\bL_2$, we obtain the following observations in Lemmas \ref{lem02} and \ref{lem01} below.

\begin{lemma}
							\label{lem02}
Let $\cL \in \bL_2$, $\lambda \ge 0$,  $k \in [1,d]$ be an integer, $R \in (0,\infty]$,
and $f \in L_2(Q_R^k)$.
If $u \in \cH^1_2(Q_R^k)$ satisfies
\begin{equation*}
- u_t + \cL u - \lambda u = f
\end{equation*}
in $Q_R^k$ with the conormal derivative boundary condition $a^{11} D_1 u = 0$ on $(-R^2,0)\times \Gamma_R^{k,1}$ and the Dirichlet boundary condition $u = 0$ on $(-R^2,0)\times \Gamma_R^{k,i}$, $i = 2,\ldots,k$, then $w: = D_1u$ belongs to $\cH_2^1(Q_r^k)$ for any $r\in (0,R)$ and satisfies
\begin{equation}
							\label{eq1211}
-w_t + \cL w -\lambda w = D_1 f
\end{equation}
in $Q_r^k$
with the Dirichlet boundary condition $w = 0$
on $(-r^2,0) \times \left(B_r \cap \partial \Xi^k\right)$.
\end{lemma}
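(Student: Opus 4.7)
The plan is to differentiate the equation for $u$ in $x_1$: because the coefficients $a^{ij}$ are independent of $x_1$ (since $a^{ij}$ depends only on $(t,x_d)$ and we may assume $d\ge 2$), this formally yields \eqref{eq1211} for $w=D_1u$. The Dirichlet data for $w$ on $\partial\Xi^k\cap B_r$ is automatic: on $\Gamma^{k,i}_r$ with $i\ge 2$, $w$ is a tangential derivative of $u\equiv 0$, while on $\Gamma^{k,1}_r$ the conormal condition $a^{11}D_1u=0$ together with the ellipticity bound $a^{11}\ge\delta$ forces $D_1u=0$ in the trace sense. I would carry this out by first deriving the distributional identity for $w$ and then upgrading its regularity.

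For the derivation, take any $\psi\in C_0^\infty(Q_r^k)$ vanishing on $\partial\Xi^k\cap B_r$ and substitute $\varphi=-D_1\psi$ into the weak formulation of the equation for $u$. Such $\varphi$ is admissible: on $\Gamma^{k,i}_R$ with $i\ge 2$ the function $\psi$ (and hence its tangential derivative $D_1\psi$) vanishes, while $\varphi$ is free to be nonzero on $\Gamma^{k,1}_R$ because the weak form for $u$ accommodates the conormal condition there. Integrating by parts in $x_1$ and using $D_1 a^{ij}=0$, the elliptic term $-\int a^{ij}D_ju\,D_iD_1\psi$ transforms into $\int a^{ij}D_jD_1u\,D_i\psi$ plus a boundary integral on $\Gamma^{k,1}_r$ of the form $\int a^{ij}D_ju\,D_i\psi|_{x_1=0}$. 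For $i\ge 2$ this boundary factor vanishes since $\psi=0$ on $\Gamma^{k,1}_r$ and $D_i$ is tangential to $\{x_1=0\}$. For $i=1$, the hypothesis $\cL\in\bL_2$ kills the sum over $j\ge 2$, so only $a^{11}D_1u\,D_1\psi|_{x_1=0}$ remains, and this vanishes by the conormal condition. The time and zero-order terms produce $-\int(D_1u)_t\psi$ and $-\lambda\int D_1u\,\psi$ with no extra boundary contributions since $\psi=0$ on $\Gamma^{k,1}_r$. Combining these identities yields the weak form of \eqref{eq1211} with homogeneous Dirichlet data on $\partial\Xi^k\cap B_r$.

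To promote the distributional identity to $w\in\cH^1_2(Q_r^k)$, I apply the difference-quotient method in $x_1$: since shifting in $x_1$ leaves $x_d$ unchanged, the translate $u(t,x+he_1)$ solves the same equation on a shifted domain, and $u^h:=h^{-1}(u(\cdot+he_1)-u)$ is a weak solution on $Q_{R-h}^k$. A Caccioppoli-type estimate obtained by testing against $\eta^2u^h$ with a suitable cutoff $\eta\in C_0^\infty$ supported in $Q_R^k$ and equal to $1$ on $Q_r^k$ yields a uniform bound $\|u^h\|_{\cH^1_2(Q_r^k)}\le C$; passing to the weak limit gives $D_1u\in\cH^1_2(Q_r^k)$, and $(D_1u)_t\in\bH^{-1}_2(Q_r^k)$ follows directly from \eqref{eq1211}.

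The main obstacle is controlling the Caccioppoli energy estimate near $\Gamma^{k,1}_r$, because $u^h$ has no clean boundary condition there: the conormal condition for $u$ is posed at $x_1=0$, while the one for $u(\cdot+he_1)$ is posed at $x_1=-h$, so neither holds exactly for $u^h$ on $\Gamma^{k,1}_r$. The extra boundary term generated when testing against $\eta^2u^h$ must be rewritten using both conormal conditions and absorbed on the left-hand side; the $\bL_2$ structure (absence of cross coefficients $a^{1j}$, $j\ge 2$) is again essential here. A less delicate alternative is to skip the difference quotient and apply a localized $\cH^1_2$-solvability result in the spirit of Theorem \ref{thm0411_1} to the equation derived in the previous step, using uniqueness to identify its solution with $D_1u$.
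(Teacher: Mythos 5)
Your formal differentiation of the weak formulation and the identification of the boundary conditions are consistent with what must be true, but the proof has a genuine gap exactly where the lemma is hard: establishing that $D_1u$ actually lies in $\cH^1_2$ up to the conormal face $\Gamma^{k,1}$. Your primary route, difference quotients in $x_1$, founders on the boundary term you yourself flag. When you test the equation for $u^h=h^{-1}(u(\cdot+he_1)-u)$ against $\eta^2u^h$, the conormal contribution on $\Gamma^{k,1}$ is $h^{-1}\int (a^{11}D_1u)(t,he_1+x)\,\eta^2u^h\,dS\,dt$: the trace of $a^{11}D_1u$ on the interior hyperplane $\{x_1=h\}$ is only defined for a.e.\ $h$ as an $L_2$ function with no uniform bound, and dividing by $h$ makes it worse. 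Rewriting it ``using both conormal conditions'' would require expressing $(a^{11}D_1u)(t,h,\cdot)-(a^{11}D_1u)(t,0,\cdot)$ as $\int_0^h D_1(a^{11}D_1u)$, but $D_1(a^{11}D_1u)$ is precisely the second derivative whose existence is at stake (via the equation it is only an element of $\bH^{-1}_2$). So the absorption you gesture at is not a routine estimate; it is the crux, and it is not carried out. Your fallback --- solve the differentiated equation by a Lax--Milgram-type result and ``use uniqueness to identify its solution with $D_1u$'' --- begs the same question: uniqueness for the Dirichlet problem holds in $\cH^1_2$, while $D_1u$ is only known to be in $L_2$. Identifying an $L_2$ very-weak solution with the $\cH^1_2$ solution would require solving the adjoint problem with arbitrary $L_2$ right-hand side by sufficiently regular test functions, which for merely measurable $a^{ij}(t,x_d)$ is not available without an approximation of the coefficients. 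A smaller but related slip: you assert that the conormal condition ``forces $D_1u=0$ in the trace sense'' on $\Gamma^{k,1}$ before $D_1u$ is known to have a trace; the conormal condition is encoded weakly and only yields a trace statement after the $\cH^1_2$ regularity of $w$ is in hand.

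For comparison, the paper closes exactly this gap by a different mechanism: it localizes with a cutoff that is even in $x_1$ (so the conormal condition is preserved), mollifies $a^{ij}$ in $(t,x_d)$, solves the mollified divergence-form problem to get $v^\varepsilon$, and then uses the nondivergence $W^{1,2}_2$ theory (Corollary \ref{cor0411_1}) together with the $\bL_2$ structure to show $v^\varepsilon\in W^{1,2}_2$ with the genuine Neumann condition $D_1v^\varepsilon=0$. Only then is the equation differentiated, and the two limits $D_1v^\varepsilon\to D_1v$ in $L_2$ and $D_1v^\varepsilon\to\tilde w$ in $\cH^1_2$ are matched. If you want to salvage your outline, you need some substitute for this regularization-plus-identification step; neither the normal difference quotient nor the bare uniqueness appeal supplies it.
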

\begin{proof}
Take an infinitely differentiable function $\varphi(t,x)$ defined in $\bR \times \bR^d$ with a compact support in $(-R^2,R^2) \times B_R$ such that $\varphi$ is even with respect to $x_1$.
Note that $D_1 \varphi = 0$ on $\bR \times \Gamma^{k,1}$.
Set
\begin{equation}
							\label{eq0604_3}
v := \varphi u,
\quad
h_i := a^{ij} u D_j \varphi,
\quad
g := a^{ij} (D_i \varphi) (D_j u) - \varphi_t u + \varphi f.
\end{equation}
Note that $h_i,g\in L_2(\bR_0\times \Xi^k)$.
Then $v$ satisfies
\begin{equation}
							\label{eq0604_4}
- v_t + \cL v - \lambda v = D_i h_i + g
\end{equation}
in $\bR_0 \times \Xi^k$ with the conormal derivative boundary condition
$$
a^{11}D_1 v = a^{11} u D_1 \varphi = 0=h_1
$$
on $\bR_0 \times \Gamma^{k,1}$ and the Dirichlet boundary condition $v = 0$ on $\bR_0 \times \Gamma^{k,i}$, $i=2,\ldots,k$.
For $\varepsilon>0$, let $a^{ij}_\varepsilon$ be the standard mollifications of $a^{ij}$ with respect to $(t,x_d)$ and $h_i^\varepsilon := a^{ij}_\varepsilon u D_j \varphi$.
Note that, due to the choice of $\varphi$, we have $h_1^\varepsilon = 0$ on $\bR_0 \times \Gamma^{k,1}$.
Using Theorem \ref{thm0411_1} we find a unique solution $v^\varepsilon \in \cH_2^1(\bR_0 \times \Xi^k)$ to the equation
\begin{equation}
								\label{eq0603_1}
- v^\varepsilon_t + D_i\left(a^{ij}_\varepsilon D_j v^\varepsilon\right) - \lambda v^\varepsilon = D_i h_i^\varepsilon + g
\end{equation}
in $\bR_0 \times \Xi^k$ with the conormal derivative boundary condition
$$
a^{11}_\varepsilon D_1 v^\varepsilon = h_1^\varepsilon = 0
$$
on $\bR_0 \times \Gamma^{k,1}$ and the Dirichlet boundary condition $v^\varepsilon = 0$ on $\bR_0 \times \Gamma^{k,i}$, $i=2,\ldots,k$. Since $h_i^\varepsilon\to h_i$ in $L_2(\bR_0\times \Xi^k)$, we see that $v^\varepsilon \to v$ in $\cH_2^1(\bR_0 \times \Xi^k)$. In particular,
\begin{equation}
							\label{eq0604_1}
D_1 v^{\varepsilon} \to D_1 v
\quad
\text{in}
\,\,
L_2(\bR_0 \times \Xi^k).
\end{equation}

Take $\lambda_{0,\varepsilon}$ to be a number bigger than $\lambda_0$  in Corollary \ref{cor0411_1} determined by $d$, $\delta$, and the bound of $D_i(a^{ij}_\varepsilon)$.
Since $D_i h_i^\varepsilon + g \in L_2(\bR_0 \times \Xi^k)$, by Corollary \ref{cor0411_1} there exists a unique $u^\varepsilon \in W_2^{1,2}(\bR_0 \times \Xi^k)$ satisfying
\begin{equation*}
- u^\varepsilon_t + a^{ij}_\varepsilon D_{ij} u^\varepsilon + D_i(a^{ij}_\varepsilon) D_j u^\varepsilon - \lambda_{0,\varepsilon} u^\varepsilon = D_i h_i^\varepsilon + g + (\lambda - \lambda_{0,\varepsilon}) v^\varepsilon
\end{equation*}
in $\bR_0 \times \Xi^k$ with the Neumann boundary condition $D_1u^\varepsilon = 0$ on $\bR_0 \times \Gamma^{k,1}$ and the Dirichlet boundary condition $u^\varepsilon = 0$ on $\bR_0 \times \Gamma_{k,i}$, $i=2,\ldots,k$.
Thanks to the boundary conditions of $u^\varepsilon$ and the facts that $\cL \in \bL_2$ and  $h_1^\varepsilon = 0$ on $\bR_0 \times \Gamma^{k,1}$, $u^\varepsilon$ satisfies the divergence type equation
$$
- u^\varepsilon_t + D_i (a^{ij}_\varepsilon D_j u^\varepsilon) - \lambda_{0,\varepsilon} u^\varepsilon = D_i h_i^\varepsilon + g + (\lambda - \lambda_{0,\varepsilon}) v^\varepsilon
$$
in $\bR_0 \times \Xi^k$ with the same boundary conditions for \eqref{eq0603_1}.
Since $u^\varepsilon \in \cH_2^1(\bR_0 \times \Xi^k)$ and $v^\varepsilon$ also satisfies the above equation with the same boundary conditions, by the uniqueness, we have $v^\varepsilon = u^\varepsilon \in W_2^{1,2}(\bR_0 \times \Xi^k)$, $D_1v^\varepsilon = 0$ on $\bR_0 \times \Gamma^{k,1}$, and  $v^\varepsilon = 0$ on $\bR_0 \times \Gamma_{k,i}$, $i=2,\ldots,k$.
Then from \eqref{eq0603_1}, we see that $w^\varepsilon := D_1 v^\varepsilon \in \cH_2^1(\bR_0 \times \Xi^k)$ satisfies the divergence type equation
\begin{equation*}
- w^\varepsilon_t + D_i\left(a^{ij}_\varepsilon D_j w^\varepsilon\right) - \lambda w^\varepsilon = D_1(D_1 h_1^\varepsilon + g) + \sum_{i=2}^d D_i (D_1 h_i^\varepsilon)
\end{equation*}
in $\bR_0\times\Xi^k$ with the Dirichlet boundary condition $w^\varepsilon = 0$ on $\bR_0 \times \partial \Xi^k$.
Since $a^{ij}$ and $a^{ij}_\varepsilon$ are independent of $x_1$, we have
$$
D_1 h_i = a^{ij} D_1(uD_j \varphi),
\quad
D_1 h_i^\varepsilon = a^{ij}_\varepsilon D_1(uD_j \varphi),
$$
and $D_1 h_i^\varepsilon \to D_1 h_i$ in $L_2(\bR_0 \times \Xi^k)$.
Thus $w^\varepsilon$ converges in $\cH^1_2(\bR_0\times \Xi^k)$ to the unique solution $\tilde{w} \in \cH_2^1(\bR_0 \times \Xi^k)$ to the equation
\begin{equation}
                                    \label{eq11.32}
-\tilde{w}_t + D_i (a^{ij} D_j \tilde{w} ) - \lambda \tilde{w} = D_1( D_1 h + g) + \sum_{i=2}^d D_i (D_1h_i)
\end{equation}
in $\bR_0 \times \Xi^k$ with the Dirichlet boundary condition $\tilde{w} = 0$ on $\bR_0 \times \partial \Xi^k$.
Especially,
$$
D_1 v^\varepsilon = w^\varepsilon \to \tilde{w}
\quad
\text{in}
\,\,
L_2(\bR_0 \times \Xi^k).
$$
This combined with \eqref{eq0604_1} proves that $D_1(\varphi u)=D_1v=\tilde{w}\in \cH^1_2(\bR_0\times\Xi^k)$.
Therefore, upon choosing an appropriate cut-off function $\varphi$, we conclude that $w=D_1 u$ satisfies \eqref{eq1211} with the desired boundary conditions.
\end{proof}

\begin{lemma}
							\label{lem01}
Let $\cL \in \bL_1$, $\lambda \ge 0$, $k \in [1,d]$ be an integer, $R \in (0,\infty]$,
and $f \in L_2(Q_R^k)$.
If $u \in \cH^1_2(Q_R^k)$ satisfies
\begin{equation*}
- u_t + \cL u -\lambda u = f
\end{equation*}
in $Q_R^k$ with the Dirichlet boundary condition $u = 0$ on $(-R^2,0)\times \left(B_R \cap \partial \Xi^k\right)$,
then we have $w: = D_1u$ belongs to $\cH^1_{2}(Q_r^k)$ for any $r\in (0,R)$ and satisfies
\begin{equation}
							\label{eq0604_5}
-w_t + \cL w -\lambda w = D_1 f
\end{equation}
in $Q_r^k$
with the conormal derivative boundary condition
$a^{1j} D_j w = f$ on $(-r^2,0) \times \Gamma_r^{k,1}$
and the Dirichlet boundary condition $w=0$ on $(-r^2,0) \times \Gamma_r^{k,i}$, $i=2, \ldots,k$.
\end{lemma}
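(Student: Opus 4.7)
The plan is to parallel the proof of Lemma \ref{lem02}, swapping the roles of the Dirichlet and conormal boundary conditions on $\Gamma^{k,1}$. One simplification this time: the localizing cut-off requires no parity in $x_1$, since $u = 0$ on all of $\partial \Xi^k$ makes $v := \varphi u$ Dirichlet on the full boundary for any smooth $\varphi$. Fix $\varphi \in C^\infty_c((-R^2, R^2) \times B_R)$ with $\varphi \equiv 1$ in a neighborhood of $\overline{Q_r^k}$, set $h_i := a^{ij} u D_j \varphi$ and $g := a^{ij}(D_i \varphi)(D_j u) - \varphi_t u + \varphi f$, so that $v$ satisfies $-v_t + \cL v - \lambda v = D_i h_i + g$ in $\bR_0 \times \Xi^k$ with full Dirichlet data.

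Mollify $a^{ij}$ in $(t, x_d)$; since $a^{i1} = 0$ for $i \ge 2$, the mollifications $a^{ij}_\varepsilon$ satisfy the same, keeping the corresponding operator in $\bL_1$. Theorem \ref{thm0411_1} produces a unique $v^\varepsilon \in \cH^1_2(\bR_0 \times \Xi^k)$ solving the mollified divergence equation with Dirichlet data, and $v^\varepsilon \to v$ in $\cH^1_2$. Corollary \ref{cor0411_1}, applied with a sufficiently large shift $\lambda_{0,\varepsilon}$, provides $u^\varepsilon \in W_2^{1,2}$ solving the corresponding non-divergence equation with Dirichlet boundary. Rewriting that equation in divergence form and invoking the uniqueness in Theorem \ref{thm0411_1} yields $u^\varepsilon = v^\varepsilon \in W_2^{1,2}$, so in particular $v^\varepsilon = 0$ on $\bR_0 \times \partial \Xi^k$ in the classical sense.

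Setting $w^\varepsilon := D_1 v^\varepsilon$, differentiation of the divergence equation for $v^\varepsilon$ in $x_1$ (using $x_1$-independence of $a^{ij}_\varepsilon$) shows that $w^\varepsilon \in \cH^1_2$ solves
\[
-w^\varepsilon_t + D_i\bigl(a^{ij}_\varepsilon D_j w^\varepsilon\bigr) - \lambda w^\varepsilon = D_1\bigl(D_1 h_1^\varepsilon + g\bigr) + \sum_{i \ge 2} D_i\bigl(D_1 h_i^\varepsilon\bigr).
\]
On $\Gamma^{k,i}$ for $i \ge 2$, $w^\varepsilon = 0$ as a tangential derivative of the Dirichlet data on $v^\varepsilon$. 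On $\Gamma^{k,1}$, the natural conormal identity $\sum_j a^{1j}_\varepsilon D_j w^\varepsilon = (D_1 h_1^\varepsilon + g)|_{\Gamma^{k,1}}$ follows by evaluating the non-divergence equation for $v^\varepsilon$ pointwise at $\Gamma^{k,1}$: all tangential derivatives of $v^\varepsilon$ vanish there, the $\bL_1$ hypothesis kills the mixed terms $a^{i1}_\varepsilon D_{i1} v^\varepsilon$ for $i \ge 2$ and makes $D_i(a^{ij}_\varepsilon) D_j v^\varepsilon$ vanish on $\Gamma^{k,1}$, and the tangential terms $D_i h_i^\varepsilon$ for $i \ge 2$ also vanish on $\Gamma^{k,1}$ because each $h_i^\varepsilon$ carries a factor of $u$.

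Finally, since $D_1 h_i^\varepsilon \to D_1 h_i$ in $L_2$ (the mollified coefficients commute with $D_1$), $w^\varepsilon$ converges in $\cH^1_2(\bR_0 \times \Xi^k)$ to the unique $\tilde w \in \cH^1_2$ solving the analogous limit equation with the limit boundary conditions; together with the $L_2$ convergence $D_1 v^\varepsilon \to D_1 v$, this identifies $\tilde w = D_1 v$. Restricting to $Q_r^k$, where $\varphi \equiv 1$ forces $h_i \equiv 0$ and $g = f$, shows that $w = D_1 u$ satisfies \eqref{eq0604_5} on $Q_r^k$ with the conormal condition $a^{1j} D_j w = f$ on $\Gamma^{k,1}_r$ and Dirichlet on the remaining faces. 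The main obstacle is the pointwise-to-variational identification of the conormal boundary condition for $w^\varepsilon$ on $\Gamma^{k,1}$, where the $\bL_1$ structure $a^{i1} = 0$ for $i \ge 2$ plays its crucial role.
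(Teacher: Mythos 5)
Your overall skeleton coincides with the paper's: localize with a cut-off (no parity needed in $x_1$), mollify $a^{ij}$ in $(t,x_d)$, identify the Lax--Milgram solution $v^\varepsilon$ with the $W_2^{1,2}$ solution from Corollary \ref{cor0411_1}, differentiate the divergence-form equation in $x_1$, and pass to the limit. Up to the point of assigning boundary conditions to $w^\varepsilon=D_1v^\varepsilon$, everything you write matches the paper.

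The gap is exactly at the step you yourself call ``the main obstacle.'' You derive the conormal identity $a^{1j}_\varepsilon D_j w^\varepsilon = D_1h_1^\varepsilon+g$ by evaluating the non-divergence equation for $v^\varepsilon$ pointwise on $\Gamma^{k,1}$, but this is not available at the regularity you have: $v^\varepsilon$ is only in $W_2^{1,2}$ (the data $D_ih_i^\varepsilon+g$ are no better than $L_2$ since $h_i^\varepsilon$ carries a factor of $u\in\cH_2^1$, and $\Xi^k$ has edges), so $D_jw^\varepsilon=D_{1j}v^\varepsilon$ is merely an $L_2$ function with no trace on $\Gamma^{k,1}$, and the pointwise cancellations you list cannot be read off on the boundary. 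Moreover, even granting a pointwise identity, the conormal condition in $\cH^1_2$ is a statement about the absence of boundary terms in the variational formulation, and converting one into the other requires an integration by parts whose boundary contributions are precisely the object in question. The paper circumvents this with a reflection-and-uniqueness device: the odd extension $\bar v^\varepsilon$ across $\{x_1=0\}$ yields that $D_1\bar v^\varepsilon$ solves a Dirichlet problem in the doubled domain $\Omega$ (the jump of the odd-extended $\bar a^{1j}_\varepsilon$, $j\ge 2$, is harmless because the only boundary trace appearing is that of the \emph{first} derivatives $D_j\bar v^\varepsilon$, $j\ge2$, which vanish on $\Gamma^{k,1}$ --- and this is the one place where $\cL\in\bL_1$ is used); separately, the even extension of the Lax--Milgram solution $\tilde w^\varepsilon$ with the desired conormal data solves the same Dirichlet problem, so uniqueness gives $w^\varepsilon=\bar{\tilde w}^\varepsilon$ and hence the conormal condition in the weak sense. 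You need either this argument or a direct verification of the weak formulation using only traces of first derivatives of $v^\varepsilon$; as written, your proof of the boundary condition on $\Gamma^{k,1}$ does not go through.
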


\begin{proof}
Take $\varphi(t,x)$ as in the proof of Lemma \ref{lem02} (not necessarily to be even in $x_1$ this time).
Set $v$, $h_i$, and $g$ as in \eqref{eq0604_3} and define $a^{ij}_\varepsilon$ and $h_i^\varepsilon$ as in the proof of Lemma \ref{lem02}. Then $v$ satisfies \eqref{eq0604_4}
in $\bR_0 \times \Xi^k$ with the Dirichlet boundary condition $v = 0$ on $\bR_0 \times \partial \Xi^k$. Let $v^\varepsilon \in \cH_2^1(\bR_0 \times \Xi^k)$ be the unique solution of  \eqref{eq0603_1} in $\bR_0\times \Xi^k$ with the Dirichlet boundary condition $v^\varepsilon = 0$ on $\bR_0\times \partial \Xi^k$. By the same argument, we have \eqref{eq0604_1} and $v^\varepsilon\in W^{1,2}_2(\bR_0\times \Xi^k)$.

Here it is more involved to verify the conormal derivative boundary condition since it is understood in the weak sense. To this end, we use an extension argument. Set $\bar{v}^\varepsilon$ to be the odd extension of $v^\varepsilon$ with respect to $x_1$. Since $v^\varepsilon \in W_2^{1,2}(\bR_0 \times \Xi^k)$ with the Dirichlet boundary condition on $\bR_0 \times \Gamma^{k,1}$, it follows that $\bar{v}^\varepsilon \in W_2^{1,2}(\bR_0 \times \Omega)$, where $\Omega$ is defined in \eqref{eq0424_3}.
Set $\bar{a}^{1j}_\varepsilon$, $j=2,\ldots, d$, to be the odd extensions of $a^{1j}_\varepsilon$ in $x_1$.
For the other $(i,j)$, we set $\bar{a}^{ij}_\varepsilon = a^{ij}_\varepsilon$.
Also set $\bar{h}_1^\varepsilon$ to be the even extension of $h_1^\varepsilon$, $\bar{h}_i^\varepsilon$, $i=2, \ldots,d$, to be the odd extensions of $h_i^\varepsilon$, and $\bar{g}$ to be the odd extension of $g$.
Then $\bar{v}^\varepsilon$ satisfies
$$
- \bar{v}^\varepsilon_t + D_i(\bar{a}^{ij}_\varepsilon D_j \bar{v}^\varepsilon) - \lambda \bar{v}^\varepsilon =  D_i \bar{h}_i^\varepsilon + \bar{g}
$$
in $\bR_0 \times \Omega$ with the Dirichlet boundary condition on $\bR_0 \times \partial \Omega$.

Since $u = 0$ on $\bR_0 \times \Gamma^{k,1}$, we see that $D_1 \bar{h}_1^\varepsilon \in L_2(\bR_0 \times \Omega)$ and $D_1 \bar{h}_1^\varepsilon$ is the odd extension of $D_1 h_1^\varepsilon$ in $x_1$, and $D_1 \bar{h}_i^\varepsilon \in L_2(\bR_0 \times \Omega)$, $i=2,\ldots,d$, and $D_1 \bar{h}^\varepsilon_i$ are the even extensions of $D_1 h_i^\varepsilon$ in $x_1$.
From the fact that $v^{\varepsilon} = 0$ on $\bR_0 \times \partial \Xi^k$, $\cL\in \bL_1$, and integrating by parts in $x_1$, it follows that $w^{\varepsilon}:=D_1 \bar{v}^\varepsilon \in \cH_2^1(\bR_0 \times \Omega)$ satisfies
\begin{equation}
							\label{eq0604_2}
- w^{\varepsilon}_t + D_i\left(\bar{a}^{ij}_\varepsilon D_j w^{\varepsilon}\right) - \lambda w^{\varepsilon} =  D_1 \left( D_1 \bar{h}_1^\varepsilon + \bar{g} \right) + \sum_{i=2}^d D_i ( D_1 \bar{h}_i^\varepsilon)
\end{equation}
in $\bR_0 \times \Omega$ with the Dirichlet boundary condition on $\bR_0 \times \partial \Omega$.
In fact, $\bar a^{ij}_\varepsilon$ are independent of $x_1$, except $\bar a^{1j}_\varepsilon$, $j=2,\ldots,d$, which have jump discontinuities at $x_1=0$. But for these $j$ and any $\phi\in C_0^\infty(\bR\times\Omega)$,
\begin{align*}
&\int_{\bR_0\times\Omega}\bar a^{1j}_\varepsilon D_j w^\varepsilon D_1\phi\,dx\,dt\\
&=\int_{\bR_0\times\Xi^k}\bar a^{1j}_\varepsilon D_j w^\varepsilon D_1\phi\,dx\,dt+\int_{\bR_0\times(\Omega\setminus\Xi^k)}\bar a^{1j}_\varepsilon D_j w^\varepsilon D_1\phi\,dx\,dt\\
&=-\int_{\bR_0\times\Xi^k}\bar a^{1j}_\varepsilon D_j \bar v^\varepsilon D_1^2\phi\,dx\,dt-\int_{\bR_0\times(\Omega\setminus\Xi^k)}\bar a^{1j}_\varepsilon D_j \bar v^\varepsilon D_1^2\phi\,dx\,dt\\
&\qquad -2\int_{\bR_0\times\Gamma^{k,1}} \bar a^{1j}_\varepsilon D_j \bar v^\varepsilon D_1\phi\,dx\,dt\\
&=-\int_{\bR_0\times\Omega}\bar a^{1j}_\varepsilon D_j \bar v^\varepsilon D_1^2\phi\,dx\,dt,
\end{align*}
where in the last equality we have used $D_j \bar v^\varepsilon=0$ on $\bR_0\times\Gamma^{k,1}$. We note that \eqref{eq0604_2} is the only place in the proof where we use $\cL\in \bL_1$, i.e., $ a^{i1}=0$, $i=2,\ldots,d$.

On the other hand, there is a unique solution $\tilde w^\varepsilon \in \cH_2^1(\bR_0 \times \Xi^k)$ to the equation
\begin{equation}
							\label{eq0704}
-\tilde w_t^\varepsilon + D_i(a^{ij}_\varepsilon D_j \tilde w^\varepsilon) - \lambda \tilde{w}^\varepsilon = D_1 \left(D_1 h_1^\varepsilon + g\right) + \sum_{i=2}^d D_i \left(D_1h_i^\varepsilon\right)
\end{equation}
with the conormal derivative boundary condition $a^{1j}_\varepsilon D_j \tilde w^\varepsilon = D_1 h_1^\varepsilon + g$ on $\bR_0 \times \Gamma^{k,1}$
and the Dirichlet boundary condition $\tilde w^\varepsilon = 0$ on $\bR_0 \times \Gamma^{k,i}$, $i=2,\ldots,k$.
Since $D_1 \bar{h}_1^\varepsilon + \bar{g}$ is the odd extension of $D_1 h_1^\varepsilon + g$ and
$D_1 \bar{h}_i^\varepsilon$, $i=2,\ldots,d$, are the even extensions of $D_1 h_i^\varepsilon$ with respect to $x_1$, the function $\bar{\tilde{w}}^\varepsilon \in \cH_2^1(\bR_0 \times \Omega)$, which is the even extension of $\tilde{w}^\varepsilon$ with respect to $x_1$, satisfies \eqref{eq0604_2} with the Dirichlet boundary condition on $\bR_0 \times \partial \Omega$.
By the uniqueness, we have $w^\varepsilon = \bar{\tilde{w}}^\varepsilon$.
This indicates that $\tilde{w}^\varepsilon$ in \eqref{eq0704} can be replaced by $w^\varepsilon$.
That is, as a function in $\cH_2^1(\bR_0 \times \Xi^k)$, $w^\varepsilon$ satisfies \eqref{eq0704} with the conormal derivative boundary condition $a^{1j}_\varepsilon D_j w^\varepsilon = D_1 h_1^\varepsilon + g$ on $\bR_0 \times \Gamma^{k,1}$ and the Dirichlet boundary condition $w^\varepsilon =0$ on $\bR_0 \times \Gamma^{k,i}$, $i= 2,\ldots,k$.
Then we see that, as in the proof of Lemma \ref{lem02}, $w^\varepsilon$ converges in $\cH^1_2(\bR_0\times \Xi^k)$ to the unique solution $\tilde{w} \in \cH_2^1(\bR_0 \times \Xi^k)$ of \eqref{eq11.32} with the conormal derivative boundary condition $a^{1j} D_j \tilde w= D_1 h_1+ g$ on $\bR_0 \times \Gamma^{k,1}$
and the Dirichlet boundary condition $\tilde w = 0$ on $\bR_0 \times \Gamma^{k,i}$, $i=2,\ldots,k$. In particular,
$$
D_1 v^\varepsilon = w^\varepsilon \to \tilde{w}
\quad
\text{in}
\,\,
L_2(\bR_0 \times \Xi^k).
$$
This combined with \eqref{eq0604_1} proves that $D_1(\varphi u)=D_1v=\tilde{w}\in \cH^1_2(\bR_0\times\Xi^k)$. Upon choosing an appropriate $\varphi$, we conclude that $w = D_1 u$ satisfies \eqref{eq0604_5} with the desired boundary conditions.
\end{proof}

\subsection{$C^{1,\alpha}$-estimates}

For functions defined on a subset $\cD$ in $\bR^{d+1}$, we denote
$$
[f]_{C^\alpha(\cD)}
= \sup_{\substack{(t,x), (s,y) \in \cD\\ (t,x) \ne (s,y)}} \frac{|f(t,x) - f(s,y)|}{|t-s|^{\alpha/2}+|x-y|^\alpha}.
$$

Notice that in the following lemma the operator $\cL$ is not necessarily in $\bL_1$ or $\bL_2$.

\begin{lemma}
                                \label{lem44}
Let $\lambda \ge 0$, $k\in [1,d]$ be an integer, and $u \in \cH^1_2(Q_2^k)$.
Suppose that $u$ satisfies
\begin{equation}
                                    \label{eq9.23}
-u_t+\cL u -\lambda u =0
\end{equation}
in $Q_2^k$ with the Dirichlet boundary condition $u = 0$ on $(-4,0)\times (B_2\cap \partial\Xi^k)$.
Then for $i \in \{ k+1, \ldots, d-1\}$, whenever this set is non-empty,
we have
\begin{equation}
                            \label{eq9.11}
[D_iu]_{C^\alpha(Q_1^{k})} \le N \|D_iu\|_{L_2(Q_2^{k})},
\end{equation}
where $\alpha = \alpha(d,\delta) \in (0,1)$
and $N = N(d,\delta)>0$ are constants.
If, in addition, $a^{dj}=0$ for $j=1,2,\ldots,l$, where $l = \min\{k, d-1\}$, then \eqref{eq9.11} holds true for $i = 1, \ldots, l$.
\end{lemma}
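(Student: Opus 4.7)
The plan is to differentiate the equation in the direction $x_i$, obtain a divergence form equation for $w:=D_iu$, and apply the classical parabolic De~Giorgi--Nash--Moser boundary H\"older estimate for divergence form equations with bounded measurable coefficients in Lipschitz domains.

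\textbf{First part: $i\in\{k+1,\dots,d-1\}$.}
Since $a^{mj}$ depends only on $(t,x_d)$ and $i\neq d$, the equation is translation invariant in $x_i$, and since $i>k$ the wedge $\Xi^k$ and every face $\Gamma^{k,j}$ are invariant under shifts in $x_i$. I would form the difference quotients $u^h(t,x):=h^{-1}(u(t,x+he_i)-u(t,x))$, which are well defined on $Q_2^k$, solve the same divergence form equation, and vanish on each $(-4,0)\times\Gamma^{k,j}$. Passing to the limit in $\cH^1_2$, $w=D_iu$ satisfies
\[
-w_t+\cL w-\lambda w=0\quad\text{in }Q_2^k
\]
together with $w=0$ on $(-4,0)\times(B_2\cap\partial\Xi^k)$. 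The wedge satisfies a uniform exterior measure-density condition, so the standard parabolic De~Giorgi--Nash--Moser boundary H\"older estimate (with the $-\lambda w$ term absorbed via a favourable Caccioppoli inequality, so that the constants depend only on $d,\delta$) gives \eqref{eq9.11}.

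\textbf{Second part: $i\in\{1,\dots,l\}$ under $a^{dj}=0$ for $j\le l$.}
The same difference quotient argument still yields the equation for $w=D_iu$ in the interior and the Dirichlet condition $w=0$ on $\Gamma^{k,j}$ for every $j\neq i$ (tangential derivative of $u=0$). The delicate point is the face $\Gamma^{k,i}$, where $x_i$ is the normal direction. I would handle it by reflection. Let $\bar u$ be the odd extension of $u$ in $x_i$ across $\Gamma^{k,i}$, and extend the coefficients by taking the odd extension (with a jump across $\{x_i=0\}$) of $a^{mj}$ whenever exactly one of $m,j$ equals $i$, and the even (unchanged) extension otherwise. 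Because the tangential derivatives $D_j u$ with $j\neq i$ vanish on $\Gamma^{k,i}$, the $i$-component of the conormal flux $\bar a^{ij}D_j\bar u$ is continuous across $\{x_i=0\}$, so $\bar u$ is a weak solution of a divergence form equation on the reflected domain; the reflection conjugates the coefficient matrix by a diagonal involution and preserves the ellipticity constant.

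To get the H\"older bound on $D_iu=D_i\bar u|_{x_i>0}$, I would differentiate the extended equation in $x_i$. The delta contributions produced by the jumps in the extended coefficients split into two types: those of the form $D_i(2a^{ij}\delta(x_i)D_j\bar u)$ with $j\neq i$, which vanish because $D_j\bar u|_{x_i=0}=0$; and those of the form $D_m(2a^{mi}\delta(x_i)D_i\bar u)$ with $m\neq i$. The term with $m=d$ is precisely $D_d(2a^{di}\delta(x_i)D_i\bar u)$, which is annihilated by the hypothesis $a^{di}=0$. The remaining surface contributions ($m\neq i,d$) can be reorganised as a conormal-type boundary condition for $w=D_iu$ on $\Gamma^{k,i}$; combined with the Dirichlet condition $w=0$ on the other faces, this furnishes a mixed boundary value problem for a divergence form operator with bounded measurable coefficients on the Lipschitz wedge, to which the boundary De~Giorgi--Nash--Moser H\"older estimate again applies, yielding \eqref{eq9.11}.

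\textbf{Main obstacle.} The hard part is the second case: justifying the differentiation across $\Gamma^{k,i}$ after reflection and showing that the residual surface terms produced by the jumps of $\bar a^{mi}$ fit into a mixed divergence form boundary value problem. The role of the hypothesis $a^{dj}=0$ for $j\le l$ is exactly to remove the single delta contribution that involves differentiating a jump in the $x_d$-direction of the $d$-th row, which otherwise would not be absorbable into a conormal boundary condition with bounded measurable data.
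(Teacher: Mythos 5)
Your first case ($i\ge k+1$, $\lambda=0$) is exactly the paper's argument: difference quotients in a direction tangential to every face, then the boundary De Giorgi--Nash--Moser estimate. But your treatment of $\lambda>0$ has a real gap. The favourable sign of $-\lambda w$ in the Caccioppoli inequality does give an $L_\infty$ bound uniform in $\lambda$, but it does not give the oscillation decay with $\lambda$-independent constants: in that step one works with $w-c$ for constants $c$ of the size of $\|w\|_{L_\infty}$, and $w-c$ solves an equation with inhomogeneity $\lambda c$, so a naive application of De Giorgi--Nash--Moser produces a constant depending on $\lambda$. The paper removes the $\lambda$-dependence by Agmon's device, setting $v(t,x,z)=u(t,x)\cos(\sqrt{\lambda}z)$ and applying the $\lambda=0$ case in one extra dimension; some such reduction is needed, since the uniformity in $\lambda$ is essential for the later mean oscillation estimates (Lemma \ref{lem5.6}).

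In the second case your reflection strategy is workable, but the step you yourself flag as the main obstacle is asserted incorrectly. The residual interface terms $D_m\bigl(2a^{mi}\delta(x_i)D_i\bar u\bigr)$, $m\neq i,d$, do \emph{not} amount to a conormal condition with bounded measurable data: in the weak formulation they contribute boundary integrals of the form $\int_{\Gamma^{k,i}} a^{mi}\,w\,D_m\phi\,dS$, i.e.\ tangential divergences of surface densities involving the unknown $w$ itself, to which the boundary De Giorgi--Nash--Moser estimate does not directly apply. The correct ``reorganisation'' is to modify the coefficient matrix by an antisymmetric part \emph{before} differentiating: since $a^{mi}$ is independent of $x_m$ and $x_i$ for $m\neq d$, one may set $\tilde a^{mi}=0$ and $\tilde a^{im}=a^{im}+a^{mi}$ without changing the operator, and the single obstruction is $m=d$ (where $a^{di}$ depends on $x_d$), which is exactly what the hypothesis $a^{di}=0$ removes. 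After this the operator lies in $\bL_1$ (in the relabelled coordinates), and the paper's Lemma \ref{lem01} — whose proof carries out the mollification/uniqueness argument needed to rigorously justify that $D_iu\in\cH^1_2$ and satisfies the conormal problem with \emph{zero} boundary data on $\Gamma^{k,i}$ and Dirichlet data on the other faces — puts $w=D_iu$ into a genuine mixed problem where De Giorgi--Nash--Moser applies. So you have correctly located where $a^{di}=0$ enters, but both the identification of the limiting boundary value problem and its rigorous derivation are missing as written.
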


\begin{proof}
We first prove the case $\lambda = 0$.
If $i = k+1, \ldots, d-1$, then by using the standard difference quotient argument (see, for instance, Theorems 8.8 and 8.12 of \cite{GT}), we see that $w := D_i u$ is in  $\cH_{2}^1(Q_{3/2}^k)$ and satisfies
$$
-w_t + \cL w = 0
$$
in $Q_{3/2}^k$ with the Dirichlet boundary condition $w = 0$ on $(-4,0) \times \left(B_{3/2} \cap \partial \Xi^k \right)$.
Then using the De Giorgi--Nash--Moser estimate, we get
$$
[w]_{C^\alpha(Q_1^k)} \le N \|w\|_{L_2(Q_{3/2}^k)}
$$
for some $\alpha = \alpha(d,\delta) \in (0,1)$ and $N = N(d,\delta)$. Upon recalling that $w = D_i u$, we obtain \eqref{eq9.11}.

Now we assume that $1 \le i \le l$ and $a^{dj}=0$ for $j=1,\ldots, l$, where $l = \min\{k, d-1\}$. Due to the possibility of reordering the coordinates $(x_1,\cdots,x_l)$, it suffices to show
\begin{equation}
                                            \label{eq9.15}
[D_1u]_{C^\alpha(Q_1^{k})} \le N \|D_1u\|_{L_2(Q_2^{k})}.
\end{equation}
Let $\tilde a^{j1}=0$ and $\tilde a^{1j}=a^{1j}+a^{j1}$ for $j=2,\ldots,d$, $\tilde a^{ij}=a^{ij}$ for the other $(i,j)$, and $\tilde \cL$ be the corresponding operator.
Then $\tilde \cL \in \bL_1$, and since $a^{ij}$ satisfy \eqref{eq5.22} and $a^{d1}=0$, from \eqref{eq9.23} it follows that $u$ satisfies
$$
-u_t+\tilde\cL u=0 \quad \text{in}\,\,Q_2^k
$$
with the Dirichlet boundary condition $u = 0$ on $(-4,0)\times (B_2\cap \partial\Xi^k)$. Then
By Lemma \ref{lem01}, $w: = D_1u$ is in $\cH_{2}^1(Q_{3/2}^k)$ and satisfies
$$
-w_t + \tilde \cL w = 0
$$
in $Q_{3/2}^k$ with the conormal derivative boundary condition $\tilde a^{1j} D_j w = 0$ on $(-9/4,0) \times \Gamma_{3/2}^{k,1}$ and the Dirichlet boundary condition $w = 0$ on $(-9/4,0) \times \Gamma_{3/2}^{k,i}$, $i = 2, \ldots, k$.
Then again by the De Giorgi--Nash--Moser estimate, we obtain \eqref{eq9.15}.

For $\lambda > 0$, we use an idea by S. Agmon.
Set
$$
v(t,x,z) = u(t,x) \cos (\sqrt\lambda z),
$$
where $z \in \bR$.
Also set $\fB_r = \{(x,z) \in \bR^{d+1}: |x|^2 + z^2 < r^2 \}$ and
\begin{align*}
\fQ_2^k &= (-4,0) \times \left(\fB_2 \cap \Xi_{d+1}^k \right)\\
&= (-4,0) \times \left( \fB_2 \cap
\{(x,z) \in \bR^{d+1}: x_1, \ldots, x_k \in \bR^+\}\right).
\end{align*}
Then $v$ satisfies the Dirichlet boundary condition $v = 0$ on $(-4,0)\times (\fB_2\cap \partial\Xi^k_{d+1})$.
Moreover, as a function defined on $\fQ_2^k\subset \bR^{d+2}$, $v$ satisfies
$$
-v_t + \hat{\cL} v = 0
$$
in $\fQ_2^k$, where
$$
\hat{\cL} v = \sum_{i,j=1}^d D_i(a^{ij} D_j v) + D_{zz} v.
$$
Then by the above result for $\lambda = 0$ (we also need to interchange $x_d$ and $z$ coordinates), we obtain
$$
[D_i v]_{C^\alpha(\fQ_1^k)}
\le N \|D_i v\|_{L_2(\fQ_2^k)},
$$
which implies the desired inequality in the lemma.
\end{proof}

Next we derive a similar H\"older estimate for solutions satisfying the conormal derivative boundary condition on one face of the boundary and the Dirichlet boundary condition on the other faces.

\begin{lemma}
							\label{lem04}
Let $\cL \in \bL_2$, $\lambda \ge 0$, $k \in [1,d]$ be an integer, and $u \in \cH^1_2(Q_2^k)$.
Suppose that $u$ satisfies
$$
-u_t + \cL u  - \lambda u= 0
$$
in $Q_2^k$
with the conormal derivative boundary condition $a^{11}D_1u = 0$ on $(-4, 0) \times \Gamma_2^{k,1}$
and the Dirichlet boundary condition $u = 0$ on $(-4,0) \times \Gamma_2^{k,i}$, $i= 2, \ldots,k$.
Then we have
$$
[D_1 u]_{C^\alpha(Q_1^+)}
\le N \|D_1 u\|_{L_2(Q_2^+)},
$$
where $\alpha = \alpha(d,\delta) \in (0,1)$
and $N = N(d,\delta)$ are constants.
\end{lemma}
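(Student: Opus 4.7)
The plan is to mimic the proof of Lemma \ref{lem44}, but replacing the role of Lemma \ref{lem01} by Lemma \ref{lem02}, which is tailored to operators in $\bL_2$ with the mixed boundary conditions of the present lemma.

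First I would treat the case $\lambda=0$. Since $\cL\in\bL_2$ and $u\in\cH_2^1(Q_2^k)$ satisfies the stated mixed boundary conditions on the faces of $Q_2^k$, Lemma \ref{lem02} (applied with $R=2$ and $r=3/2$) tells us that $w:=D_1u$ lies in $\cH_2^1(Q_{3/2}^k)$ and satisfies
\begin{equation*}
-w_t+\cL w=0\quad\text{in }Q_{3/2}^k
\end{equation*}
with the \emph{full} Dirichlet boundary condition $w=0$ on $(-9/4,0)\times(B_{3/2}\cap\partial\Xi^k)$. The crucial gain here is that the mixed boundary condition for $u$ is converted into a pure Dirichlet condition for $w$, so the standard interior and boundary De Giorgi--Nash--Moser estimate for divergence form parabolic equations in a Lipschitz domain with zero Dirichlet data applies. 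This yields
\begin{equation*}
[w]_{C^\alpha(Q_1^k)}\le N\|w\|_{L_2(Q_{3/2}^k)}\le N\|D_1u\|_{L_2(Q_2^k)}
\end{equation*}
for some $\alpha=\alpha(d,\delta)\in(0,1)$ and $N=N(d,\delta)$, which is the desired estimate when $\lambda=0$.

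For general $\lambda\ge 0$ I would use Agmon's trick, exactly as in the final step of the proof of Lemma \ref{lem44}. Set
\begin{equation*}
v(t,x,z)=u(t,x)\cos(\sqrt{\lambda}\,z),\qquad z\in\bR,
\end{equation*}
and define $\fB_r$ and $\fQ_r^k\subset\bR^{d+2}$ as in that proof. Then $v$ satisfies $-v_t+\hat{\cL}v=0$ in $\fQ_2^k$ with $\hat{\cL}v=\sum_{i,j=1}^d D_i(a^{ij}D_jv)+D_{zz}v$, which is again a divergence form operator lying in the analogue of $\bL_2$ in one higher spatial dimension. Moreover, $v$ inherits from $u$ the conormal derivative condition $a^{11}D_1v=0$ on the face $x_1=0$ and the Dirichlet condition on the remaining faces $x_i=0$, $i=2,\ldots,k$; the extra $z$-direction introduces no new boundary. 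Applying the $\lambda=0$ case established above to $v$ on the $(d+1)$-dimensional domain $\fQ_2^k$ gives
\begin{equation*}
[D_1v]_{C^\alpha(\fQ_1^k)}\le N\|D_1v\|_{L_2(\fQ_2^k)},
\end{equation*}
and restricting to $z=0$ (where $\cos(\sqrt{\lambda}z)=1$) and integrating out the bounded $z$-interval on the right-hand side recovers the claimed estimate for $D_1u$ on $Q_1^k$.

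The main obstacle is really the $\lambda=0$ step, and within it the nontrivial content sits entirely in Lemma \ref{lem02}: once we know that $D_1u$ satisfies a homogeneous divergence form parabolic equation with zero Dirichlet data on the full boundary of $B_{3/2}\cap\Xi^k$, the De Giorgi--Nash--Moser Hölder estimate is classical (the domain $\Xi^k$ is Lipschitz and the coefficients are only measurable and bounded). The Agmon reduction is then a routine device to absorb the $-\lambda u$ term without loss.
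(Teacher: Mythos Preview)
Your proposal is correct and follows essentially the same approach as the paper: apply Lemma \ref{lem02} to pass from $u$ with mixed boundary conditions to $w=D_1u$ with pure Dirichlet conditions, invoke the De Giorgi--Nash--Moser estimate, and then handle $\lambda>0$ by Agmon's trick as in Lemma \ref{lem44}. The paper's own proof is in fact terser than yours, merely citing Lemma \ref{lem02}, stating the resulting H\"older bound, and then referring back to Lemma \ref{lem44} for the $\lambda>0$ reduction.
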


\begin{proof}
We first prove the case $\lambda = 0$.
By Lemma \ref{lem02},
$w:= D_1u$ is in $\cH^1_{2}(Q_{3/2}^k)$ and satisfies
$$
-w_t + \cL w  = 0
$$
in $Q_{3/2}^k$ with the Dirichlet boundary condition $w = 0$ on $(-9/4,0) \times \left(B_{3/2} \cap \partial \Xi^k \right)$.
Then by the De Giorgi--Nash--Moser estimate we have
$$
[w]_{C^\alpha(Q_1^+)} \le N \|w\|_{L_2(Q_2^+)}
$$
for some $\alpha = \alpha(d,\delta) \in (0,1)$ and $N=N(d,\delta)$.
For the case $\lambda > 0$, we follow the same steps as in the proof of Lemma \ref{lem44}.
\end{proof}

The following is a consequence of an interior De Giorgi--Nash--Moser H\"{o}lder estimate and the standard difference quotient argument.

\begin{lemma}
								\label{lem05}
Let $\lambda \ge 0$, $k \in [1,d]$ be an integer, and $Q_2(t_0, x_0) \subset \bR \times \Xi^k$.
Suppose that $u \in \cH^1_2(Q_2(t_0, x_0))$ satisfies
$$
-u_t + \cL u -\lambda u = 0
$$
in $Q_2(t_0, x_0)$.
Then there exist constants $\alpha = \alpha(d,\delta) \in (0,1)$
and $N = N(d,\delta)$ such that
$$
[D_i u]_{C^\alpha(Q_1(t_0, x_0))}
\le N \|D_i u\|_{L_2(Q_2(t_0, x_0))},
\quad
i = 1, \ldots, d-1.
$$
\end{lemma}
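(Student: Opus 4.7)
The plan is to follow the same two-step scheme already used in Lemmas \ref{lem44} and \ref{lem04}: reduce to the case $\lambda=0$ by Agmon's trick, then take a difference quotient in one of the translation-invariant directions and invoke the interior De Giorgi--Nash--Moser H\"older estimate. Since $Q_2(t_0,x_0)\subset \bR\times\Xi^k$, this is a purely interior estimate and no boundary conditions on $u$ enter the argument.

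First I would reduce to $\lambda=0$. Set
$$
v(t,x,z):=u(t,x)\cos(\sqrt{\lambda}\,z)
$$
on the enlarged cylinder $(t_0-4,t_0)\times \fB_2(x_0,0)\subset\bR^{d+2}$, where $\fB_r(x_0,0):=\{(x,z)\in\bR^{d+1}:|x-x_0|^2+z^2<r^2\}$. Then $v$ satisfies $-v_t+\hat{\cL}v=0$ with
$$
\hat{\cL}v=\sum_{i,j=1}^d D_i(a^{ij}D_jv)+D_{zz}v,
$$
whose coefficients depend only on $(t,x_d)$ (the $z$-coefficient is the constant $1$) and satisfy \eqref{eq5.21} with the same ellipticity constant $\delta$. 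Since $D_iu(\cdot,\cdot)=D_iv(\cdot,\cdot,0)$ for $i\le d$, it suffices to prove the estimate for the operator $\hat{\cL}$ in the case $\lambda=0$; the $L_2$-norm on the right grows only by a fixed multiplicative constant coming from the $\cos(\sqrt{\lambda}z)$ factor.

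For $\lambda=0$, the coefficients $a^{ij}$ are independent of $x_1,\dots,x_{d-1}$. Hence, for any $i\in\{1,\dots,d-1\}$ and any small increment $h$, the translated function $u(t,x+he_i)$ also satisfies $-u_t+\cL u=0$ in a slightly smaller subcylinder. The standard difference-quotient argument then yields $w:=D_iu\in\cH_2^1(Q_{3/2}(t_0,x_0))$ and
$$
-w_t+\cL w=0
$$
in $Q_{3/2}(t_0,x_0)$. Applying the interior De Giorgi--Nash--Moser H\"older estimate for divergence form parabolic equations with bounded measurable coefficients to $w$, we obtain
$$
[w]_{C^\alpha(Q_1(t_0,x_0))}\le N\|w\|_{L_2(Q_{3/2}(t_0,x_0))}\le N\|D_iu\|_{L_2(Q_2(t_0,x_0))}
$$
for some $\alpha=\alpha(d,\delta)\in(0,1)$ and $N=N(d,\delta)$, which is the desired inequality.

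I do not expect any serious obstacle: both ingredients (Agmon's reduction and the difference quotient in translation-invariant directions) have already been deployed in the proofs of Lemmas \ref{lem44} and \ref{lem04}, and here the absence of boundaries makes the execution even simpler. The one structural point worth flagging is that the $x_d$-direction must be excluded because $a^{ij}$ genuinely depend on $x_d$, which is exactly why the conclusion is restricted to $i=1,\dots,d-1$.
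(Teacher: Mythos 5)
Your proposal is correct and follows exactly the route the paper indicates for this lemma (which it states as "a consequence of an interior De Giorgi--Nash--Moser H\"older estimate and the standard difference quotient argument"): Agmon's reduction to $\lambda=0$, difference quotients in the directions $x_1,\dots,x_{d-1}$ in which the coefficients are constant, and the interior De Giorgi--Nash--Moser estimate. Your remark that the $x_d$-direction must be excluded because the coefficients depend on $x_d$ is also the right structural point.
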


Using the classical $L_2$-estimates and the H\"{o}lder estimates proved above, we obtain the following mean oscillation estimate for $D_1 u$ when $\cL$ belongs to $\bL_2$.

\begin{lemma}
							\label{lem5.6}
Let $\cL \in \bL_2$, $\lambda > 0$, 
$k\in [1,d]$ be an integer,
and $f=(f_1,\ldots,f_d), g \in L_{2,\text{loc}}(\bR \times \overline{\Xi^k})$.
Suppose that $u \in \cH_{2,\text{loc}}^1(\bR \times \overline{\Xi^k})$ satisfies
$$
- u_t+\cL u -\lambda u= \Div f + g
$$
locally in $\bR \times \Xi^k$
with the conormal derivative boundary condition $a^{11}D_1u = f_1$ on $\bR \times \Gamma^{k,1}$ and the Dirichlet boundary condition $u=0$ on $\bR\times \Gamma^{k,i}$, $i=2,\ldots,k$.
Then, for any $r> 0$, $\kappa \ge 32d$, and $(t_0, x_0) \in \bR \times\overline{\Xi^k}$,
we have
\begin{align}
                            \label{eq9.39}
&\dashint_{Q_r^k(t_0,x_0)}
\dashint_{Q_r^k(t_0,x_0)}
\left| D_1 u(t,x) - D_1 u(s,y) \right|^2 \, dx \, dt \, dy \, ds\nonumber\\
&\le N \kappa^{-2\alpha} \dashint_{Q_{\kappa r}^k(t_0,x_0)} |D_1u|^2 \, dx \, dt
+ N \kappa^{d+2} \dashint_{Q_{\kappa r}^k(t_0,x_0)} \left(|f|^2 + \lambda^{-1}|g|^2\right) \, dx \, dt,
\end{align}
where $\alpha=\alpha(d,\delta) \in (0,1)$ and $N=N(d,\delta)>0$.
\end{lemma}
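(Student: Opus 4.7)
The plan is to decompose $u = v + w$ where $w \in \cH_2^1(\bR \times \Xi^k)$ is a globally defined auxiliary function absorbing the inhomogeneous data $(f, g)$ via Theorem~\ref{thm0411_1}, and $v := u - w$ solves the homogeneous equation locally so that its first derivative $D_1 v$ enjoys a H\"older estimate. The mean oscillation bound in \eqref{eq9.39} then follows by controlling the two pieces separately and combining via the triangle inequality.

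First, I would apply Theorem~\ref{thm0411_1} to find $w \in \cH_2^1(\bR \times \Xi^k)$ solving
\[
-w_t + \cL w - \lambda w = \Div\bigl(f \mathbf{1}_{Q_{\kappa r}^k(t_0, x_0)}\bigr) + g \mathbf{1}_{Q_{\kappa r}^k(t_0, x_0)}
\]
in $\bR \times \Xi^k$ with the conormal condition $a^{11} D_1 w = f_1 \mathbf{1}_{Q_{\kappa r}^k(t_0, x_0)}$ on $\bR \times \Gamma^{k,1}$ and Dirichlet conditions $w = 0$ on $\bR \times \Gamma^{k,i}$, $i = 2, \ldots, k$. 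The a priori estimate gives $\|Dw\|_{L_2(\bR \times \Xi^k)}^2 \le N(\|f\|_{L_2(Q_{\kappa r}^k(t_0, x_0))}^2 + \lambda^{-1}\|g\|_{L_2(Q_{\kappa r}^k(t_0, x_0))}^2)$, and combining with the volume comparison $|Q_{\kappa r}^k(t_0, x_0)|/|Q_r^k(t_0, x_0)| \le N \kappa^{d+2}$ (valid since $\Xi^k$ satisfies a doubling property) yields
\[
\dashint_{Q_r^k(t_0, x_0)} |D_1 w|^2 \,dx\,dt \le N \kappa^{d+2} \dashint_{Q_{\kappa r}^k(t_0, x_0)} \bigl(|f|^2 + \lambda^{-1}|g|^2\bigr) \,dx\,dt,
\]
which accounts for the second term of \eqref{eq9.39}. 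The difference $v := u - w$ solves the homogeneous equation on $Q_{\kappa r}^k(t_0, x_0)$ with homogeneous versions of the same mixed boundary conditions.

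Next, I would establish the H\"older estimate
\[
[D_1 v]_{C^\alpha(Q_{\kappa r/2}^k(t_0, x_0))}^2 \le N(\kappa r)^{-2\alpha - (d+2)} \|D_1 v\|_{L_2(Q_{\kappa r}^k(t_0, x_0))}^2
\]
with $\alpha = \alpha(d, \delta) \in (0, 1)$. Since $\cL \in \bL_2$, Lemma~\ref{lem02} applied to $v$ shows that $D_1 v$ satisfies a divergence form equation with the Dirichlet condition $D_1 v = 0$ on the portion of $\partial \Xi^k$ inside $Q_{\kappa r}^k(t_0, x_0)$. The interior and boundary De Giorgi--Nash--Moser H\"older estimates for divergence-form solutions with Dirichlet data (the same tool underlying Lemmas~\ref{lem44}, \ref{lem04}, and \ref{lem05}) then give the desired bound after scaling: either $B_{\kappa r/8}(x_0) \subset \Xi^k$, in which case the interior Hölder estimate applies directly, or one translates to a reference point $y_0 \in \partial \Xi^k$ with $|y_0 - x_0| \le \kappa r /8$ and invokes the boundary Hölder estimate in the possibly lower-order sub-wedge $\Xi^{|I|}$ determined by the active faces $I = \{i : (y_0)_i = 0\} \subset \{1, \ldots, k\}$.

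Combining the two ingredients: since $|t-s|^{\alpha/2} + |x-y|^\alpha \le N r^\alpha$ for $(t,x),(s,y) \in Q_r^k(t_0, x_0)$, the H\"older bound gives
\[
\dashint_{Q_r^k}\dashint_{Q_r^k} |D_1 v(t,x) - D_1 v(s,y)|^2 \,dx\,dt\,dy\,ds \le N \kappa^{-2\alpha} \dashint_{Q_{\kappa r}^k(t_0, x_0)} |D_1 v|^2 \,dx\,dt,
\]
and applying $|D_1 v|^2 \le 2(|D_1 u|^2 + |D_1 w|^2)$ together with the Step~1 bound for $w$ produces the first term of \eqref{eq9.39}; the contribution of $D_1 w$ to the mean oscillation is bounded trivially by $4\dashint_{Q_r^k} |D_1 w|^2$, already dominated by Step~1. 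The principal difficulty lies in Step~2: when $(t_0, x_0)$ is near a high-codimension edge of $\Xi^k$, the translated wedge at $y_0$ drops to a lower-order configuration, and one must carefully permute coordinates so as to preserve both the structural assumption \eqref{eq5.22} (which distinguishes the $x_d$ direction) and the class $\bL_2$, in order to invoke the boundary Hölder estimate in the right form.
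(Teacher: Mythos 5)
Your decomposition $u=v+w$, with $w$ produced by Theorem \ref{thm0411_1} from the localized data and $v$ handled through a H\"older estimate for $D_1v$ ultimately resting on Lemmas \ref{lem02}, \ref{lem44}, \ref{lem04}, and \ref{lem05}, is exactly the paper's argument; the $L_2$ bounds for $D_1w$ over $Q_r^k$ and over $Q_{\kappa r}^k$ and the final triangle-inequality combination also match (your indicator cutoff in place of the paper's smooth $\eta$ is immaterial in the weak formulation). The one step you flag but do not close -- the geometry near edges -- is indeed the delicate point, and your stated dichotomy (either $B_{\kappa r/8}(x_0)\subset\Xi^k$, or translate to a boundary point $y_0$ and work in the sub-wedge of faces active at $y_0$) does not literally suffice: if, say, $x_{0,1}=0$ while $x_{0,2}$ is a fixed fraction of $\kappa r$, any cylinder of radius comparable to $\kappa r$ about the translated point meets face $2$ as well, yet face $2$ is not active at $y_0$, so neither the interior estimate nor the boundary estimate in $\Xi^{\{1\}}$ applies at that scale. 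The paper resolves this by normalizing $\kappa r=8d$ via dilation and replacing round cylinders by anisotropic boxes $\hat Q$ whose half-width in the $i$-th coordinate is small enough to avoid face $i$ when $x_{0,i}\ge 1$ and large enough to reach face $i$ when $x_{0,i}<1$; on such a box $v$ falls into exactly one of the three boundary configurations of Lemmas \ref{lem04}, \ref{lem44}, and \ref{lem05} (the coordinate reordering is harmless because whenever face $1$ is among the distant faces the direction $x_1$ becomes tangential to all nearby faces and the first assertion of Lemma \ref{lem44} needs no $\bL_2$ structure), and a covering and scaling argument then gives the seminorm bound $[D_1v]_{C^\alpha(\hat Q_2)}\le N\|D_1v\|_{L_2(\hat Q_4)}$. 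With that device inserted in your Step 2, the proof is complete and coincides with the paper's.
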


\begin{proof}
By a dilation argument, it is enough to prove the lemma only for $r = 8d/\kappa$.
Fix $(t_0,x_0) \in \bR \times \overline{\Xi^k}$.
Due to the possibility of shifting the coordinates, we may assume that $t_0 = 0$ and
$$
x_0 = (x_{0,1}, x_{0,2}, \ldots, x_{0,d}) = (x_{0,1},\ldots,x_{0,k}, 0, \ldots, 0).
$$
For $i=1,\ldots,d$, let
$$
y_{0,i}=\left\{
          \begin{array}{cl}
            x_{0,i} & \hbox{if $x_{0,i}\ge 1$,} \\
            0 & \hbox{otherwise,}
          \end{array}
        \right.
\quad r_i=\left\{
            \begin{array}{cl}
              1/8 & \hbox{if $x_{0,i}\ge 1$,} \\
              1  & \hbox{otherwise.}
            \end{array}
          \right.
$$
Then define $\hat{Q}_r(t_0,y_0)=(-r^2,0)\times \Theta_r(y_0)$, where
$$
\Theta_r(y_0) = \prod_{i=1}^k \big((y_{0,i}-rr_i)_+,y_{0,i}+ r r_i\big) \times \prod_{i=k+1}^d (-r r_i, r r_i).
$$
Clear $y_0=(y_{0,1},\ldots,y_{0,d})\in \overline{\Xi^k}$. Since $\kappa r=8d$ and $\kappa\ge 32d$,
we see that
$$
Q_r^k(t_0,x_0)\subset \hat Q_2(t_0,y_0)
\subset \hat Q_6(t_0,y_0)\subset Q^k_{\kappa r}(t_0,x_0).
$$

Take an infinitely differentiable function $\eta(t,x)$ defined on $\bR^{d+1}$ such that
$$
\eta = 1
\quad
\text{on}
\,\,
\hat{Q}_4(t_0, y_0),
\quad
\eta = 0
\quad
\text{on}
\,\,
\bR^{d+1} \setminus (-36,36) \times C_6(y_0).
$$
Here we denote
$$
C_r(y_0) = \prod_{i=1}^k \big(y_{0,i}-rr_i,y_{0,i}+rr_i\big) \times \prod_{i=k+1}^d (-rr_i, rr_i).
$$
Using Theorem \ref{thm0411_1}, we find a unique $w \in \cH_2^1(\bR_0 \times \Xi_k)$ to the equation
$$
-w_t + \cL w - \lambda w = \Div (\eta f) + \eta g
$$
in $\bR_0 \times \Xi^k$ with the conormal derivative boundary condition $a^{11}D_1u = \eta f_1$ on $\bR_0 \times \Gamma^{k,1}$ and the Dirichlet boundary condition $u = 0$ on $\bR_0 \times \Gamma^{k,i}$, $i = 2, \ldots, k$.
Moreover, the function $w$ satisfies
$$
\|Dw\|_{L_2(\bR_0 \times \Xi_k)}
\le N \|\eta f\|_{L_2(\bR_0 \times \Xi_k)}
+ N\lambda^{-1/2}\|\eta g\|_{L_2(\bR_0 \times \Xi_k)},
$$
where $N=N(d,\delta)$. From this we obtain
\begin{equation}
							\label{eq0423_2}
\dashint_{Q_r^k(t_0,x_0)}|D_1w|^2 \, dx \, dt
\le N \kappa^{d+2} \dashint_{Q_{\kappa r}^k(t_0,x_0)} \left(|f|^2+\lambda^{-1}|g|^2\right) \, dx \, dt,
\end{equation}
\begin{equation}
							\label{eq0423_3}
\dashint_{Q_{\kappa r}^k(t_0,x_0)}|D_1w|^2\, dx \, dt
\le N \dashint_{Q_{\kappa r}^k(t_0,x_0)} \left(|f|^2+\lambda^{-1}|g|^2\right) \, dx \, dt.
\end{equation}

Set $v:= u - w \in \cH_{2, \text{loc}}^1(\bR \times \overline{\Xi^k})$. Then $v$ satisfies
$$
-v_t + \cL v - \lambda v = 0
$$
in $\hat Q_4(t_0,y_0)$  with the conormal derivative boundary condition $a^{11}D_1v = 0$ on $ (-16,0) \times \Gamma_4^{k,1}$ if $x_{0,1}<1$, and the Dirichlet boundary condition $v = 0$ on $ (-16,0) \times \Gamma_4^{k,i}$  if $x_{0,i} < 1$, $i = 2, \ldots, k$.
Depending on the location of $(t_0, x_0)$, we notice that $v$ satisfies appropriate boundary conditions stated in Lemma \ref{lem44}, Lemma \ref{lem04} or Lemma \ref{lem05}.
For example, if $x_{0,i} < 1$ for $i=1,2$ and $x_{0,i} \ge 1$ for $i=3,\ldots,k$, then $v$ satisfies the boundary conditions in Lemma \ref{lem04} with $k=2$.
If $x_{0,1} \ge 1$ and $x_{0,i} < 1$ for $i=2,\cdots,k$, then by reordering the coordinates $(x_1,x_2,\ldots,x_k)\to (x_2,\ldots,x_k,x_1)$, $v$ satisfies the Dirichlet boundary conditions in Lemma \ref{lem44} with $k-1$ in place of $k$.
If $x_{0,i} \ge 1$ for all $i=1,\cdots,k$, we do not consider any boundary conditions as in Lemma \ref{lem05}.
Lemmas \ref{lem44}, \ref{lem04}, and \ref{lem05} together with a covering argument as well as a scaling argument prove that
$$
[D_1 v]_{C^\alpha(\hat Q_2(t_0,y_0))}
\le N \|D_1v\|_{L_2(\hat Q_4(t_0,y_0))}.
$$
Then we note that
\begin{multline}
							\label{eq0423_1}
\dashint_{Q_r^k(t_0,x_0)}
\dashint_{Q_r^k(t_0,x_0)}
\left| D_1 v(t,x) - D_1 v(s,y) \right|^2 \, dx \, dt \, dy \, ds
\\
\le N r^{2\alpha} [D_1 v]_{C^\alpha(\hat Q_2(t_0,y_0))}^2
\le N \kappa^{-2\alpha} \dashint_{Q_{\kappa r}^k(t_0,x_0)} |D_1 v|^2 \, dx \, dt,
\end{multline}
where in the last inequality we used the fact that $r = 8d/\kappa$.
Since $u=v+w$, we have
\begin{align*}
&\dashint_{Q_r^k(t_0,x_0)}
\dashint_{Q_r^k(t_0,x_0)}
\left| D_1 u(t,x) - D_1 u(s,y) \right|^2 \, dx \, dt \, dy \, ds\\
&\le \dashint_{Q_r^k(t_0,x_0)}
\dashint_{Q_r^k(t_0,x_0)}
\left| D_1 v(t,x) - D_1 v(s,y) \right|^2 \, dx \, dt \, dy \, ds\\
&\quad+ N \dashint_{Q_r^k(t_0,x_0)} |D_1 w|^2 \, dx \, dt \, dy \, ds :=I_1 + I_2.
\end{align*}
For $I_1$,  using $u=v+w$ again, from \eqref{eq0423_1} and \eqref{eq0423_3} we obtain
$$
I_1 \le N\kappa^{-2\alpha} \dashint_{Q_{\kappa r}^k(t_0,x_0)} |D_1u|^2 \, dx \, dt
+ N\kappa^{-2\alpha} \dashint_{Q_{\kappa r}^k(t_0,x_0)} \left(|f|^2+\lambda^{-1}|g|^2\right) \, dx \, dt.
$$
For $I_2$, we use \eqref{eq0423_2}.
By combining the estimates for $I_1$ and $I_2$, we arrive at the desired estimate \eqref{eq9.39}.
\end{proof}

Proceeding as in the proof of Lemma \ref{lem5.6}, from Theorem \ref{thm0411_1} and Lemmas \ref{lem44} and \ref{lem05} we derive the following lemma, where $\cL$ is not necessarily in $\bL_1$ or $\bL_2$.

\begin{lemma}
                                        \label{lem5.34}
Let $\lambda > 0$, $k\in [1,d]$ be an integer,
and $f=(f_1,\ldots,f_d), g \in L_{2,\text{loc}}(\bR \times\overline{\Xi^k})$.
Suppose that $a^{dj}=0$ for $j=1,2,\ldots,l$, where $l = \min\{k,d-1\}$, and $u \in \cH_{2, \text{loc}}^1(\bR \times \overline{\Xi^k})$ satisfies
\begin{equation}
                        \label{eq6.01}
-u_t+\cL u -\lambda u = \Div f + g
\end{equation}
locally in $\bR \times \Xi^k$
with the Dirichlet boundary condition on $\bR\times \partial \Xi^k$.
Then, for any $r> 0$, $\kappa \ge 32d$, and $(t_0, x_0) \in \bR \times\overline{\Xi^k}$,
we have \eqref{eq9.39} with $D_iu$, $i=1,\cdots,d-1$, in place of $D_1u$.
\end{lemma}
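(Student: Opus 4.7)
The plan is to mirror the proof of Lemma~\ref{lem5.6} \emph{mutatis mutandis}, using the pure Dirichlet boundary condition throughout and replacing $D_1$ by $D_i$ for any fixed $i \in \{1,\ldots,d-1\}$. By a dilation, it suffices to establish the estimate for $r = 8d/\kappa$. Fix $(t_0,x_0) \in \bR \times \overline{\Xi^k}$, translate to $t_0 = 0$ and $x_0 = (x_{0,1},\ldots,x_{0,k},0,\ldots,0)$, and define the point $y_0$, the radii $r_i$, the boxes $\hat Q_r(t_0,y_0)$, $C_r(y_0)$, $\Theta_r(y_0)$, and the cut-off $\eta$ exactly as in the proof of Lemma~\ref{lem5.6}, so that the same inclusions $Q_r^k(t_0,x_0) \subset \hat Q_2(t_0,y_0) \subset \hat Q_6(t_0,y_0) \subset Q_{\kappa r}^k(t_0,x_0)$ hold.

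Next, invoke Theorem~\ref{thm0411_1} with the pure Dirichlet boundary condition to obtain a unique $w \in \cH_2^1(\bR_0 \times \Xi^k)$ satisfying
\[
-w_t + \cL w - \lambda w = \Div(\eta f) + \eta g \quad \text{in } \bR_0 \times \Xi^k, \qquad w = 0 \ \text{on } \bR_0 \times \partial \Xi^k,
\]
together with $\|Dw\|_{L_2(\bR_0 \times \Xi^k)} \le N\|\eta f\|_{L_2} + N\lambda^{-1/2}\|\eta g\|_{L_2}$. This yields the analogues of \eqref{eq0423_2} and \eqref{eq0423_3} with $D_iw$ in place of $D_1w$. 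Setting $v := u - w$, the function $v$ satisfies the homogeneous equation in $\hat Q_4(t_0,y_0)$ and the Dirichlet condition $v = 0$ on exactly those faces $(-16,0) \times \Gamma_4^{k,j}$ that $\hat Q_4(t_0,y_0)$ touches, namely those $j \in \{1,\ldots,k\}$ with $x_{0,j} < 1$.

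The main step is the H\"older estimate
\[
[D_i v]_{C^\alpha(\hat Q_2(t_0,y_0))} \le N \|D_i v\|_{L_2(\hat Q_4(t_0,y_0))},
\]
which I would establish by first permuting $x_1,\ldots,x_k$ so that the faces $v$ touches correspond to the first $k'$ coordinates, where $k' \le k$. This permutation preserves the hypothesis $a^{dj}=0$ for $j=1,\ldots,l$, since it reshuffles only entries within the first $k \ge l$ columns of the $d$-th row, all of which vanish. After relabeling, if the new index $i'$ corresponding to $i$ satisfies $i' \ge k'+1$ (or if $k'=0$, the purely interior case), then the first assertion of Lemma~\ref{lem44} applied with $k'$ in place of $k$ (respectively, Lemma~\ref{lem05}) gives the H\"older estimate. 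If instead $i' \in \{1,\ldots,k'\}$, then the second assertion of Lemma~\ref{lem44} applies; here $l' := \min\{k',d-1\} \le l$, so the required vanishing of $a^{dj}$ for $j \le l'$ is inherited, and this is precisely where the hypothesis on $a^{dj}$ is used. Finally, writing $u = v + w$ and combining the H\"older estimate for $D_iv$ with the $L_2$ bounds for $D_iw$ produces \eqref{eq9.39} with $D_iu$ in place of $D_1u$, exactly as in the concluding computation of the proof of Lemma~\ref{lem5.6}. The main obstacle is the corner-case analysis of $v$, but the assumption $a^{dj}=0$, $j \le l$, is tailored precisely so that every wall-direction derivative is covered by the second assertion of Lemma~\ref{lem44}.
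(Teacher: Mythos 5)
Your proposal is correct and is essentially the paper's own argument: the paper proves this lemma only by the one-line remark that one proceeds as in Lemma \ref{lem5.6}, using Theorem \ref{thm0411_1} (with pure Dirichlet data) and Lemmas \ref{lem44} and \ref{lem05}, which is exactly the decomposition, corner-case analysis, and use of both assertions of Lemma \ref{lem44} that you spell out. Your observation that the permutation of the touched wall coordinates preserves the hypothesis $a^{dj}=0$, $j\le l$, correctly identifies the only point needing care.
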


\subsection{$\cH^1_p$-estimates}
We consider a filtration of dyadic cubes $\{\bC_l, l \in \bZ\}$ in $\bR \times \Xi^k$, where $\bZ = \{0, \pm 1, \pm 2, \ldots\}$.
The set $\bC_l$ is the collection of dyadic parabolic cubes in $\bR \times \Xi^k$ of the form
$$
(i_0 2^{-2l}, (i_0+1)^{-2l}] \times (i_12^{-l}, (i_1+1)2^{-l}]
\times \ldots \times (i_d2^{-l}, (i_d+1)2^{-l}],
$$
where $i_0, i_1, \ldots, i_d \in \bZ$ and $i_1, \ldots, i_k \ge 0$.
Let $\cC$ be the collection of the dyadic cubes in $\bC_l$ for all $l \in \bZ$.
If $(t_1,x_1) \in C \in \cC$, then there exist the smallest $r>0$ and $(t_0,x_0) \in \bR \times\overline{\Xi^k}$ such that $C \subset Q^k_r(t_0,x_0)$ and
\begin{multline}
							\label{eq1213}
\dashint_C\dashint_C |f(t,x) - f(s,y)| \, dx \, dy \, dt \, ds
\\
\le N(d) \dashint_{Q_r^k(t_0,x_0)}\dashint_{Q_r^k(t_0,x_0)} |f(t,x) - f(s,y)| \, dx \, dy \, dt \, ds.
\end{multline}
On the other hand, for any $(t_0,x_0) \in \bR \times \overline{\Xi^k}$ and $r>0$,
if $C \in \cC$ is the smallest cube containing $Q_r^k(t_0,x_0)$, then
\begin{equation}
							\label{eq1214}
\dashint_{Q_r^k(t_0,x_0)} |f(t,x)| \, dx \, dt
\le N(d) \dashint_C |f(t,x)| \, dx \, dt.
\end{equation}

For a function $f \in L_{1,\text{loc}}(\bR \times \Xi^k)$, the maximal and sharp functions of $f$ in our context are given by
\begin{align*}
\cM f(t,x) &= \sup_{(t,x) \in C, C \in \cC} \dashint_C |f(s,y)| \, dy \, ds,\\
f^{\#}(t,x) &= \sup_{(t,x) \in C, C \in \cC} \dashint_C \dashint_C|f(s,y)-f(r,z)| \, dy \, dz \, ds \, dr.
\end{align*}
For $p \in (1,\infty)$, by the Fefferman--Stein theorem on sharp functions and the Hardy--Littlewood maximal function theorem, we have
\begin{equation}
							\label{eq1215}
\|f\|_{L_p(\bR \times \Xi^k)} \le N(d,p) \|f^{\#}\|_{L_p(\bR \times \Xi^k)},
\end{equation}
\begin{equation}
							\label{eq1216}
\|\cM f\|_{L_p(\bR \times \Xi^k)} \le N(d,p) \|f\|_{L_p(\bR \times \Xi^k)}.
\end{equation}

Notice that, in the proposition below, the functions $f$ and $g$ have compact supports in $\bR \times \Xi^k$.

\begin{proposition}
                                    \label{prop5.8}
Let $\cL \in \bL_2$, $\lambda > 0$, $p\in (2,\infty)$, $k\in [1,d]$ be an integer, $f=(f_1,\ldots,f_d), g \in C_0^{\infty}(\bR \times \Xi^k)$.
Then there exists a unique solution $u\in \cH^1_2(\bR\times \Xi^k)$ to the equation \eqref{eq6.01} with the conormal derivative boundary condition $a^{11} D_1 u = f_1$ on $\bR \times \Gamma^{k,1}$ and the Dirichlet boundary condition $u=0$ on $\bR\times \Gamma^{k,i}$, $i=2,\ldots,k$. Moreover, we have \begin{equation}
                                            \label{eq6.00b}
\|D_1 u\|_{L_p(\bR \times  \Xi^k)}
\le N \|f\|_{L_p(\bR \times \Xi^k)} + N \lambda^{-1/2} \|g\|_{L_p(\bR \times \Xi^k)},
\end{equation}
where $N=N(d,\delta,p)$.
\end{proposition}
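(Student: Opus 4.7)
\medskip
\noindent\textbf{Proof plan.}
Existence and uniqueness of $u\in \cH_2^1(\bR\times\Xi^k)$ with the stated boundary conditions is immediate from Theorem \ref{thm0411_1} applied on $\bR_T\times\Xi^k$ with $T=\infty$. The whole substance of the proposition is the $L_p$-bound \eqref{eq6.00b}. The plan is to run the Fefferman--Stein/Hardy--Littlewood argument of Krylov on the dyadic filtration $\cC$ introduced above, with the mean oscillation estimate of Lemma \ref{lem5.6} as the key local ingredient. Throughout we operate under the working hypothesis that $D_1 u\in L_p(\bR\times\Xi^k)$; this has to be justified separately at the end.

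First I would translate the mean oscillation bound \eqref{eq9.39} into a pointwise sharp function bound. Fix $(t_1,x_1)\in\bR\times\Xi^k$ and a dyadic cube $C\in\cC$ containing it. By \eqref{eq1213} there is a parabolic cube $Q_r^k(t_0,x_0)\supset C$ of comparable size, and Lemma \ref{lem5.6} gives, after Cauchy--Schwarz,
\begin{align*}
\dashint_C\dashint_C |D_1 u(t,x)-D_1 u(s,y)|\,dx\,dt\,dy\,ds
&\le N\kappa^{-\alpha}\bigl(\cM(|D_1 u|^2)(t_1,x_1)\bigr)^{1/2}\\
&\quad+ N\kappa^{(d+2)/2}\bigl(\cM(|f|^2+\lambda^{-1}|g|^2)(t_1,x_1)\bigr)^{1/2},
\end{align*}
where I used \eqref{eq1214} to pass from averages over $Q_{\kappa r}^k(t_0,x_0)$ to maximal functions over $\cC$. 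Taking the supremum over $C\ni (t_1,x_1)$ in $\cC$ yields
\[
(D_1 u)^{\#}(t_1,x_1)\le N\kappa^{-\alpha}\bigl(\cM(|D_1 u|^2)(t_1,x_1)\bigr)^{1/2}+N\kappa^{(d+2)/2}\bigl(\cM(|f|^2+\lambda^{-1}|g|^2)(t_1,x_1)\bigr)^{1/2}.
\]

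Next I would apply \eqref{eq1215} and then \eqref{eq1216} with exponent $p/2>1$, which is where the restriction $p>2$ enters. This gives
\[
\|D_1 u\|_{L_p(\bR\times\Xi^k)}\le N\kappa^{-\alpha}\|D_1 u\|_{L_p(\bR\times\Xi^k)}+N\kappa^{(d+2)/2}\bigl(\|f\|_{L_p(\bR\times\Xi^k)}+\lambda^{-1/2}\|g\|_{L_p(\bR\times\Xi^k)}\bigr).
\]
Choosing $\kappa=\kappa(d,\delta,p)$ large enough so that $N\kappa^{-\alpha}\le 1/2$ absorbs the first term on the right into the left and yields \eqref{eq6.00b}.

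The main obstacle is the \emph{a priori} requirement $D_1 u\in L_p(\bR\times\Xi^k)$, without which the absorption step above is meaningless. To handle this I would argue by approximation: because $f,g\in C_0^\infty(\bR\times\Xi^k)$, the $L_2$-solution $u$ is smooth in the interior and is smooth up to any face in the complement of the lower-dimensional edges (by standard interior and flat-boundary regularity for divergence form parabolic equations with coefficients independent of the tangential directions). Combined with the exponential decay in $t$ and $x$ coming from $\lambda>0$ (e.g.\ via Agmon-type energy estimates, in the spirit of the trick used in Lemma \ref{lem44}), one obtains $D_1 u \in L_p$ on the complement of a tubular neighborhood of the edges; the edges have codimension at least two, so a cut-off/mollification argument regularizes near them. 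Alternatively, one can first establish \eqref{eq6.00b} for solutions of the regularized equation with coefficients $a^{ij}_\varepsilon$ (as in the proofs of Lemmas \ref{lem02} and \ref{lem01}), for which $u^\varepsilon\in W_2^{1,2}$ and hence $D_1 u^\varepsilon$ lies in $L_p$ on bounded subdomains, then pass to the limit $\varepsilon\to 0$ using the uniformity of the constant $N$ in \eqref{eq6.00b}. Either way, once the estimate is established for this approximating family, the closing argument above delivers \eqref{eq6.00b} in full generality.
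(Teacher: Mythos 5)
Your mean-oscillation/sharp-function argument is exactly the paper's: Lemma \ref{lem5.6} plus \eqref{eq1213}--\eqref{eq1214} give the pointwise bound on $(D_1u)^{\#}$, then \eqref{eq1215}, \eqref{eq1216} and absorption for large $\kappa$ yield \eqref{eq6.00b}. The existence/uniqueness part via Theorem \ref{thm0411_1} is also the same. So the only issue is the step you yourself flag as the main obstacle, namely the a priori qualitative membership $D_1u\in L_p(\bR\times\Xi^k)$, and there your proposal has a genuine gap. Option (a) starts from a false premise: the coefficients are merely measurable in $(t,x_d)$, so the $L_2$-solution is not smooth in the interior --- De Giorgi--Nash--Moser gives only $C^\alpha$ regularity of $u$, not of $Du$, and boundedness of $D_1u$ near the edges is precisely the nontrivial point, not something that follows because ``the edges have codimension at least two.'' Option (b) also does not close: even for the mollified coefficients, $u^\varepsilon\in W_2^{1,2}$ gives $Du^\varepsilon\in L_q$ locally only for $q$ up to the parabolic Sobolev exponent, not for every $p\in(2,\infty)$, and in any case the Fefferman--Stein step needs $D_1u\in L_p$ \emph{globally} on $\bR\times\Xi^k$, so you would face the identical a priori integrability problem for $u^\varepsilon$ before you could ``establish \eqref{eq6.00b} for the regularized equation.''

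The paper's resolution is short and worth knowing: since $f,g\in C_0^\infty(\bR\times\Xi^k)$, one has $\Div f+g\in C_0^\infty(\bR\times\Xi^k)$ and $f_1=0$ on $\bR\times\Gamma^{k,1}$, so Lemma \ref{lem02} (applied with $\varphi\equiv 1$ on the whole of $\bR\times\Xi^k$) shows that $w:=D_1u\in\cH_2^1(\bR\times\Xi^k)$ solves $-w_t+\cL w-\lambda w=D_1(\Div f+g)$ with the Dirichlet condition $w=0$ on all of $\bR\times\partial\Xi^k$. The De Giorgi--Nash--Moser estimate for this Dirichlet problem with smooth, compactly supported data then gives $D_1u\in L_\infty(\bR\times\overline{\Xi^k})$, and since $D_1u\in L_2$ as well, interpolation gives $D_1u\in L_p$ for every $p\in(2,\infty)$. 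In other words, the structural Lemma \ref{lem02}, which you already use implicitly through Lemma \ref{lem5.6}, is also the tool that supplies the missing qualitative integrability; invoking it here would repair your proof.
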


\begin{proof}
The first part of the proposition is due to Theorem \ref{thm0411_1}.
For the second part, we note that $\Div f+g\in C_0^\infty(\bR\times\Xi^k)$ and $a^{11}D_1 u=f_1=0$ on $\bR\times\Gamma^{k,1}$.
Thus, by Lemma \ref{lem02} (in this case we use the domain $\bR \times \Xi^k$ and $\varphi \equiv 1$ in the proof of Lemma \ref{lem02}) $w := D_1 u$ is in $\cH_2^1(\bR \times \Xi^k)$ and satisfies
$$
- w + \cL w - \lambda w = D_1 \left( \Div f + g \right)
$$
in $\bR \times \Xi^k$ with the Dirichlet boundary condition $w = 0$ on $\bR \times \partial \Xi^k$. Then the De Giorgi--Nash--Moser estimate implies that
$D_1 u\in L_\infty(\bR \times\overline{\Xi^k})$. In particular, we have $D_1 u\in L_p(\bR \times \Xi^k)$ by H\"older's inequality.

Now we are ready to derive \eqref{eq6.00b}.
For each $(t_1,x_1) \in \bR \times \Xi^k$ and $C \in \cC$ such that $(t_1,x_1) \in C$, we find the smallest $r>0$ and $(t_0,x_0) \in \bR \times\overline{\Xi^k}$ satisfying $C \subset Q_r(t_0,x_0)$ and \eqref{eq1213}.
Then Lemma \ref{lem5.6} together with \eqref{eq1213} implies that
\begin{align*}
I&:= \dashint_C \dashint_C |D_1 u(t,x)-D_1 u(s,y)| \, dx \, dt \, dy \, ds\\
&\le N \kappa^{-\alpha} \left(\dashint_{Q^k_{\kappa r}(t_0,x_0)} |D_1 u|^2 \, dx \, dt\right)^{1/2}
+ N \kappa^{\frac{d}{2}+1}\left(\dashint_{Q^k_{\kappa r}(t_0,x_0)} |h|^2 \, dx \, dt\right)^{1/2}.
\end{align*}
Here and below, $h := |f| + \lambda^{-1/2}|g|$.
Since $(t_1,x_1) \in Q^k_{\kappa r}(t_0,x_0)$, due to \eqref{eq1214} and the definition of maximal functions, from the above inequality we have
$$
I \le N \kappa^{-\alpha} \left(\cM|D_1 u|^2(t_1,x_1)\right)^{1/2} + N \kappa^{\frac{d}{2}+1} \left(\cM|h|^2(t_1,x_1)\right)^{1/2}.
$$
This along with the fact that $C$ is an arbitrary parabolic cube containing $(t_1,x_1)$ proves that
$$
(D_1 u)^{\#}(t_1,x_1) \le N \kappa^{-\alpha} \left(\cM|D_1 u|^2(t_1,x_1)\right)^{1/2} + N \kappa^{\frac{d}{2}+1} \left(\cM|h|^2(t_1,x_1)\right)^{1/2}.
$$
Since $D_1 u \in L_p(\bR \times \Xi^k)$, we can apply \eqref{eq1215} and \eqref{eq1216} to the above inequality and get
$$
\|D_1 u\|_{L_p(\bR\times \Xi^k)}\le N\kappa^{-\alpha}\|D_1 u\|_{L_p(\bR\times \Xi^k)}+N \kappa^{\frac{d}{2}+1}\|D_1 h\|_{L_p(\bR\times \Xi^k)}.
$$
Then by choosing $\kappa > 32$ sufficiently large so that $N \kappa^{-\alpha} \le 1/2$, we obtain \eqref{eq6.00b}.
The proposition is proved.
\end{proof}

Note that if a given equation is as in \eqref{eq6.01}, the estimate \eqref{eq6.00b} is not enough to get complete $L_p$-estimates as in \eqref{eq518_3} for $p \in (1,2)$ using the duality argument.
Nevertheless, one can still get \eqref{eq518_3} if the right-hand side of equation is in a particular form as in \eqref{eq518_4}. Indeed,
by using the duality argument, from Proposition \ref{prop5.8} we deduce the following corollary.

\begin{corollary}
                            \label{cor4.11}
Let $\cL \in \bL_1$, $\lambda > 0$, $p\in (1,2)$, $k\in [1,d]$ be an integer,  $ f \in C_0^{\infty}(\bR \times\overline{\Xi^k})$, and $u \in \cH_2^1(\bR\times\Xi^k) \cap C_{\text{loc}}^{\infty}(\bR \times\overline{\Xi^k})$.
If $u$ satisfies
\begin{equation}
							\label{eq518_4}
-u_t+\cL u -\lambda u = D_1 f
\end{equation}
with the conormal derivative boundary condition $a^{1j} D_j u = f$ on $\bR \times \Gamma^{k,1}$ and the Dirichlet boundary condition $u=0$ on $\bR\times \Gamma^{k,i}$, $i=2,\ldots,k$, then we have $u\in \cH_p^1(\bR\times\Xi^k)$ and
\begin{equation}
							\label{eq518_3}
\sqrt\lambda\|u\|_{L_p(\bR \times  \Xi^k)}+\|D u\|_{L_p(\bR \times  \Xi^k)}
\le N \| f\|_{L_p(\bR \times \Xi^k)},
\end{equation}
where $N=N(d,\delta,p)$.
\end{corollary}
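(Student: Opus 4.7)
The plan is to run a duality argument based on Proposition \ref{prop5.8}. The key algebraic observation is that the (formal) $L_2$-adjoint of an $\bL_1$-operator lies in $\bL_2$: if $\cL u=D_i(a^{ij}D_j u)$ with $a^{i1}=0$ for $i\ge 2$, then $\cL^* v:=D_i(b^{ij}D_j v)$ with $b^{ij}:=a^{ji}$ satisfies $b^{1j}=a^{j1}=0$ for $j\ge 2$, so $\cL^*\in\bL_2$. Since $p\in(1,2)$, the conjugate exponent $p'=p/(p-1)$ lies in $(2,\infty)$ and thus falls into the range of Proposition \ref{prop5.8}.

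To bound $\sqrt\lambda\|u\|_{L_p(\bR\times\Xi^k)}+\|Du\|_{L_p(\bR\times\Xi^k)}$, pick test functions $\phi_0\in C_0^\infty(\bR\times\Xi^k)$ and $\phi=(\phi_1,\ldots,\phi_d)\in C_0^\infty(\bR\times\Xi^k)^d$ with \emph{interior} compact support and $L_{p'}$ norms at most $1$; by density it suffices to evaluate
$$J:=\sqrt\lambda\int u\phi_0\,dx\,dt+\int Du\cdot\phi\,dx\,dt=\int uF\,dx\,dt,\qquad F:=\sqrt\lambda\phi_0-\Div\phi,$$
the second equality coming from integration by parts (no boundary term since $\phi$ is supported in the open set). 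Applying Proposition \ref{prop5.8} to $\cL^*\in\bL_2$ with exponent $p'$ after a time reversal $t\mapsto -t$ (which preserves \eqref{eq5.21}--\eqref{eq5.22} and the $\bL_2$ structure) produces the unique $v\in\cH_2^1(\bR\times\Xi^k)$ solving
$$v_t+\cL^* v-\lambda v=\Div\phi-\sqrt\lambda\phi_0$$
with conormal condition $a^{11}D_1 v=\phi_1=0$ on $\bR\times\Gamma^{k,1}$ (automatic from the interior support of $\phi_1$) and Dirichlet data on $\bR\times\Gamma^{k,m}$ for $m\ge 2$. Proposition \ref{prop5.8} yields
$$\|D_1 v\|_{L_{p'}(\bR\times\Xi^k)}\le N\bigl(\|\phi\|_{L_{p'}}+\|\phi_0\|_{L_{p'}}\bigr)\le N.$$

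Next, pair $-u_t+\cL u-\lambda u=D_1 f$ against $v$ and integrate by parts in both $t$ and $x$. The boundary contributions on $\Gamma^{k,m}$, $m\ge 2$, vanish because $u=v=0$ there. On $\Gamma^{k,1}$, the conormal condition $a^{1j}D_j u=f$ produces a term $-\int_{\bR\times\Gamma^{k,1}} fv$; the dual cross term $u\cdot a^{j1}D_j v$ vanishes because $a^{j1}=0$ for $j\ge 2$ (the $\bL_1$ hypothesis) combined with $a^{11}D_1 v=0$ on $\Gamma^{k,1}$; finally, integrating $\int D_1 f\cdot v$ by parts in $x_1$ produces a boundary term $-\int_{\bR\times\Gamma^{k,1}} fv$ that exactly cancels the first. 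What remains is the clean identity $\int uF\,dx\,dt=\int f D_1 v\,dx\,dt$, so by H\"older
$$|J|=\Bigl|\int f D_1 v\,dx\,dt\Bigr|\le\|f\|_{L_p}\|D_1 v\|_{L_{p'}}\le N\|f\|_{L_p}.$$
Taking the supremum separately over $\phi_0$ (with $\phi\equiv 0$) and over $\phi$ (with $\phi_0\equiv 0$) yields \eqref{eq518_3}. The membership $u\in\cH_p^1$ then follows by rewriting $u_t=\cL u-\lambda u-D_1 f$ and observing each term is either in $L_p$ or the divergence of an $L_p$ vector field.

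The main obstacle is the careful accounting of the two separate boundary integrals on $\bR\times\Gamma^{k,1}$: one must exploit simultaneously the $\bL_1$ hypothesis on $\cL$ (which forces the adjoint to lie in $\bL_2$ and kills the cross term $u\cdot a^{j1}D_j v$) and the integration by parts of $D_1 f$ in $x_1$ (which cancels the conormal contribution from $u$). A secondary technical point is justifying the integrations by parts rigorously; this uses $u\in\cH_2^1\cap C_{\text{loc}}^\infty(\bR\times\overline{\Xi^k})$, $v\in\cH_2^1(\bR\times\Xi^k)$, $f\in C_0^\infty$, and $\lambda>0$.
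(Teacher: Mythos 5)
Your proof is correct and is exactly the argument the paper intends: the corollary is stated there with only the remark ``by using the duality argument, from Proposition \ref{prop5.8} we deduce the following corollary,'' and your write-up supplies the details faithfully --- in particular the key points that the adjoint of an $\bL_1$-operator lies in $\bL_2$, that the special right-hand side $D_1 f$ means only $\|D_1 v\|_{L_{p'}}$ (which is all Proposition \ref{prop5.8} controls) is needed, and that the boundary contributions on $\bR\times\Gamma^{k,1}$ cancel. No gaps beyond the routine justification of the integrations by parts, which you correctly flag and which is handled by the standard weak formulations.
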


Using Lemma \ref{lem5.34} and a scaling argument, we prove the following proposition, where $\cL$ is not necessarily in $\bL_1$ or $\bL_2$.

\begin{proposition}
							\label{prop5.7}
Let $\lambda > 0$, $k\in [1,d]$ be an integer, $f=(f_1,\ldots,f_d), g \in C_0^{\infty}(\bR \times\overline{\Xi^k})$, and
$u \in \cH_p^1(\bR\times\Xi^k) \cap C_{\text{loc}}^{\infty}(\bR \times\overline{\Xi^k})$.
Suppose that either $a^{dj}=0, j=1,2,\ldots, l$ and $p\in [2,\infty)$ or $a^{id}=0, i=1,2,\ldots,l$ and $p\in (1,2]$, where $l = \min\{k, d-1\}$.
If $u$ satisfies \eqref{eq6.01} in $\bR\times \Xi^k$
with the Dirichlet boundary condition $u=0$ on $\bR\times \partial \Xi^k$, then we have
\begin{equation}
                                            \label{eq6.00}
\sqrt\lambda\|u\|_{L_p(\bR \times \Xi^k)}+\|D u\|_{L_p(\bR \times \Xi^k)}
\le N \|f\|_{L_p(\bR \times \Xi^k)} + N \lambda^{-1/2} \|g\|_{L_p(\bR \times \Xi^k)},
\end{equation}
where $N=N(d,\delta,p)$.
\end{proposition}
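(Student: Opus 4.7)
The plan is to mirror the proof of Proposition \ref{prop5.8}, substituting Lemma \ref{lem5.34} for Lemma \ref{lem5.6}, and then recover the small-$p$ range by duality. Write Case A for $p \in [2,\infty)$ with $a^{dj}=0$, $j\le l$, and Case B for $p \in (1,2]$ with $a^{id}=0$, $i\le l$.

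For Case A at $p=2$, the estimate is immediate from Theorem \ref{thm0411_1}. For $p>2$, I would run the Fefferman--Stein argument exactly as in Proposition \ref{prop5.8}: for each $(t_1,x_1)\in\bR\times\overline{\Xi^k}$ and each dyadic cube $C\in\cC$ containing it, Lemma \ref{lem5.34} combined with \eqref{eq1213}--\eqref{eq1214} yields, for $i=1,\ldots,d-1$ and $h:=|f|+\lambda^{-1/2}|g|$,
$$
(D_i u)^{\#}(t_1,x_1)\le N\kappa^{-\alpha}\bigl(\cM|D_i u|^2(t_1,x_1)\bigr)^{1/2}+N\kappa^{d/2+1}\bigl(\cM h^2(t_1,x_1)\bigr)^{1/2}.
$$
Then \eqref{eq1215}--\eqref{eq1216} and absorption via large $\kappa$ produce $\sum_{i<d}\|D_i u\|_{L_p}\le N\|f\|_{L_p}+N\lambda^{-1/2}\|g\|_{L_p}$. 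To pass from this partial bound to the full estimate, I would test \eqref{eq6.01} against $|u|^{p-2}u$ (legitimate by smoothness and the Dirichlet boundary condition) and use Young's inequality to extract $\sqrt\lambda\|u\|_{L_p}$; then, exploiting the Case A hypothesis $a^{dj}=0$ for $j\le l$ together with the one-variable dependence of $a^{dd}$, I would reorganize the equation by moving the now-controlled tangential derivatives $D_i u$, $i<d$, into the right-hand side, so that the residual is a one-dimensional divergence-form parabolic equation in $x_d$ (with the remaining variables as parameters), and extract $\|D_d u\|_{L_p}$ from the standard 1D $L_p$-theory applied fiber-wise.

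For Case B, I would invoke duality. The formal $L_2$-adjoint of $\cL$ has coefficient matrix $(a^*)^{ij}=a^{ji}$, so the Case B hypothesis $a^{id}=0$ for $i\le l$ translates into the Case A hypothesis $(a^*)^{dj}=0$ for $j\le l$ for the adjoint, with dual exponent $p'=p/(p-1)\in[2,\infty)$. Given arbitrary $\phi=(\phi_1,\ldots,\phi_d),\psi\in C_0^\infty(\bR\times\Xi^k)$ with unit $L_{p'}$ norm (modulo the $\lambda$ scaling), one solves the time-reversed adjoint Dirichlet problem $-v_t+\cL^* v-\lambda v=\Div\phi+\psi$ via the Case A estimate, and pairs $v$ with $u$ using the original equation to deliver the desired bounds on $\sqrt\lambda\|u\|_{L_p}$ and $\|Du\|_{L_p}$.

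The principal obstacle is the passage from the Fefferman--Stein bound on the tangential derivatives $D_i u$, $i<d$, to the normal derivative $D_d u$ in Case A. Because the interior and boundary $C^{1,\alpha}$ regularity results (Lemmas \ref{lem44}, \ref{lem05}) break down for $D_d u$ owing to the $x_d$-dependence of the coefficients, Lemma \ref{lem5.34} supplies no mean-oscillation estimate for $D_d u$ directly; recovering $D_d u$ in $L_p$ requires nontrivial use of the special 1D structure of $a^{dd}$ (as a function of a single variable) and of the sparsity of the $d$-th row under the Case A hypothesis.
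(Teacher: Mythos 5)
Your first two steps match the paper: the mean--oscillation/Fefferman--Stein argument with Lemma \ref{lem5.34} in place of Lemma \ref{lem5.6} gives $\sum_{i<d}\|D_iu\|_{L_p}\le N\|f\|_{L_p}+N\lambda^{-1/2}\|g\|_{L_p}$ for $p>2$, and the paper also disposes of $p\in(1,2)$ by duality. The genuine gap is exactly where you flag "the principal obstacle": your proposed recovery of $\|D_du\|_{L_p}$ does not work. First, the Case A hypothesis kills only $a^{dj}$ for $j\le l$, not the transposed entries $a^{id}$ (nor $a^{dj}$ for $l<j<d$), so after moving the controlled tangential derivatives to the right-hand side the residual equation still contains the terms $D_i\bigl(a^{id}D_du\bigr)$, $i<d$, which involve the \emph{uncontrolled} derivative $D_du$; they cannot be absorbed into the data. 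Second, even ignoring those cross terms, the right-hand side of the residual equation is $\Div F+G$ where $\Div F=\sum_{i<d}D_iF_i$ involves \emph{tangential} derivatives of $L_p$ functions; for fixed $x'$ these are not admissible data for a one-dimensional divergence-form problem in $(t,x_d)$, so a fiber-wise 1D $L_p$-theory simply does not apply. The equation remains genuinely $d$-dimensional.

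The paper closes this gap with the anisotropic scaling of \cite{DK11}: setting $v(t,x)=u(t,x'/\varepsilon,x_d)$ converts the equation into $-v_t+\Delta_{d-1}v+D_d(a^{dd}D_dv)-\lambda v=\Div\hat f+\hat g$, where every problematic term -- in particular $a^{id}D_dv$ -- appears with a factor $\varepsilon$ and can be absorbed, while the already controlled $D_iv$, $i<d$, enter $\hat f$ harmlessly. Since $u$ vanishes on $\partial\Xi^k$, one extends $v$ by odd reflections to $\bR\times\bR^d$ and invokes the whole-space $L_p$-estimate of \cite[Theorem 5.1 (iii)]{DK11} for the operator $-\partial_t+\Delta_{d-1}+D_d(a^{dd}D_d)$ with $a^{dd}$ measurable in one variable; scaling back and combining with the tangential estimate yields \eqref{eq6.00}. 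You would need this scaling-plus-whole-space step (or an equivalent $d$-dimensional input) to complete your argument; your energy-type derivation of $\sqrt\lambda\|u\|_{L_p}$ via testing with $|u|^{p-2}u$ is plausible but is then superfluous, since the whole-space estimate delivers that term as well.
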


\begin{proof}
The case when $p=2$ follows from Theorem \ref{thm0411_1}. In the sequel, we assume that $p\neq 2$. By the duality argument, we may further assume that $p\in (2,\infty)$.
Then by following the proof of \eqref{eq6.00b}, from Lemma \ref{lem5.34} we obtain
\begin{equation}
							\label{eq0423_4}
\sum_{i=1}^{d-1}\| D_i u \|_{L_p(\bR \times \Xi^k)}
\le N \|f\|_{L_p(\bR \times \Xi^k)} + N \lambda^{-1/2}\|g\|_{L_p(\bR \times \Xi^k)}.
\end{equation}

Now we prove that \eqref{eq0423_4} implies \eqref{eq6.00}.
To do this, we use $L_p$-estimates for equations defined in $\bR \times \bR^d$ and an idea of scaling in \cite{DK11}. For an $\varepsilon>0$ to be chosen later, introduce
$$
v(t,x)=u\left(t,\frac{x_1}{\varepsilon},\frac{x_2}{\varepsilon}, \ldots, \frac{x_{d-1}}{\varepsilon}, x_d\right) := u(t, x'/\varepsilon,x_d),
$$
where we denote $x'=(x_1,x_2,\ldots,x_{d-1})$.
From \eqref{eq6.01} we see that $v$ satisfies
$$
-v_t+\Delta_{d-1} v + D_d(a^{dd}D_d v) - \lambda v =\Div \hat f + \hat g
$$
in $\bR\times\Xi^k$ with the Dirichlet boundary condition $v=0$ on $\bR\times \partial\Xi^k$.
Here
\begin{align*}
&\hat f_i(t,x) = \varepsilon f_i(t,x'/\varepsilon, x_d) +D_i v - \varepsilon^2 \sum_{j=1}^{d-1}a^{ij} D_j v - \varepsilon a^{id} D_d v,
\quad
i = 1, \ldots, d-1,\\
&\hat f_d(t,x)= f_d(t,x'/\varepsilon, x_d)
- \varepsilon \sum_{j=1}^{d-1} a^{dj} D_j v,\\
&\hat g (t,x) = g(t,x'/\varepsilon, x_d).
\end{align*}
Since $v$ has the Dirichlet boundary condition $v=0$ on $\bR\times \partial \Xi^k$, using a similar extension process presented in the proof of Theorem \ref{thm0411}, we obtain $\bar v \in \cH_p^1(\bR\times\bR^d)$ and $\bar f, \bar g \in L_p(\bR \times \bR^d)$ such that $\bar v$, $\bar f$, and $\bar g$ are the extensions of $v$, $\hat f$, and $\hat g$ to the domain $\bR \times \bR^d$ and satisfy
$$
-\bar{v}_t + \Delta_{d-1} \bar{v} + D_d(a^{dd}D_d \bar{v}) - \lambda \bar{v} = \Div \bar{f} + \bar g,
$$
in $\bR \times \bR^d$, where the $L_p$ norms of $\bar v$, $D \bar{v}$,  $\bar f$, and $\bar g$ in $\bR \times \bR^d$ are comparable to those of $v$, $Dv$, $\hat f$, and $\hat g$ in $\bR \times  \Xi^k$, respectively.
In fact, to extend the equation in $\bR \times \Xi^k$ to the one defined in $\bR \times \Xi^{k-1}$, we take the odd extensions of $v$, $\hat f_i$, and $\hat g$ with respect to $x_k$ except $\hat f_k$, for which we take the even extension with respect to $x_k$.
Notice that the coefficient $a^{dd}$ is a measurable function of only time or $x_d$.
Thus by the $L_p$-estimates proved in  \cite[Theorem 5.1 (iii)]{DK11}\footnote{Indeed, this theorem is applicable whenever $a^{ij}$ are measurable functions of time and one spatial variable, say $x_k$, except $a^{kk}$ which is a measurable function of either time or $x_k$.} together with the definitions of $\hat f$ and $\hat g$, and the comparability of the $L_p$ norms of $v$, $Dv$, $\hat f$, and $\hat g$ with those of $\bar{v}$, $\bar f$, and $\bar g$, we obtain
\begin{multline*}
\sqrt{\lambda}\|v\|_{L_p(\bR \times  \Xi^k)} + \|Dv\|_{L_p(\bR \times  \Xi^k)}
\le \sqrt{\lambda}\|\bar v\|_{L_p(\bR \times\bR^d)}
+ \|D \bar v\|_{L_p(\bR \times\bR^d)}\\
\le N \|\bar f\|_{L_p(\bR \times\bR^d)} + N \lambda^{-1/2}\|\bar g\|_{L_p(\bR \times\bR^d)}
\le N \|\hat f\|_{L_p(\bR \times \Xi^k)} + N \lambda^{-1/2}\|\hat g\|_{L_p(\bR \times\Xi^k)}\\
\le N(d,\delta,p,\varepsilon) \|f\|_{L_p(\bR \times \Xi^k)}
+ N(d,\delta,p,\varepsilon) \lambda^{-1/2}\|g\|_{L_p(\bR \times \Xi^k)}\\
+ N(d,\delta,p,\varepsilon) \sum_{i=1}^{d-1}\|D_iv\|_{L_p(\bR \times \Xi^k)}
+ N(d,\delta,p) \varepsilon \|D_d v\|_{L_p(\bR \times \Xi^k)}.
\end{multline*}
From these inequalities with an appropriate choice of $\varepsilon$ so that $N(d,\delta,p) \varepsilon < 1/2$, we get
\begin{align*}
\sqrt{\lambda}\|v\|_{L_p(\bR \times  \Xi^k)} + \|Dv\|_{L_p(\bR \times  \Xi^k)}
&\le N \|f\|_{L_p(\bR \times \Xi^k)}
+ N\lambda^{-1/2}\|g\|_{L_p(\bR \times \Xi^k)}\\
&\quad+ N \sum_{i=1}^{d-1}\|D_iv\|_{L_p(\bR \times \Xi^k)}.
\end{align*}
We now scale back to $u$ and use \eqref{eq0423_4} to get \eqref{eq6.00}. The proposition is proved.
\end{proof}

\section{Proofs of Theorems \ref{thm01} and \ref{thm02}}
                                    \label{sec4}

We complete the proofs of Theorems \ref{thm01} and \ref{thm02} in this section.

\begin{proof}[Proof of Theorem \ref{thm01}]
We prove the theorem only when $T = \infty$. The case $T < \infty$ is deduced from this case and the standard argument (see, for example, the proof of Theorem 2.1 in \cite{Kr07}).

We first prove the a priori estimate \eqref{eq1201} for $\lambda > 0$.
For the case $\lambda = 0$, we just take the limit as $\lambda \searrow 0$.
Without loss of generality, we assume that $u \in C_0^{\infty}(\bR \times\overline{\Xi^k})$, $f \in C_0^{\infty}(\bR \times \overline{\Xi^k})$, and $a^{ij}$ are infinitely differentiable. Thanks to Remark \ref{rem2.2}, we may also assume that $k\le d-1$.
We rewrite the equation \eqref{eq01} as
$$
-u_t + \sum_{i=1}^k D_{i}^2 u+\sum_{i,j=k+1}^d a^{ij}D_{ij}u - \lambda u = f+\sum_{i=1}^k D_{i}^2 u - \sum_{i\text{ or } j\le k} a^{ij} D_{ij}u :=F.
$$
Since $u$ has the Dirichlet boundary condition $u=0$ on $\bR\times \partial \Xi^k$, as in the proof of Theorem \ref{thm0411} we find $\bar{u} \in W_p^{1,2}(\bR\times\bR^d)$ and $\bar{F} \in L_p(\bR \times \bR^d)$ such that $\bar{u}$ and $\bar{F}$ are the extensions of $u$ and $F$ to the domain $\bR \times \bR^d$ satisfying
$$
-\bar{u}_t + \sum_{i=1}^k D_{i}^2 \bar{u}+\sum_{i,j=k+1}^d a^{ij}D_{ij}\bar{u} - \lambda \bar{u} = \bar{F},
$$
in $\bR \times \bR^d$ and
\begin{equation}
                            \label{eq1.09}
\|\bar u\|_{W_p^{1,2}(\bR \times \bR^d)} \cong \|u\|_{W_p^{1,2}(\bR \times \Xi^k)},
\quad
\|\bar F\|_{L_p(\bR \times \bR^d)} \cong \|F\|_{L_p(\bR \times \Xi^k)}.
\end{equation}
Now we observe that, for $i,j = k+1, \ldots, d,(i,j)\neq (d,d)$, the coefficients $a^{ii}$ are measurable functions of time and one spatial variable, and the coefficient $a^{dd}$ is a function of only time or one spatial variable.
Then from the $L_p$-estimates for equations in the whole space established in \cite[Theorem 2.2 (iii) and Theorem 2.3 (iii)]{Dong12} together with the definition of $\bar{F}$, we obtain
\begin{multline}
                                \label{eq4.15}
\lambda \|\bar{u}\|_{L_p(\bR \times \bR^d)} + \lambda^{1/2}\|D\bar{u}\|_{L_p(\bR \times \bR^d)} + \|D^2\bar{u}\|_{L_p(\bR \times\bR^d)}
+ \|\bar{u}_t\|_{L_p(\bR \times \bR^d)}\\
\le N(d,\delta) \|\bar{F}\|_{L_p(\bR \times \bR^d)}
\le N \sum_{i\text{ or } j\le k} \|D_{ij} u\|_{L_p(\bR \times \Xi^k)} + N \|f\|_{L_p(\bR \times \Xi^k)},
\end{multline}
which, together with \eqref{eq1.09}, implies that
\begin{multline*}
\lambda \|u\|_{L_p(\bR\times\Xi^k)}+ \lambda^{1/2} \|Du\|_{L_p(\bR\times\Xi^k)}+
\|D^2 u\|_{L_p(\bR\times\Xi^k)} + \|u_t\|_{L_p(\bR\times\Xi^k)}\\
\le N \|f\|_{L_p(\bR \times\Xi^k)} + N\sum_{i\text{ or } j\le k}\|D_{ij}u\|_{L_p(\bR \times\Xi^k)}.
\end{multline*}
Thus, by symmetry, the estimate \eqref{eq1201} follows once we have
\begin{equation}
							\label{eq0424_5}
\|DD_{1}u\|_{L_p(\bR\times\Xi^k)}
\le N(d,\delta,p) \|f\|_{L_p(\bR\times\Xi^k)}.
\end{equation}
To prove this estimate we consider two cases as below.

{\em (i) The case $a^{dd}=a^{dd}(x_d)$}: Denote $y' = (y_1,\ldots,y_{d-1})$ and let
$$
y_d := \phi(x_d) = \int_0^{x_d} \frac{1}{a^{dd}(s)} \, ds,
\quad
y_i := x_i,
\quad
i=1,\ldots,d-1.
$$
We set
$$
\tilde u(t,y) = u(t,y', \phi^{-1}(y_d)),
\quad
\tilde f(t,y) = f(t, y', \phi^{-1}(y_d)),
$$
$$
\tilde{a}^{1j}(t,y_d) = (a^{1j} + a^{j1})(t,\phi^{-1}(y_d)),
\quad
\tilde{a}^{j1} = 0,
\quad
j = 2, \ldots, d-1,
$$
$$
\tilde{a}^{jd}(t,y_d) = \frac{a^{dj}+a^{jd}}{a^{dd}}\left(t,\phi^{-1}(y_d)\right),\quad \tilde{a}^{dj} = 0,
\quad j = 1,\ldots,d-1,
$$
$$
\tilde{a}^{dd}(y_d) = 1/a^{dd}(\phi^{-1}(y_d)),
$$
and
$\tilde{a}^{ij}(t,y_d) = a^{ij}(t,\phi^{-1}(y_d))$ for the other $(i,j)$.
Then the operator $\cL$ defined by
$$
\cL \tilde u = D_i\left(\tilde{a}^{ij} D_j \tilde u \right)
$$
belongs to $\bL_1$ and is uniformly non-degenerate with an ellipticity constant depending only on $\delta$.
A simple calculation shows that $\tilde u$ satisfies
$$
-\tilde{u}_t + \cL \tilde u - \lambda \tilde u = \tilde f
$$
with the Dirichlet boundary condition $\tilde u = 0$ on $\bR\times \partial \Xi^k$.
By Lemma \ref{lem01} $w := D_1\tilde u$ satisfies
$$
-w_t + \cL w - \lambda w= D_1 \tilde f
$$
in $\bR \times \Xi^k$ with the conormal derivative boundary condition
$a^{1j}D_j w = \tilde f $ on $\bR \times \Gamma^{k,1}$ and the Dirichlet boundary condition $w=0$ on $\bR \times \Gamma^{k,i}$, $i=2,\ldots,k$.
Then by Corollary \ref{cor4.11} applied to $w$ we arrive at
$$
\|DD_1\tilde u\|_{L_p(\bR\times\Xi^k)} = \|Dw\|_{L_p(\bR\times\Xi^k)}
\le N \|\tilde f\|_{L_p(\bR\times\Xi^k)},
$$
which implies the inequality \eqref{eq0424_5}.

{\em (ii) The case $a^{dd}=a^{dd}(t)$}: This case is actually simpler. We set
$$
\tilde{a}^{1j}(t,x_d) = (a^{1j} + a^{j1})(t,x_d),
\quad
\tilde{a}^{j1} = 0,
\quad
j = 2, \ldots, d-1,
$$
$$
\tilde{a}^{jd}(t,x_d) = (a^{dj}+a^{jd})(t,x_d),\quad \tilde{a}^{dj} = 0,
\quad j = 1, \ldots, d-1,
$$
and
$\tilde{a}^{ij}(t,x_d) = a^{ij}(t,x_d)$ for the other $(i,j)$. Then the operator $\cL$ defined by
$$
\cL u = D_i\left(\tilde{a}^{ij} D_j u \right)
$$
belongs to $\bL_1$ and is uniformly non-degenerate with an ellipticity constant depending only on $\delta$. Moreover,  $u$ satisfies
$$
-u_t + \cL u -\lambda u = f
$$
with the Dirichlet boundary condition $u = 0$ on $\bR\times \partial \Xi^k$. As in the first case, by applying Lemma \ref{lem01} and Corollary \ref{cor4.11} we get \eqref{eq0424_5}.
Therefore, we have proved the a priori estimate \eqref{eq1201}.
The solvability assertion follows from the a priori estimate and the method of continuity.
The theorem is proved.
\end{proof}

\begin{proof}[Proof of Theorem \ref{thm02}]
Again we prove the case $T = \infty$ only.
As in the proof of Theorem \ref{thm01}, we assume that $u \in C_0^\infty(\bR \times \overline{\bR^d_+})$, $f \in C_0^{\infty}(\bR \times \overline{\Xi^k})$, and $a^{ij}$ are infinitely differentiable. Proceeding as in the proof of Theorem \ref{thm01}, by the results in \cite{Dong12} we obtain \eqref{eq4.15} with $k=1$, where $\bar u$ and $\bar F$ are now the even extensions of $u$ and $F$ with respect to $x_1$, respectively.
Then, as before,  it suffices to prove \eqref{eq0424_5}.
Using the change of variables in the proof of Theorem \ref{thm01} for the case $ a^{dd} = a^{dd}(x_d)$, we see that
$w = D_1 \tilde u$ satisfies
$$
-w_t + \cL w - \lambda w = D_1 \tilde f
$$
with the Dirichlet boundary condition $w = 0$ on $\bR \times \partial \bR^d_+$.
In the case $a^{dd}=a^{dd}(t)$ we obtain the above equation with $w = D_1 u$ and $f$ in place of $\bar f$.
Then to prove \eqref{eq0424_5}, we argue as in the proof of Theorem \ref{thm01} using Proposition \ref{prop5.7} for $k=1$. The theorem is proved.
\end{proof}

\section{Equations with partially VMO coefficients}
                                                            \label{sec6}

In this section, we consider second-order parabolic equations
$$
-u_t+Lu-\lambda u:=-u_t+a^{ij}D_{ij}u+b^iD_iu+cu-\lambda u=f
$$
in $\bR_T \times \bR^d_+$ with leading coefficients $a^{ij}$ which also depend on $x' = (x_1, \ldots, x_{d-1})$. As functions of $(t,x)$, the coefficients $a^{ij}$ are supposed to be measurable with respect to $x_d$, and have small local mean oscillations in the other variables.
To be more precise, we impose the following assumption which contains a parameter $\gamma>0$ to be specified later.
\begin{assumption}                          \label{assum1}
The coefficients $a^{ij}$, $b^i$, and $c$ satisfy the following conditions.

(i) $a^{ij}$ satisfy \eqref{eq5.21}.

(ii) There is a constant $R_0\in (0,1]$ such that the following holds. For any parabolic cylinder $Q$ of radius $r\in (0,R_0)$, there exist $\bar a^{ij}=\bar a^{ij}(x_d)$, which depend on the cylinder $Q$ and satisfy \eqref{eq5.21}, such that
$$
\sum_{i,j=1}^d \dashint_{Q}|a^{ij}(t, x)-\bar a^{ij}(x_d)|\,dx\,dt \le \gamma.
$$

(iii) $b^i$ and $c$ are measurable functions bounded by a constant $K>0$.
\end{assumption}

We state the main results of this section.

\begin{theorem}[The Dirichlet problem]
                                \label{thm3}
Let $p \in (1,2]$, $T \in (-\infty,\infty]$,
and $f \in L_p(\bR_T\times\bR^d_+)$.
Then there exist constants $\gamma\in (0,1)$ and $N>0$ depending only on $d$, $\delta$, and $p$
such that under Assumption \ref{assum1}
the following hold true.
For any $u \in W_p^{1,2}(\bR_T\times\bR^d_+)$ satisfying $u=0$ on $\bR_T\times \partial \bR^d_+$ and
\begin{equation}
                             \label{eq15.16.01}
-u_t+L u - \lambda u =f
\end{equation}
in $\bR_T\times\bR^d_+$, we have
\begin{multline}
                             \label{eq15.16.02}
\lambda\|u\|_{L_{p}(\bR_T\times\bR^d_+)}+\lambda^{1/2}
\|Du\|_{L_{p}(\bR_T\times\bR^d_+)}+\|D^2u\|_{L_{p}(\bR_T\times\bR^d_+)}
+\|u_t\|_{L_{p}(\bR_T\times\bR^d_+)}\\
\le
N\|f\|_{L_{p}(\bR_T\times\bR^d_+)},
\end{multline}
provided that $\lambda \ge \lambda_0$,
where $\lambda_0 \ge 0$ is a constant
depending only on $d$, $\delta$, $p$, $K$, and $R_0$. Moreover, for any  $\lambda > \lambda_0$, there exists a unique $u \in W_p^{1,2}(\bR_T\times \bR^d_+)$ solving \eqref{eq15.16.01} with the Dirichlet boundary condition $u=0$ on $\bR_T\times \partial \bR^d_+$.
\end{theorem}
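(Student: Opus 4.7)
The plan is to combine the strategy from the proofs of Theorems \ref{thm01}--\ref{thm02} with a mean-oscillation perturbation argument in the spirit of Krylov's VMO $L_p$-theory. By the method of continuity and Corollary \ref{cor0411_1} (used to absorb the lower-order terms $b^iD_iu + cu$ once $\lambda\ge\lambda_0$), it suffices to establish the a priori estimate \eqref{eq15.16.02} for $u\in C_0^{\infty}(\bR\times\overline{\bR^d_+})$ with smooth $a^{ij}$ and $b^i\equiv c\equiv 0$. Following the proof of Theorem \ref{thm01}, I would first write
\begin{equation*}
-u_t + \sum_{i=1}^d D_{i}^2 u + \sum_{i,j \ge 2} a^{ij} D_{ij} u - \lambda u = F,\qquad F := f + D_1^2 u - \sum_{i=1\text{ or }j=1} a^{ij} D_{ij} u,
\end{equation*}
extend $u$ and $F$ oddly in $x_1$, and apply the whole-space $W^{1,2}_p$-estimate for partial-VMO coefficients from \cite{Dong12}. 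This reduces the problem to controlling $\|DD_1u\|_{L_p(\bR\times\bR^d_+)}$ by $\|f\|_{L_p}$ plus absorbable terms.

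Next I would realize $w:=D_1 u$ as the solution of a divergence form equation. The change of variable $y_d=\phi(x_d)$ from the proof of Theorem \ref{thm01} (case $a^{dd}=a^{dd}(x_d)$; the case $a^{dd}=a^{dd}(t)$ is analogous and simpler) puts the operator into divergence form with $\cL\in\bL_1$, whose coefficients remain measurable in $y_d$ and inherit the partial VMO property in $(t,y')$. An analog of Lemma \ref{lem01} — whose proof mollifies only in $(t,x_d)$ and is therefore unaffected by VMO-type dependence in the other variables — then shows that $w$ satisfies
\begin{equation*}
-w_t + \cL w - \lambda w = D_1 \tilde f \quad \text{in } \bR_T\times\bR^d_+,
\end{equation*}
with the conormal boundary condition $a^{1j}D_j w = \tilde f$ on $\bR_T\times\partial\bR^d_+$. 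The target becomes the bound $\sqrt\lambda\|w\|_{L_p}+\|Dw\|_{L_p}\le N\|\tilde f\|_{L_p}$ for $p\in(1,2]$, which I would deduce by duality (as in Corollary \ref{cor4.11}) from the analogous $L_{p'}$-estimate for $p'\in[2,\infty)$ with the roles of Dirichlet and conormal boundary conditions swapped.

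For this dual $L_{p'}$-estimate I would adapt the mean-oscillation scheme of Lemma \ref{lem5.6} and Proposition \ref{prop5.8} to the partial VMO setting. Given $(t_0,x_0)\in\bR\times\overline{\bR^d_+}$, $r>0$, and $\kappa\ge 32d$, decompose the solution on $Q_{\kappa r}^1(t_0,x_0)$ as $w = v + h$, where $v$ solves the auxiliary equation with $a^{ij}$ replaced by the $x_d$-dependent averages $\bar a^{ij}(x_d)$ furnished by Assumption \ref{assum1}(ii), and $h$ carries the perturbation $D_i\bigl((a^{ij}-\bar a^{ij})D_j w\bigr)$. Since $\bar a^{ij}$ depend only on $x_d$, the divergence form lemmas of Section \ref{sec3} (Lemmas \ref{lem02}, \ref{lem04}, \ref{lem05}) together with the De Giorgi--Nash--Moser estimate give a local $C^\alpha$-bound for $D_1 v$, while Theorem \ref{thm0411_1} controls $h$ in the $L_2$-energy norm. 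Combining these ingredients yields a sharp-function bound of the form
\begin{equation*}
(D_1 w)^{\#}(t_0,x_0) \le N\kappa^{-\alpha}\bigl(\cM|Dw|^2\bigr)^{1/2}(t_0,x_0) + N\kappa^{(d+2)/2}\gamma^{\sigma}\bigl(\cM|Dw|^{s}\bigr)^{1/s}(t_0,x_0),
\end{equation*}
for some $s\in(2,p')$ and $\sigma>0$. The Fefferman--Stein and Hardy--Littlewood theorems, followed by taking first $\kappa$ large and then $\gamma$ small (depending on $d,\delta,p$), give the desired $L_{p'}$-estimate; the $L_p$-estimate for $p\in(1,2]$ follows by duality, and solvability by the method of continuity bridging to the heat equation.

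The principal obstacle will be the perturbation step: bounding the mean oscillation of $(a^{ij}-\bar a^{ij})D_j w$ by a suitable power of $\gamma$. The pointwise bound $|a^{ij}-\bar a^{ij}|\le 2\delta^{-1}$ combined with $\dashint|a^{ij}-\bar a^{ij}|\le\gamma$ yields, via H\"older's inequality, a bound of the form $\gamma^{\sigma}(\dashint|Dw|^{s})^{1/s}$ only for some $s>2$, which forces the use of a stronger maximal function in the sharp-function estimate and requires a careful choice of $s$ that survives both the Fefferman--Stein application and the duality argument. Executing this exponent bookkeeping, while simultaneously verifying that the conormal boundary condition is preserved under both the freezing and the decomposition, is the key technical point; once it is resolved, the remaining steps — incorporation of the lower-order terms via Corollary \ref{cor0411_1} and existence via the method of continuity — follow the established template.
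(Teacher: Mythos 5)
Your overall architecture (reduce to an a priori estimate with $b^i\equiv c\equiv 0$ and compactly supported $u$, control most second derivatives by a whole-space/perturbation argument, and recover the rest from a divergence-form equation satisfied by a first derivative of $u$) is in the right spirit, and your closing discussion of the freezing/exponent bookkeeping is essentially the content of Lemmas \ref{lem4.8} and \ref{lem4.4}. However, the central step of your plan --- producing a divergence form equation for $w=D_1u$ --- breaks down in the partially VMO setting, for two independent reasons. First, the change of variables $y_d=\phi(x_d)=\int_0^{x_d}(a^{dd}(s))^{-1}\,ds$ is only available when $a^{dd}$ is a function of $x_d$ alone (or of $t$ alone); under Assumption \ref{assum1}, $a^{dd}$ depends on $x_d$ \emph{and} on $(t,x')$, so $\phi$ would depend on $(t,x')$ and the transformed operator acquires uncontrolled extra terms. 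Second, and more fundamentally, the proof of Lemma \ref{lem01} is \emph{not} insensitive to dependence on the remaining variables: it uses in several places that $a^{ij}$ and $a^{ij}_\varepsilon$ are independent of $x_1$ (for instance to write $D_1 h_i^\varepsilon=a^{ij}_\varepsilon D_1(uD_j\varphi)$ and to pass $D_1$ through the operator to obtain \eqref{eq0604_2}). When the coefficients are merely VMO in $x_1$, differentiating the equation in $x_1$ formally produces terms $(D_1a^{ij})D_{ij}u$ that are meaningless, so $D_1u$ satisfies no usable equation and your duality step has nothing to act on.

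The repair --- and the paper's actual route --- is to differentiate in $x_d$, the direction of merely measurable dependence, because there the differentiation itself creates the divergence structure. Dividing the equation by $a^{11}$ and rearranging yields $-u_t+\Delta_{x'}u+\tilde a^{d1}D_{d1}u+\tilde a^{dd}D_{dd}u-\lambda u=\tilde f$, and applying $D_d$ gives $-w_t+\Delta_{x'}w+D_d(\tilde a^{d1}D_1w+\tilde a^{dd}D_dw)-\lambda w=D_d\tilde f$ for $w=D_du$, with no smoothness of $\tilde a^{d1},\tilde a^{dd}$ in $x_d$ required; since $D_d$ is tangential to $\{x_1=0\}$, $w$ inherits the Dirichlet condition, and Proposition \ref{prop4.5} (with $\cL\in\bL_2$ and $p\in(1,2]$) gives $\|DD_du\|_{L_p}\le N\|\tilde f\|_{L_p}$. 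The quantities $u_t$, $DD_{x''}u$, and $\lambda\|u\|$ are controlled separately by Corollary \ref{cor4.7} (Krylov--Safonov for the frozen operator plus freezing), $D_1^2u$ is then read off from the equation, and the term $N\gamma^{\alpha_1}\|D^2u\|_{L_p}$ is absorbed by taking $\gamma$ small. So you should abandon both the $x_d$-change of variables and the $D_1u$ route, and reorganize the argument around $D_du$.
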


\begin{theorem}[The Neumann problem]
                                \label{thm4}
Let $p \in (2,\infty)$, $T \in (-\infty,\infty]$,
and $f \in L_p(\bR_T\times\bR^d_+)$.
Then there exist constants $\gamma\in (0,1)$ and $N>0$ depending only on $d$, $\delta$, and $p$
such that under Assumption \ref{assum1}
the following hold true.
For any $u \in W_p^{1,2}(\bR_T\times\bR^d_+)$ satisfying \eqref{eq15.16.01} and $D_1 u=0$ on $\bR_T \times \partial \bR^d_+$, we have \eqref{eq15.16.02}
provided that $\lambda \ge \lambda_0$,
where $\lambda_0 \ge 0$ is a constant
depending only on $d$, $\delta$, $p$, $K$, and $R_0$. Moreover, for any  $\lambda > \lambda_0$, there exists a unique $u \in W_p^{1,2}(\bR_T\times\bR^d_+)$ solving \eqref{eq15.16.01} with the Neumann boundary condition $D_1 u=0$ on $\bR_T\times\partial \bR^d_+$.
\end{theorem}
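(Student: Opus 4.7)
The plan is to follow the architecture of the proof of Theorem \ref{thm02}, upgrading the $L_p$-ingredients from coefficients depending on at most two variables to coefficients that are merely VMO in $(t,x')$ and measurable in $x_d$. Solvability is reduced to the a priori estimate \eqref{eq15.16.02} by the method of continuity. To prove the estimate, assume $u \in C_0^\infty(\bR_T \times \overline{\bR^d_+})$ and smooth $a^{ij}$; absorb $b^i D_i u$ and $cu$ into the right-hand side via a Corollary \ref{cor0411_1}-type argument after taking $\lambda \ge \lambda_0$ large; rewrite the equation so that only $\sum_{i=1}^d D_i^2 u$ appears on the left; and even-extend $u$ and the resulting right-hand side in $x_1$, which preserves the Neumann condition. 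Applying the whole-space $W_p^{1,2}$-estimate for parabolic operators with partially VMO coefficients \cite{Dong12} to the extended equation in $\bR \times \bR^d$ reduces \eqref{eq15.16.02}, exactly as in Theorem \ref{thm02}, to the single bound
\begin{equation*}
\|DD_1 u\|_{L_p(\bR_T \times \bR^d_+)} \le N\|f\|_{L_p(\bR_T \times \bR^d_+)}.
\end{equation*}

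Next, I would execute the change of variables used in the proof of Theorem \ref{thm02} (either $y_d = \int_0^{x_d} (a^{dd})^{-1}$ when $a^{dd} = a^{dd}(x_d)$, or the algebraic re-symmetrization when $a^{dd} = a^{dd}(t)$) to convert the equation into divergence form with an operator $\tilde \cL \in \bL_1$. The new coefficients $\tilde a^{ij}$ remain measurable in $y_d$ and VMO in $(t,x')$, since the change only rescales $x_d$ and takes linear combinations of $a^{ij}$ in the $(t,x_d)$-variables. Lemma \ref{lem01} then applies without modification (its proof uses only that the coefficients are independent of $x_1$), and $w := D_1 \tilde u$ satisfies $-w_t + \tilde\cL w - \lambda w = D_1 \tilde f$ on $\bR_T \times \bR^d_+$ with the Dirichlet boundary condition $w = 0$. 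It therefore suffices to establish a partially-VMO analog of Proposition \ref{prop5.7} for $\cL \in \bL_1$ with Dirichlet data in the range $p \in (2,\infty)$.

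For that $L_p$-estimate I would extend Lemma \ref{lem5.34} by a coefficient-freezing perturbation: on each parabolic cube $Q_r^1(t_0, x_0)$ with $r < R_0$, replace $\tilde a^{ij}$ by its $(t,x')$-average $\bar a^{ij}(y_d)$ over the cube, and decompose $w = v + h$, where $v$ solves the frozen equation with the same data and boundary condition and $h$ absorbs both the inhomogeneity and the error $\Div((\tilde a^{ij}-\bar a^{ij})D_j v)$. The frozen part $v$ has coefficients depending only on $y_d$, so Lemmas \ref{lem44} and \ref{lem05} supply the boundary/interior $C^{1,\alpha}$-estimate and the $\kappa^{-\alpha}$ decay in the mean oscillation; the perturbation $h$ is controlled in $L_2$ by $\gamma$ times $(\cM|Dw|^q)^{1/q}$ for some $q \in (2,p)$, where the higher integrability of $Dv$ is obtained from Proposition \ref{prop5.7} applied to $v$ at exponent $q$. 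Substituting the resulting refined mean oscillation estimate into the sharp function and Hardy--Littlewood machinery of Proposition \ref{prop5.8}, taking first $\kappa$ large and then $\gamma$ small, delivers the $L_p$-bound and hence \eqref{eq15.16.02}.

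The main obstacle is this perturbation step, in which the smallness parameter $\gamma$ must dominate the interaction between the $(t,x')$-oscillation of the coefficients and the merely $x_d$-measurable frozen coefficients uniformly in the freezing location; this requires careful bookkeeping of the higher integrability exponent for $Dv$ and of how the constants depend on the change of variables. Once that refined mean oscillation estimate is in hand, the remaining steps, namely the sharp function argument, undoing the change of variables, and the reduction of lower-order terms, are direct adaptations of Corollary \ref{cor4.11}, Proposition \ref{prop5.7}, and the conclusion of the proof of Theorem \ref{thm02}.
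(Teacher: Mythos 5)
There is a genuine gap, and it sits at the heart of your strategy: you propose to differentiate the equation in $x_1$ (after the change of variables $y_d=\int_0^{x_d}(a^{dd})^{-1}$) and invoke Lemma \ref{lem01}, asserting that its proof ``uses only that the coefficients are independent of $x_1$.'' But under Assumption \ref{assum1} the coefficients are \emph{not} independent of $x_1$: they are merely VMO in $(t,x')$, and $x'$ contains $x_1$. Differentiating $a^{ij}D_{ij}u$ in $x_1$ produces the commutator $(D_1 a^{ij})D_{ij}u$, which cannot be controlled for VMO coefficients, so neither Lemma \ref{lem01} nor any divergence-form rewriting in the $x_1$-direction is available. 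The same problem defeats the change of variables itself: $\phi(x_d)=\int_0^{x_d}(a^{dd})^{-1}\,ds$ requires $a^{dd}=a^{dd}(x_d)$, whereas here $a^{dd}$ depends on all of $(t,x)$; freezing it first would introduce error terms of the same uncontrollable type. Consequently the object you want to estimate, $w=D_1\tilde u$, does not satisfy a usable equation, and the ``partially-VMO analog of Proposition \ref{prop5.7}'' you plan to build has nothing to be applied to.

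The paper's proof avoids this by differentiating in $x_d$, the direction in which the coefficients are only measurable, not in $x_1$. One divides the equation by $a^{11}$ and rewrites it as in \eqref{eq0531_2}, so that the second-order part is $\Delta_{x'}u+\tilde a^{d1}D_{d1}u+\tilde a^{dd}D_{dd}u$ and all remaining second derivatives (which are of $DD_{x''}u$ type) are moved into $\tilde f$. Applying $D_d$ then yields the divergence-form equation \eqref{eq0604_6} for $w=D_du$ with $\cL\in\bL_2$: the $x_d$-dependence of $\tilde a^{d1},\tilde a^{dd}$ is harmless because it appears only inside $D_d(\cdots)$, understood weakly. For the Neumann problem $w$ inherits the conormal condition $D_1w=0$, and Proposition \ref{prop4.4} ($\cL\in\bL_2$, $p\in[2,\infty)$) gives $\|DD_du\|_{L_p}\le N\|\tilde f\|_{L_p}$. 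The terms $u_t$, $DD_{x''}u$, $\lambda u$ appearing in $\tilde f$ are estimated by the second assertion of Corollary \ref{cor4.7} (a freezing argument of the kind you sketch, but carried out for $U=|u_t|+|D^2_{x''}u|+\lambda|u|$ on the half space rather than for $DD_1u$), and $D_1^2u$ is recovered from the equation itself before absorbing the $N\gamma^{\alpha_1}\|D^2u\|$ error. Your freezing/perturbation machinery is essentially the paper's Lemma \ref{lem4.8} and Propositions \ref{prop4.4}--\ref{prop4.5}, so that part of the plan is sound; it is the choice of differentiation direction, and hence of which derivative of $u$ solves a divergence-form problem, that must be corrected.
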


\subsection{Estimates of $u_t$ and $DD_{x''}u$}

Let us first fix some additional notation used for the remaining part of this paper.
We write
$$
B_r^+(x_0) := B_r(x_0) \cap \bR^d_+,
\quad
Q_r^+(t_0, x_0) := (t_0 - r^2, t_0) \times B_r^+(x_0),
$$
$$
\Gamma_r(x_0) := B_r(x_0) \cap \partial \bR_+^d.
$$
As before, we write $B_r^+$ and $Q_r^+$ if $x_0 = 0$ and $(t_0, x_0) = 0$, respectively.
Recall $x' = (x_1, \ldots, x_{d-1})$ if $d \ge 2$ and denote $x'' = (x_2, \ldots, x_{d-1})$ if $d \ge 3$.

Throughout this and the next subsection, we assume that $b^i\equiv c\equiv 0$.
We first present several estimates for $u_t$ and $D_{x''}^2 u$. For
convenience, we set $D_{x''}^2 u\equiv 0$ if $d=2$ (here $d$ is at least 2). The next lemma
is a consequence of the Krylov--Safonov estimate.

\begin{lemma}
                                     \label{lem4.6}
Let $\lambda\ge 0$, $q\in (1,\infty)$, and $r > 0$. Assume that
$a^{ij}=a^{ij}(x_d)$, $u \in C^{\infty}(\overline{Q_r^+})$ satisfies $-u_t+Lu-\lambda u=0$ in $B_{r}^+$, and either $u$ or $D_1 u$
vanishes on $ (-r^2,0) \times \Gamma_{r}$. Then there exist
constants $N = N(d, \delta,q)$ and $\alpha=\alpha(d,\delta)\in
(0,1]$ such that
\begin{multline}
                                          \label{eq9.10.1}
 [u_t]_{C^\alpha(Q_{r/2}^+)}+[D^{2}_{x''}u]_{C^\alpha(Q_{r/2}^+)}+
 \lambda [u]_{C^\alpha(Q_{r/2}^+)}\\
\le N r^{-\alpha}
\left( \dashint_{Q_r^+} |u_t|^{q}+|D^{2}_{x''}u|^{q}+\lambda^q |u|^q \, dx \, dt \right)^{1/q}.
\end{multline}
\end{lemma}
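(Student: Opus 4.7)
The plan is to exploit that $a^{ij}=a^{ij}(x_d)$ depends on none of $t, x_1,\ldots,x_{d-1}$, so that tangential differentiation in $\partial_t$ and $D_k$ ($k\in\{2,\ldots,d-1\}$) preserves both the equation and the boundary condition, and then to apply a Krylov--Safonov interior $C^\alpha$-estimate after an odd/even reflection across $\{x_1=0\}$ and an Agmon-type dimensional trick to absorb the $\lambda$-dependence.

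Concretely, since $u\in C^\infty(\overline{Q_r^+})$, difference quotients in $t$ and in $x_k$ ($2\le k\le d-1$) commute with the equation, so each function in the finite family $\mathcal{V}:=\{u, u_t\}\cup\{D_{kl}u:2\le k,l\le d-1\}$ again satisfies $-v_t+a^{ij}(x_d)D_{ij}v-\lambda v=0$ in $Q_r^+$. In the Dirichlet case each $v\in\mathcal{V}$ vanishes on $\Gamma_r$ (as $u_t$ or as a tangential derivative of $u|_{\Gamma_r}=0$); in the Neumann case $D_1v=0$ on $\Gamma_r$, because $D_1$ commutes with $\partial_t$ and with each $D_k$, $k\ne 1$.

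For each $v\in\mathcal{V}$ I would then establish the boundary estimate
\[
[v]_{C^\alpha(Q_{r/2}^+)}\le Nr^{-\alpha}\left(\dashint_{Q_r^+}|v|^q\,dx\,dt\right)^{1/q}
\]
with constants depending only on $d,\delta,q$. To handle both boundary conditions uniformly, extend $v$ across $\{x_1=0\}$ oddly (Dirichlet) or evenly (Neumann) and simultaneously flip the sign of the off-diagonal cross coefficients via $\tilde a^{1j}(x_1,x_d):=\operatorname{sgn}(x_1)\,a^{1j}(x_d)$ and $\tilde a^{j1}(x_1,x_d):=\operatorname{sgn}(x_1)\,a^{j1}(x_d)$ for $j\ne 1$, leaving the remaining entries unchanged. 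A short parity count shows the extended function $\bar v$ is a strong a.e.\ solution of $-\bar v_t+\tilde a^{ij}D_{ij}\bar v-\lambda\bar v=0$ on the full cylinder $Q_r$; the reflection $\xi_1\mapsto\operatorname{sgn}(x_1)\xi_1$ maps $\tilde a^{ij}\xi_i\xi_j$ back to $a^{ij}\eta_i\eta_j$, so $\tilde a^{ij}$ is uniformly elliptic with the same constant $\delta$. To remove $\lambda$ from the Krylov--Safonov constants I would use the Agmon device from the proof of Lemma \ref{lem44}: set $V(t,x,z):=\bar v(t,x)\cos(\sqrt{\lambda}\,z)$, which satisfies the $\lambda$-free equation $-V_t+\tilde a^{ij}D_{ij}V+D_{zz}V=0$ on a suitable $(d+1)$-dimensional cylinder. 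The classical interior Krylov--Safonov $C^\alpha$-estimate, together with the standard local $L^\infty$-$L^q$ bound for strong solutions of non-divergence parabolic equations with measurable coefficients, applied to $V$ and then restricted to $z=0$, $x_1>0$, yields the displayed bound (using $|\cos|\le 1$ on the right-hand side).

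Applying this estimate to $v=u_t$, to each $v=D_{kl}u$, and to $v=u$ after multiplying by $\lambda$, and summing the finitely many resulting inequalities via the power-mean inequality $\sum_ia_i^{1/q}\le K^{1-1/q}(\sum_ia_i)^{1/q}$, produces \eqref{eq9.10.1}. The main technical hurdle is the parity/reflection step: one must verify both that $\tilde a^{ij}$ preserves ellipticity (a one-line quadratic-form computation) and that $\bar v$ is a genuine a.e.\ strong solution even though in the Dirichlet case $D_{11}\bar v$ may jump across $\{x_1=0\}$. Since the interface has measure zero and $\bar v$ has bounded second derivatives off it, the extended equation holds almost everywhere, which is exactly the regularity the Krylov--Safonov machinery requires.
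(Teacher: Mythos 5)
Your argument is correct and follows the same route as the paper's (one-line) proof: since $a^{ij}=a^{ij}(x_d)$, the functions $u_t$ and $D^2_{x''}u$ satisfy the same equation and boundary condition, the Krylov--Safonov estimate gives the H\"older bound, and Agmon's device removes the $\lambda$-dependence. Your odd/even reflection with the sign-flipped cross coefficients $\tilde a^{1j}=\operatorname{sgn}(x_1)a^{1j}$ is simply a correct, explicit way of supplying the boundary version of the Krylov--Safonov estimate that the paper invokes without detail.
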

\begin{proof}
For $\lambda=0$, \eqref{eq9.10.1} directly follows from the parabolic
Krylov--Safonov estimate since $u_t$ and $D^{2}_{x''}u$ satisfy the same equation. The general case $\lambda>0$ then follows from Agmon's idea presented in the proof of Lemma \ref{lem44}.
\end{proof}

Denote $U:=|u_t|+|D_{x''}^2 u|+\lambda |u|$. From Theorems \ref{thm01}, \ref{thm02}, Lemma \ref{lem4.6}, and the corresponding interior estimates, we deduce the following mean oscillation estimate of $U$. The proof is similar to that of Lemma \ref{lem5.6} with obvious modifications, and thus omitted.

\begin{lemma}
                                            \label{lem4.7}
Let $\lambda\ge 0$, $q\in (1,2]$, $r>0$, $\kappa\ge 32$, $(t_0,x_0)\in \bR\times
\overline{\bR^d_+}$, and $f\in L_q(Q_{\kappa r}^+(t_0,x_0))$. Suppose that $a^{ij}=a^{ij}(x_d)$, and $u\in W^{1,2}_q(Q_{\kappa r}^+(t_0,x_0))$
satisfies
\begin{equation*}
-u_t+Lu-\lambda u=f
\end{equation*}
in $Q_{\kappa r}^+(t_0,x_0)$
with the Dirichlet boundary condition $u=0$ on $ \left(t_0-(\kappa r)^2, t_0\right) \times \Gamma_{\kappa r}(x_0)$. Then
\begin{multline*}
\dashint_{Q^{+}_r(t_0,x_0)}\dashint_{Q^{+}_r(t_0,x_0)}| U (t,x)-
 U (s,y)|^q\,dx\,dt\,dy\,ds\\
\leq N\kappa^{d+2} \dashint_{Q^{+}_{\kappa r}
(t_0,x_0)} |f|^q \,dx\,dt
 +N\kappa^{-q\alpha} \dashint_{Q^{+}_{\kappa r}(t_0,x_0)}
 | U|^q\,dx\,dt,
\end{multline*}
where $\alpha=\alpha(d,\delta) \in (0,1)$ and $N = N(d,\delta,q)>0$. The same
estimate holds for $q\in [2,\infty)$ if $D_1u$  vanishes on
$\left(t_0-(\kappa r)^2, t_0\right) \times \Gamma_{\kappa r}(x_0)$ instead of $u$.
\end{lemma}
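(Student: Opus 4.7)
The plan is to follow the proof of Lemma \ref{lem5.6} with $D_1u$ replaced by $U=|u_t|+|D_{x''}^{2}u|+\lambda|u|$ and with Corollary \ref{cor4.11} replaced by Theorem \ref{thm01} (for $q\in(1,2]$ and the Dirichlet case) or Theorem \ref{thm02} (for $q\in[2,\infty)$ and the Neumann case). By a parabolic dilation I may normalize $\kappa r=8d$, and after a translation I may take $t_0=0$. Mimicking the construction in the proof of Lemma \ref{lem5.6}, I choose a base point $y_0\in\overline{\bR^d_+}$ with $y_{0,1}=x_{0,1}$ if $x_{0,1}\ge 1$ and $y_{0,1}=0$ otherwise, together with an intermediate box $\hat Q_\rho(0,y_0)$ constructed analogously so that $Q_r^{+}(0,x_0)\subset\hat Q_2(0,y_0)\subset\hat Q_6(0,y_0)\subset Q_{\kappa r}^{+}(0,x_0)$. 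I then pick a smooth cutoff $\eta$ which equals one on $\hat Q_4(0,y_0)$ and is supported in a slightly larger set contained in $Q_{\kappa r}^{+}$; in the Neumann subcase I take $\eta$ to be even in $x_1$ so that the boundary condition is preserved.

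By the appropriate one of Theorems \ref{thm01} and \ref{thm02}, there is a unique $w\in W_q^{1,2}(\bR\times\bR^d_+)$ solving $-w_t+Lw-\lambda w=\eta f$ in $\bR\times\bR^d_+$ with the same homogeneous boundary condition as $u$, satisfying
$$
\|w_t\|_{L_q(\bR\times\bR^d_+)}+\|D^{2}w\|_{L_q(\bR\times\bR^d_+)}+\lambda\|w\|_{L_q(\bR\times\bR^d_+)}\le N\|\eta f\|_{L_q(\bR\times\bR^d_+)}.
$$
Writing $U(w):=|w_t|+|D_{x''}^{2}w|+\lambda|w|$, this yields $\dashint_{Q_r^{+}(0,x_0)}|U(w)|^q\,dx\,dt\le N\kappa^{d+2}\dashint_{Q_{\kappa r}^{+}}|f|^q\,dx\,dt$. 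The function $v:=u-w$ then satisfies the homogeneous equation on $\hat Q_4(0,y_0)$ with the corresponding homogeneous boundary condition on whichever portion of $\bR\times\partial\bR^d_+$ meets this box. By a covering and scaling argument combined with Lemma \ref{lem4.6} (and the interior Krylov--Safonov estimate in the subcase $x_{0,1}\ge 1$),
$$
[U(v)]_{C^\alpha(\hat Q_2(0,y_0))}\le N\left(\dashint_{\hat Q_4(0,y_0)}|U(v)|^q\,dx\,dt\right)^{1/q}.
$$

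Since $U\le U(v)+U(w)$, the double integral on the left-hand side of the desired inequality is controlled by the sum of the corresponding double integrals for $U(v)$ and $U(w)$. Using $r=8d/\kappa$ and $Q_r^{+}\subset\hat Q_2(0,y_0)$, the first is bounded by $Nr^{q\alpha}[U(v)]_{C^\alpha(\hat Q_2(0,y_0))}^q$, which by the Hölder estimate and the pointwise inequality $U(v)\le U+U(w)$ is at most $N\kappa^{-q\alpha}\dashint_{Q_{\kappa r}^{+}}|U|^q\,dx\,dt+N\kappa^{d+2-q\alpha}\dashint_{Q_{\kappa r}^{+}}|f|^q\,dx\,dt$. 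The second is at most $N\dashint_{Q_r^{+}}|U(w)|^q\,dx\,dt\le N\kappa^{d+2}\dashint_{Q_{\kappa r}^{+}}|f|^q\,dx\,dt$. Summing yields the claim. The only delicate point, just as in Lemma \ref{lem5.6}, will be the correct choice of $y_0$ and $\hat Q$ so that $v$ inherits homogeneous boundary conditions on all faces intersecting $\hat Q_4(0,y_0)$; for the Neumann case, one must additionally verify that $\eta$ may be taken even in $x_1$ so that the equation for $w$ admits a solution with the required Neumann condition.
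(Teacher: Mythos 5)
Your argument is exactly the proof the paper has in mind: the paper omits the proof of this lemma with the remark that it is ``similar to that of Lemma \ref{lem5.6} with obvious modifications,'' and your decomposition $u=v+w$, with $w$ produced by Theorem \ref{thm01} (resp.\ Theorem \ref{thm02}) for the localized right-hand side $\eta f$ and $v$ handled by the H\"older estimate of Lemma \ref{lem4.6} together with the interior Krylov--Safonov estimate, is precisely that modification. The only point worth recording is that for $\lambda=0$ the auxiliary problem for $w$ should be solved on a finite time cylinder with zero initial data (or with $\lambda$ replaced by $\varepsilon>0$ followed by a passage to the limit), since the solvability assertions of Theorems \ref{thm01} and \ref{thm02} require $\lambda>0$, while the a priori estimate they provide is uniform down to $\lambda=0$.
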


Lemma \ref{lem4.7} together with a perturbation argument gives the
next result for general operators $L$ satisfying Assumption
\ref{assum1}.

\begin{lemma}
                                            \label{lem4.8}
Let $\lambda\ge 0$,  $q\in (1,2]$, $\beta\in (1,\infty)$,
$\beta'=\beta/(\beta-1)$, $(t_1,x_1)\in \bR\times \overline{\bR^d_+}$, and $f\in
L_{q,\text{loc}}(\bR\times \overline{\bR^d_+})$. Suppose $u\in
W^{1,2}_{q,\text{loc}}(\bR\times \overline{\bR^d_+})$ vanishes outside $Q^+_{R_0}(t_1,x_1)$
and satisfies
\begin{equation*}
-u_t+Lu-\lambda u=f
\end{equation*}
in $\bR\times\bR^d_+$ with the Dirichlet boundary condition $u=0$ on $\bR\times\partial\bR_+^d$.
Then under Assumption \ref{assum1}, for any $r>0$, $\kappa\ge 32$,
and $(t_0,x_0)\in \bR\times \overline{\bR^d_+}$, we have
\begin{align}
                                                  \label{osc2}
&\dashint_{Q^{+}_r(t_0,x_0)}\dashint_{Q^{+}_r(t_0,x_0)}| U(t,x)-
 U (s,y)|^q\,dx\,dt\,dy\,ds\nonumber\\
&\quad\leq N\kappa^{d+2} \dashint_{Q^{+}_{\kappa r}
(t_0,x_0)} |f|^q \,dx\,dt
 +N\kappa^{-q\alpha} \dashint_{Q^{+}_{\kappa r}(t_0,x_0)}
 |  U|^q \,dx\,dt\nonumber\\
&\quad\quad+N\kappa^{d+2} \Big(\dashint_{Q^{+}_{\kappa r}(t_0,x_0)}
 |  D^2 u|^{\beta q}\,dx\,dt\Big)^{\frac 1 {\beta}}\gamma^{\frac 1 {\beta'}},
\end{align}
where $\alpha$ is the constant from Lemma \ref{lem4.7} and the constant $N$ depends only on $d$, $\delta$, $\beta$, and
$q$. The same estimate holds for $q\in [2,\infty)$ if $D_1u$
vanishes on $\bR\times\partial\bR_+^d$ instead of $u$.
\end{lemma}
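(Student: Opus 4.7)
The plan is to prove the estimate via a coefficient-freezing (perturbation) argument that reduces the general VMO setting to the $a^{ij}(x_d)$ case handled in Lemma \ref{lem4.7}. The analysis naturally splits into two cases depending on the size of $\kappa r$ relative to the VMO scale $R_0$.

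\textbf{Case 1: $\kappa r<R_0$ (the main case).} Apply Assumption \ref{assum1}(ii) on the cylinder $Q_{\kappa r}^+(t_0,x_0)$ to obtain frozen coefficients $\bar a^{ij}=\bar a^{ij}(x_d)$ satisfying \eqref{eq5.21} with
$$
\sum_{i,j=1}^d \dashint_{Q_{\kappa r}^+(t_0,x_0)}|a^{ij}(t,x)-\bar a^{ij}(x_d)|\,dx\,dt\le\gamma.
$$
Set $\bar L v=\bar a^{ij}D_{ij}v$ and rewrite the equation as
$$
-u_t+\bar L u-\lambda u = f+(\bar a^{ij}-a^{ij})D_{ij}u =: \tilde f.
$$
Applying Lemma \ref{lem4.7} to $u$ regarded as a solution of the frozen-coefficient equation with right-hand side $\tilde f$ shows that the left-hand side of \eqref{osc2} is bounded by
$$
N\kappa^{d+2}\dashint_{Q_{\kappa r}^+(t_0,x_0)}|\tilde f|^q\,dx\,dt + N\kappa^{-q\alpha}\dashint_{Q_{\kappa r}^+(t_0,x_0)}|U|^q\,dx\,dt.
$$
Split $|\tilde f|^q\le N|f|^q+N|(\bar a-a)D^2 u|^q$; the first piece yields the $|f|^q$-term of \eqref{osc2}, while for the second piece H\"{o}lder with exponents $\beta$ and $\beta'$ gives
$$
\dashint|(\bar a-a)D^2u|^q \le \Big(\dashint|\bar a-a|^{q\beta'}\Big)^{1/\beta'}\Big(\dashint|D^2u|^{q\beta}\Big)^{1/\beta}.
$$
Since $|\bar a^{ij}-a^{ij}|\le 2/\delta$ is uniformly bounded, one has $|\bar a-a|^{q\beta'}\le N|\bar a-a|$, and therefore $\dashint|\bar a-a|^{q\beta'}\le N\gamma$, producing the factor $\gamma^{1/\beta'}$ in the third term of \eqref{osc2}.

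\textbf{Case 2: $\kappa r\ge R_0$.} Use the compact support of $u$. Since $u$ vanishes outside $Q_{R_0}^+(t_1,x_1)$, so do $u_t$, $D^2 u$ and $\lambda u$, and the equation then forces $f$ to vanish there too. If $Q_r^+(t_0,x_0)\cap Q_{R_0}^+(t_1,x_1)=\emptyset$, then $U\equiv 0$ on $Q_r^+(t_0,x_0)$ and the left-hand side of \eqref{osc2} vanishes. Otherwise, bound the left-hand side trivially by $2^q\dashint_{Q_r^+}|U|^q$ and use the pointwise identity $|U|\le N|D^2u|+N\lambda|u|+N|f|$ coming from the equation, combined with the volume ratio $|Q_{\kappa r}^+|/|Q_r^+|=\kappa^{d+2}$ and H\"{o}lder's inequality, to absorb everything into the three terms on the right-hand side of \eqref{osc2}.

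\textbf{Main obstacle.} The delicate step is Case 1: the H\"{o}lder bookkeeping must be arranged so that the $L^1$-smallness of the coefficient oscillation (Assumption \ref{assum1}(ii)) converts precisely into the factor $\gamma^{1/\beta'}$, and one must also check that $\tilde f\in L_q$ locally so that Lemma \ref{lem4.7} is genuinely applicable. Case 2 is more routine in spirit, but the bookkeeping of $\kappa$-powers requires care, and the compact support of $u$ is essential to prevent the large-$\kappa r$ regime from yielding any contribution outside the three terms stated in \eqref{osc2}.
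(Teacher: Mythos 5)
Your Case 1 ($\kappa r<R_0$) is essentially the paper's argument and is fine: freeze the coefficients on a cylinder of admissible radius, apply Lemma \ref{lem4.7} to the frozen-coefficient equation, and use H\"older together with $|\bar a^{ij}-a^{ij}|^{q\beta'}\le N|\bar a^{ij}-a^{ij}|$ to extract $\gamma^{1/\beta'}$.

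Case 2 ($\kappa r\ge R_0$), however, has a genuine gap. Your ``trivial'' bound of the left-hand side by $N\dashint_{Q_r^+}|U|^q\le N\dashint_{Q_r^+}(|D^2u|^q+\lambda^q|u|^q+|f|^q)$ cannot be absorbed into the right-hand side of \eqref{osc2}: the $|D^2u|$-term there carries the factor $\gamma^{1/\beta'}$ and the $|U|$-term carries the factor $\kappa^{-q\alpha}$, and both factors are essential (they are exactly what allows these terms to be absorbed later in the proof of Corollary \ref{cor4.7} by taking $\kappa$ large and $\gamma$ small). Enlarging $Q_r^+$ to $Q_{\kappa r}^+$ only produces the harmless volume factor $\kappa^{d+2}$, not the small factors $\gamma^{1/\beta'}$ or $\kappa^{-q\alpha}$; since $\gamma$ may be arbitrarily small, your bound is strictly weaker than \eqref{osc2} and the lemma would become useless downstream. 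The correct treatment of this regime --- and the reason the hypothesis that $u$ vanishes outside $Q^+_{R_0}(t_1,x_1)$ is in the statement at all --- is to still run the perturbation argument, but freezing the coefficients on $Q=Q_{R_0}(t_1,x_1)$ (which has admissible radius for Assumption \ref{assum1}(ii), whereas $Q_{\kappa r}$ does not). Because $D^2u$ is supported in $Q^+_{R_0}(t_1,x_1)\subset Q$, one may write $(\bar a^{ij}-a^{ij})D_{ij}u=1_Q(\bar a^{ij}-a^{ij})D_{ij}u$, and after H\"older the factor $\bigl(\dashint_{Q^+_{\kappa r}}|1_Q(\bar a^{ij}-a^{ij})|^{\beta'q}\,dx\,dt\bigr)^{1/\beta'}$ is bounded by $N\bigl(\dashint_{Q}|\bar a^{ij}-a^{ij}|^{\beta'q}\,dx\,dt\bigr)^{1/\beta'}\le N\gamma^{1/\beta'}$, since $|Q|\le 2|Q^+_{\kappa r}|$. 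With this fix both cases are handled by one and the same computation, differing only in the choice of $Q$.
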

\begin{proof}
We choose $Q=Q_{\kappa r}(t_0,x_0)$ if $\kappa r<R_0$ and $Q=Q_{R_0}(t_1,x_1)$ if $\kappa r\ge R_0$. For this $Q$, let $\bar a^{ij}=\bar a^{ij}(x_d)$ be the coefficients given by Assumption \ref{assum1} and $\bar L$ be the operator with the coefficients $\bar a^{ij}$. Then we have
$$
-u_t+\bar Lu-\lambda u=\bar f
$$
in $\bR \times \bR^d_+$, where $\bar f=f+(\bar a^{ij}-a^{ij})D_{ij}u$. It follows from Lemma \ref{lem4.7} that the left-hand side of \eqref{osc2} is less than
$$
N\kappa^{d+2} \dashint_{Q^{+}_{\kappa r}
(t_0,x_0)} |f+(\bar a^{ij}-a^{ij})D_{ij}u|^q \,dx\,dt
 +N\kappa^{-q\alpha} \dashint_{Q^{+}_{\kappa r}(t_0,x_0)}
 | U|^q\,dx\,dt.
$$
By H\"older's inequality,
$$
\dashint_{Q^{+}_{\kappa r}
(t_0,x_0)} |(\bar a^{ij}-a^{ij})D_{ij}u|^q \,dx\,dt=\dashint_{Q^{+}_{\kappa r}
(t_0,x_0)} |1_Q(\bar a^{ij}-a^{ij})D_{ij}u|^q \,dx\,dt
$$
$$
\le \Big(\dashint_{Q^{+}_{\kappa r}(t_0,x_0)}|  D^2 u|^{\beta q}\,dx\,dt\Big)^{\frac 1 {\beta}}
\Big(\dashint_{Q^{+}_{\kappa r}(t_0,x_0)}|1_Q(\bar a^{ij}-a^{ij})|^{\beta' q}\,dx\,dt\Big)^{\frac 1 {\beta'}}
$$
$$
\le 2^{\frac{1}{\beta'}}\Big(\dashint_{Q^{+}_{\kappa r}(t_0,x_0)}|  D^2 u|^{\beta q}\,dx\,dt\Big)^{\frac 1 \beta}
\Big(\dashint_{Q}|(\bar a^{ij}-a^{ij})|^{\beta' q}\,dx\,dt\Big)^{\frac 1 {\beta'}}
$$
$$
\le N\Big(\dashint_{Q^{+}_{\kappa r}(t_0,x_0)}
|  D^2 u|^{\beta q}\,dx\,dt\Big)^{\frac 1 \beta}\gamma^{\frac 1 {\beta'}},
$$
where the last inequality is due to Assumption \ref{assum1}. Thus
collecting the above inequalities we get \eqref{osc2} immediately.
The lemma is proved.
\end{proof}

\begin{corollary}
                                \label{cor4.7}
Let $\lambda\ge 0$, $p\in (1,\infty)$, $(t_1,x_1)\in \bR\times \overline{\bR^d_+}$, and
$f\in L_p(\bR\times\bR^d_+)$. Suppose $u\in W^{1,2}_{p}(\bR\times\bR^d_+)$ vanishes
outside $Q^+_{R_0}(t_1,x_1)$ and satisfies
\begin{equation*}
-u_t+Lu-\lambda u=f
\end{equation*}
in $\bR\times\bR^d_+$ with the Dirichlet boundary condition $u=0$ on $\bR\times\partial\bR_+^d$.
Then there exists a constant $\alpha_1=\alpha_1(p)>0$ such that
under Assumption \ref{assum1} the following holds. For any
$\gamma\in (0,1)$, we have
\begin{multline}
                                        \label{eq18.16.59}
\|u_t\|_{L_p(\bR\times\bR^d_+)}+\|DD_{x''} u\|_{L_p(\bR\times\bR^d_+)}+\lambda \|u\|_{L_p(\bR\times\bR^d_+)}
\\
\le
N\gamma^{\alpha_1} \|D^2
u\|_{L_p(\bR\times\bR^d_+)}+N_1\|f\|_{L_p(\bR\times\bR^d_+)},
\end{multline}
where $N=N(d,\delta, p)>0$ and $N_1=N_1(d, \delta, p, \gamma)>0$. The
same estimate holds for $p\in (2,\infty)$ if $D_1u$ vanishes on
$\bR\times\partial\bR_+^d$ instead of $u$.
\end{corollary}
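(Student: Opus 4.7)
The plan is to promote the mean oscillation bound of Lemma \ref{lem4.8} into the $L_p$ estimate \eqref{eq18.16.59} by the same Krylov-style recipe used in the proof of Proposition \ref{prop5.8}: combine the Fefferman--Stein sharp function theorem \eqref{eq1215} with the Hardy--Littlewood maximal function inequality \eqref{eq1216}, applied to $U=|u_t|+|D_{x''}^2 u|+\lambda|u|$ in place of $D_1 u$.

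First, I would fix auxiliary exponents $q$ and $\beta>1$ with $q$ in the range permitted by Lemma \ref{lem4.8} ($q\in(1,2]$ in the Dirichlet case, $q\in[2,\infty)$ in the Neumann case) and such that both $q<p$ and $q\beta<p$ hold; this is possible because $p>1$ (take $q$ and $\beta$ close enough to $1$ in the Dirichlet case, or $q$ close to $2$ and $\beta$ close to $1$ in the Neumann case). Note that $U\in L_p(\bR\times\bR^d_+)$ because $u\in W^{1,2}_p$, which legitimizes the absorption step at the end.

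Next, for each $(t_1,x_1)\in\bR\times\overline{\bR^d_+}$ and each dyadic cube $C\ni(t_1,x_1)$, I would use the cube-to-cylinder comparability \eqref{eq1213}--\eqref{eq1214} to invoke Lemma \ref{lem4.8} on the smallest $Q_r^+(t_0,x_0)\supset C$ and bound $\dashint_C\dashint_C |U(t,x)-U(s,y)|^q\,dx\,dt\,dy\,ds$ by the corresponding averages on $Q_{\kappa r}^+(t_0,x_0)$, hence by maximal functions evaluated at $(t_1,x_1)$. Taking the supremum over $C$, applying Jensen's inequality, and extracting the $q$-th root then yields
\begin{align*}
U^\#(t_1,x_1)&\le N\kappa^{-\alpha}\bigl(\cM(U^q)(t_1,x_1)\bigr)^{1/q}+N\kappa^{(d+2)/q}\bigl(\cM(|f|^q)(t_1,x_1)\bigr)^{1/q}\\
&\quad+N\kappa^{(d+2)/q}\gamma^{1/(q\beta')}\bigl(\cM(|D^2u|^{q\beta})(t_1,x_1)\bigr)^{1/(q\beta)}.
\end{align*}
Taking $L_p$ norms, \eqref{eq1215} handles the left-hand side, and \eqref{eq1216}---valid since $p/q>1$ and $p/(q\beta)>1$---handles the maximal functions on the right, producing
$$\|U\|_{L_p}\le N\kappa^{-\alpha}\|U\|_{L_p}+N\kappa^{(d+2)/q}\|f\|_{L_p}+N\kappa^{(d+2)/q}\gamma^{1/(q\beta')}\|D^2u\|_{L_p}.$$

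Finally, I would fix $\kappa=\kappa(d,\delta,p)$ large enough that $N\kappa^{-\alpha}\le 1/2$, absorb the $\|U\|_{L_p}$ term to the left, and set $\alpha_1=1/(q\beta')$ to conclude \eqref{eq18.16.59}; the coefficient of $\gamma^{\alpha_1}\|D^2u\|_{L_p}$ depends only on $d,\delta,p$, while the coefficient of $\|f\|_{L_p}$ has the form $N_1=N_1(d,\delta,p,\gamma)$ allowed by the statement. The main obstacle is the bookkeeping required to choose $q$ and $\beta$ so that simultaneously $q$ lies in the admissible range of Lemma \ref{lem4.8}, $q<p$, and $q\beta<p$, especially in the borderline cases $p$ close to $1$ (Dirichlet) and $p$ close to $2$ (Neumann); beyond this, the proof is a direct transcription of the sharp-function/maximal-function scheme already established in the paper.
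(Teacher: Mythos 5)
There is a genuine gap. Your sharp-function/maximal-function scheme is exactly the paper's first step and is carried out correctly: it yields
$$
\|U\|_{L_p(\bR\times\bR^d_+)}\le N\|f\|_{L_p(\bR\times\bR^d_+)}+N\gamma^{1/(q\beta')}\|D^2u\|_{L_p(\bR\times\bR^d_+)},
\qquad U=|u_t|+|D^2_{x''}u|+\lambda|u|.
$$
But the left-hand side of \eqref{eq18.16.59} contains $\|DD_{x''}u\|_{L_p}$, i.e.\ all mixed derivatives $D_iD_ju$ with $j\in\{2,\ldots,d-1\}$ and $i\in\{1,\ldots,d\}$, whereas $U$ only controls $D^2_{x''}u$ (both indices in $\{2,\ldots,d-1\}$). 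Lemma \ref{lem4.8} gives no oscillation estimate for $D_{x_1x''}u$ or $D_{x_dx''}u$ (these do not satisfy the same equation with merely measurable $a^{ij}(x_d)$), so your argument as written does not prove the stated inequality; it proves a weaker one with $\|D^2_{x''}u\|$ in place of $\|DD_{x''}u\|$.

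The paper closes this gap with an extra interpolation step: from $\|D_{x_1x''}u\|_{L_p}+\|D_{x_dx''}u\|_{L_p}\le N\|\Delta u\|_{L_p}$ and scaling in $x''$ one gets, for every $\varepsilon>0$,
$$
\|D_{x_1x''}u\|_{L_p}+\|D_{x_dx''}u\|_{L_p}\le \varepsilon\bigl(\|D_{x_1}^2u\|_{L_p}+\|D_{x_d}^2u\|_{L_p}\bigr)+N\varepsilon^{-1}\|D^2_{x''}u\|_{L_p},
$$
and then chooses $\varepsilon=\gamma^{1/(2\beta'q)}$. This is also why the correct exponent is $\alpha_1=1/(2q\beta')$ rather than your $1/(q\beta')$, and why $N_1$ must be allowed to depend on $\gamma$ (the $\varepsilon^{-1}$ factor multiplies the $\|f\|_{L_p}$ contribution coming from the bound on $\|D^2_{x''}u\|_{L_p}$). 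In your version $N_1$ would not need to depend on $\gamma$, which is a symptom of the missing step. Your choice of $q$ and $\beta$ and the application of the Fefferman--Stein and Hardy--Littlewood theorems are otherwise fine.
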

\begin{proof}
We take $q\in (1,2]$ and $\beta\in (1,\infty)$ such that $p>\beta q$. Due to \eqref{eq1213}, \eqref{eq1214}, and Lemma \ref{lem4.8}, we obtain a pointwise estimate
\begin{multline}
                                \label{eq18.16.36}
(U)^\#(t_0,x_0)
\le N\kappa^{\frac {d+2} q}\big(\cM (|f|^q(t_0,x_0))\big)^{\frac 1 q}
+N\kappa^{-\alpha}\big(\cM (|U|^q)(t_0,x_0)\big)^{\frac 1 q}\\
+N\kappa^{\frac {d+2} q}\gamma^{\frac 1 {\beta' q}}
\big(\cM (|D^2 u|^{\beta q})(t_0,x_0)\big)^{\frac 1 {\beta q}}
\end{multline}
for any $(t_0,x_0)\in \bR\times\overline{\bR^d_+}$.
As in the proof of Proposition \ref{prop5.8}, we deduce from \eqref{eq18.16.36} that
$$
\|U\|_{L_p(\bR\times\bR^d_+)}
\le N\kappa^{\frac {d+2} q}\|f\|_{L_p(\bR\times\bR^d_+)}+N\kappa^{-\alpha}
\|U\|_{L_p(\bR\times\bR^d_+)}
+N\kappa^{\frac {d+2} q}\gamma^{\frac 1 {\beta' q}}\|D^2 u\|_{L_p(\bR\times\bR^d_+)}.
$$
By taking $\kappa$ sufficiently large such that $N\kappa^{-\alpha}\le 1/2$, we get
\begin{equation}
                                  \label{eq18.16.38}
\|U\|_{L_p(\bR\times\bR^d_+)}
\le N\|f\|_{L_p(\bR\times\bR^d_+)}
+N\gamma^{\frac 1 {\beta' q}}\|D^2 u\|_{L_p(\bR\times\bR^d_+)}.
\end{equation}
By the definition of $U$, to prove \eqref{eq18.16.59} it remains to estimate $\|D_{x_1x''}u\|_{L_p(\bR \times \bR^d_+)}$ and $\|D_{x_dx''}u\|_{L_p(\bR \times \bR^d_+)}$. To this end, we
 observe that for any $\varepsilon>0$ and each $t\in \bR$
\begin{multline}
                                                    \label{3.57}
\|D_{x_1x''}u\|_{L_{p}(\bR^d_+)}+\|D_{x_dx''}u\|_{L_{p}(\bR^d_+)}\\
\le \varepsilon\big(\|D_{x_1}^2 u\|_{L_{p}(\bR^d_+)}+\|D_{x_d}^2 u\|_{L_{p}(\bR^d_+)}\big)
+N(d,p)\varepsilon^{-1}\|D_{x''}^2u\|_{L_{p}(\bR^d_+)},
\end{multline}
which is deduced from
\begin{multline*}
\|D_{x_1x''}u\|_{L_{p}(\bR^d_+)}+\|D_{x_dx''}u\|_{L_{p}(\bR^d_+)}\le N
\|\Delta u\|_{L_{p}(\bR^d_+)}\\
\leq N\|D_{x_1}^2u\|_{L_{p}(\bR^d_+)}+N\|D_{x_d}^2u\|_{L_{p}(\bR^d_+)}+N \|D_{x''}^2u\|_{L_{p}(\bR^d_+)}
\end{multline*}
by scaling in $x'' =(x_2,\ldots,x_{d-1})$. Combining \eqref{eq18.16.38} and \eqref{3.57},
we reach \eqref{eq18.16.59} upon choosing
$\varepsilon=\gamma^{\frac 1 {2\beta' q}}$. The last assertion follows from the
last assertion of Lemma \ref{lem4.8} by using the same proof.
\end{proof}

\subsection{Estimates for divergence form operators}

Let
$$
\cL u = D_i (a^{ij} D_j u ),
$$
where $a^{ij}$ satisfy Assumption \ref{assum1}.

Following the proof of Lemma \ref{lem4.8}, we derive the following lemma from Lemmas \ref{lem5.6} and \ref{lem5.34} with $k=1$.

\begin{lemma}
                                        \label{lem4.4}
Let $\cL\in \bL_2$, $\lambda>0$, $\beta\in (1,\infty)$ and
$\beta'=\beta/(\beta-1)$ be constants, $(t_1,x_1)\in \bR\times\overline{\bR^d_+}$,
and $f=(f_1,\ldots,f_d), g \in
C_{\text{loc}}^\infty(\bR\times\overline{\bR^d_+})$. Suppose that $u\in C_{\text{loc}}^\infty(\bR\times\overline{\bR^d_+})$ vanishes outside $Q^+_{R_0}(t_1,x_1)$ and satisfies
$$
-u_t+\cL u-\lambda u=\Div f +g
$$
locally in $\bR\times\bR^d_+$ with the conormal derivative boundary condition $a^{11}D_1 u=f$
on $\bR\times\partial\bR^d_+$. Then under Assumption \ref{assum1}, for any
$r>0$, $\kappa\ge 32$, and $(t_0,x_0)\in \bR\times\overline{\bR^d_+}$, we have
\begin{multline}
                                                  \label{eq17.21.22z}
\dashint_{Q^{+}_r(t_0,x_0)}\dashint_{Q^{+}_r(t_0,x_0)}|D_1u(t,x)-
D_1u(s,y)|^2\,dx\,dt\,dy\,ds\\
\leq N\kappa^{d+2} \dashint_{Q^{+}_{\kappa r} (t_0,x_0)}
| h|^2\,dx\,dt
 +N\kappa^{-2\alpha} \dashint_{Q^{+}_{\kappa
 r}(t_0,x_0)}|D_1u|^2\,dx\,dt\\
 +N\kappa^{d+2} \Big(\dashint_{Q^{+}_{\kappa r}(t_0,x_0)}
 |Du|^{2\beta}\,dx\,dt\Big)^{1/\beta}\gamma^{1/\beta'},
\end{multline}
where $h = |f| + \lambda^{-1/2}|g|$, $\alpha = \alpha(d,\delta) \in (0,1)$, and $N=N(d,\delta)>0$.
The same estimates holds for $\cL\in \bL_1$ if the conormal
derivative boundary condition is replaced with the Dirichlet
boundary condition $u=0$ on $\bR\times\partial\bR_+^d$.
\end{lemma}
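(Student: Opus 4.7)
The plan is to mirror the proof of Lemma \ref{lem4.8}, using the divergence-form oscillation estimates (Lemmas \ref{lem5.6} and \ref{lem5.34}) in place of Lemma \ref{lem4.7}. Given $(t_0,x_0)$, $r>0$ and $\kappa\ge 32$, I first choose a freezing cylinder $Q$: take $Q=Q_{\kappa r}(t_0,x_0)$ when $\kappa r<R_0$, and $Q=Q_{R_0}(t_1,x_1)$ when $\kappa r\ge R_0$. Assumption \ref{assum1}(ii) then supplies coefficients $\bar a^{ij}(x_d)$ satisfying \eqref{eq5.21} and $\sum_{ij}\dashint_Q |a^{ij}-\bar a^{ij}|\,dx\,dt\le \gamma$. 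Since $\cL\in \bL_2$ forces $a^{1j}=0$ for $j\ge 2$, I can (and do) set $\bar a^{1j}=0$ there, so the frozen operator $\bar\cL u=D_i(\bar a^{ij}D_j u)$ also belongs to $\bL_2$; the analogous choice works for $\bL_1$.

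Next I rewrite the equation as a perturbed equation with frozen coefficients. Setting
$$
\tilde f_i := f_i - (a^{ij}-\bar a^{ij}) D_j u,
$$
a direct computation gives
$$
-u_t+\bar\cL u-\lambda u = \Div \tilde f + g
$$
in the appropriate local sense, and the conormal boundary relation $a^{11}D_1 u=f_1$ becomes $\bar a^{11}D_1 u=\tilde f_1$ on $\bR\times\partial\bR^d_+$ (the Dirichlet case for $\bL_1$ is preserved trivially). Applying Lemma \ref{lem5.6} (respectively Lemma \ref{lem5.34} for the $\bL_1$/Dirichlet case) to $u$ with the frozen operator $\bar\cL$ and data $(\tilde f,g)$ yields
\begin{align*}
&\dashint_{Q_r^+(t_0,x_0)}\dashint_{Q_r^+(t_0,x_0)}|D_1u(t,x)-D_1u(s,y)|^2\,dx\,dt\,dy\,ds\\
&\le N\kappa^{-2\alpha}\dashint_{Q^+_{\kappa r}(t_0,x_0)}|D_1u|^2\,dx\,dt
+N\kappa^{d+2}\dashint_{Q^+_{\kappa r}(t_0,x_0)}\bigl(|\tilde f|^2+\lambda^{-1}|g|^2\bigr)\,dx\,dt.
\end{align*}

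The remaining task is to absorb the perturbation $\tilde f - f$ into the $\gamma^{1/\beta'}$ term. Using $|\tilde f|^2\le 2|f|^2+N|a-\bar a|^2|Du|^2$, H\"older's inequality with exponents $\beta$ and $\beta'$ gives
$$
\dashint_{Q^+_{\kappa r}(t_0,x_0)}|a-\bar a|^2|Du|^2\,dx\,dt
\le \Bigl(\dashint_{Q^+_{\kappa r}(t_0,x_0)}|Du|^{2\beta}\,dx\,dt\Bigr)^{1/\beta}
\Bigl(\dashint_{Q^+_{\kappa r}(t_0,x_0)}|1_Q(a-\bar a)|^{2\beta'}\,dx\,dt\Bigr)^{1/\beta'}.
$$
Because $u$ (hence its derivative) vanishes outside $Q^+_{R_0}(t_1,x_1)$, the choice of $Q$ yields $|Q|/|Q^+_{\kappa r}(t_0,x_0)|\le 2$ in the truncated case and $Q\subset Q^+_{\kappa r}(t_0,x_0)$ in the other, so the last factor is bounded by $N\dashint_Q |a-\bar a|^{2\beta'}\,dx\,dt$. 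Combining the uniform bound $|a-\bar a|\le 2\delta^{-1}$ with Assumption \ref{assum1}(ii) gives $\dashint_Q |a-\bar a|^{2\beta'}\le N\gamma$, and plugging this back produces exactly \eqref{eq17.21.22z}.

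The only technical points to watch are: (i) ensuring $\bar\cL$ stays in the same subclass $\bL_1$ or $\bL_2$ so that the boundary version of the frozen-coefficient lemma is applicable; and (ii) the volume comparison $|Q|/|Q^+_{\kappa r}|\le N$ used to pass from an average over $Q^+_{\kappa r}$ to an average over $Q$, which is where the support hypothesis on $u$ plays its role. Neither is a serious obstacle, and the rest is a direct transcription of the argument in Lemma \ref{lem4.8}.
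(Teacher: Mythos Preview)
Your proposal is correct and follows exactly the approach the paper indicates: the paper's own proof is the single sentence ``Following the proof of Lemma \ref{lem4.8}, we derive the following lemma from Lemmas \ref{lem5.6} and \ref{lem5.34} with $k=1$,'' and you have spelled out precisely that transcription. Your identification of the one genuine technical point---ensuring the frozen operator $\bar\cL$ remains in $\bL_2$ (resp.\ $\bL_1$) so that Lemma \ref{lem5.6} (resp.\ Lemma \ref{lem5.34}) applies---is apt; since $a^{1j}\equiv 0$ for $j\ge 2$, setting $\bar a^{1j}=0$ does not increase the oscillation sum in Assumption \ref{assum1}, and ellipticity can be preserved by instead shifting those entries to $\bar a^{j1}$ (which leaves the symmetric part unchanged), so your claim that this is ``not a serious obstacle'' is justified.
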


Next we derive an a priori estimate for solutions to divergence form equations.

\begin{proposition}
                                \label{prop4.4}
Suppose either $\cL\in \bL_1$ and $p\in (1,2]$ or $\cL\in \bL_2$
and $p\in [2,\infty)$.
Let $f=(f_1,\ldots,f_d), g \in
L_p(\bR \times \bR^d_+)\cap C_{\text{loc}}^\infty(\bR\times \overline{\bR^d_+})$. Then there exist constants $\gamma\in (0,1)$ and $N>0$ depending only on $d$, $\delta$ and $p$, and $\lambda_0\ge 0$ depending only on these parameters as well as $R_0$,  such
that under Assumption \ref{assum1} the following holds true.
For any $\lambda > \lambda_0$ and $u\in \cH^1_p(\bR \times \bR^d_+)\cap C_{\text{loc}}^\infty(\bR\times \overline{\bR^d_+})$ satisfying
$$
-u_t+\cL u-\lambda u=\Div f  + g
$$
in $\bR\times\bR^d_+$ with the conormal derivative boundary condition $a^{1j}D_ju=f_1$
on $\bR\times\partial\bR^d_+$, we have
\begin{equation}
                \label{eq15.16.35}
\lambda^{1/2}\|u\|_{L_{p}(\bR\times\bR^d_+)}+
\|Du\|_{L_{p}(\bR\times\bR^d_+)}\le N\|f\|_{L_{p}(\bR\times\bR^d_+)}+ N \lambda^{-1/2}\|g\|_{L_{p}(\bR\times\bR^d_+)}.
\end{equation}
\end{proposition}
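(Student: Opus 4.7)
The plan is to combine the mean oscillation / sharp function technique of Proposition \ref{prop5.8} with a rescaling-plus-extension reduction in the spirit of Proposition \ref{prop5.7}, exploiting the smallness of $\gamma$ to absorb the resulting error terms. A duality argument---testing the primal equation against the adjoint, noting that the transpose of an operator in $\bL_1$ lies in $\bL_2$ and vice versa, and that the conormal boundary condition transforms into a conormal condition for the dual operator---reduces the two cases of the proposition to one. I therefore concentrate on $\cL \in \bL_2$ and $p \in [2,\infty)$; the $\bL_1$, $p \in (1,2]$ case then follows by duality, analogous to the passage from Proposition \ref{prop5.8} to Corollary \ref{cor4.11}.

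By a partition of unity argument at scale $R_0$, I may assume that $u$ is compactly supported in some $Q^+_{R_0}(t_1,x_1)$, absorbing the zero-order commutator terms by taking $\lambda_0$ large. Choosing $q=2$ and $\beta>1$ with $p>2\beta$, I apply Lemma \ref{lem4.4}; this gives an oscillation bound on $D_1 u$ whose leading terms are $\kappa^{-2\alpha}\dashint|D_1 u|^2$ and $\kappa^{d+2}\dashint|h|^2$ (with $h := |f| + \lambda^{-1/2}|g|$), together with a partial-VMO perturbation $\kappa^{d+2}\gamma^{1/\beta'}(\dashint|Du|^{2\beta})^{1/\beta}$. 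Mirroring the proof of Proposition \ref{prop5.8}---rewriting the oscillation pointwise in terms of maximal functions, invoking the Fefferman--Stein sharp function theorem and the Hardy--Littlewood maximal inequality, and taking $\kappa$ large enough to absorb $N\kappa^{-\alpha}\|D_1 u\|_{L_p}$ on the left---produces
$$\|D_1 u\|_{L_p(\bR \times \bR^d_+)} \le N\|f\|_{L_p} + N\lambda^{-1/2}\|g\|_{L_p} + N\gamma^{1/(2\beta')}\|Du\|_{L_p}. \qquad (\star)$$

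For the remaining tangential components $D_i u$, $i=2,\ldots,d$, and for $\sqrt\lambda\|u\|_{L_p}$, I plan to extend $u$ by even reflection across $\{x_1=0\}$. Because $\cL \in \bL_2$ forces $a^{1j}=0$ for $j\ge 2$ and the conormal condition is $a^{11}D_1 u=f_1$, this reflection yields a well-posed divergence-form equation on $\bR \times \bR^d$ whose coefficients remain measurable in $x_d$ and partial VMO in the other directions (with parameter at most a dimensional multiple of $\gamma$). Rescaling $x_i \to x_i/\varepsilon$ for $i \ne 1,d$ as in the proof of Proposition \ref{prop5.7} converts the leading part in the rescaled non-measurable tangential directions into a Laplacian, so the whole-space $L_p$-theory for divergence-form operators with coefficients measurable in one variable plus a small VMO perturbation (available from \cite{Dong12, DK11}) delivers an estimate of the form
$$\sqrt\lambda\|u\|_{L_p} + \sum_{i \ne 1}\|D_i u\|_{L_p} \le N\|f\|_{L_p} + N\lambda^{-1/2}\|g\|_{L_p} + N(\varepsilon + \gamma^{\alpha_1})\|Du\|_{L_p}.$$
Adding this to $(\star)$ and choosing $\varepsilon$ and $\gamma$ sufficiently small that the total coefficient of $\|Du\|_{L_p}$ on the right is at most $1/2$ absorbs it into the left and yields \eqref{eq15.16.35}.

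The principal difficulty I anticipate lies in the tangential-direction estimate: one must treat the half-space geometry (via an extension compatible with the $\bL_2$ structure and the conormal boundary data) simultaneously with the partial VMO condition after rescaling, so that the whole-space $L_p$-theory still applies and every perturbation constant remains controllable by the single smallness parameter $\gamma$. Coordinating the extension, the rescaling, and the relative sizes of $\varepsilon$ and $\gamma$ so that all error terms can be absorbed into $(\star)$ is the technical crux of the argument.
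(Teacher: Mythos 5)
Your first half---the duality reduction to $\cL\in\bL_2$, $p\in(2,\infty)$, the localization to $Q^+_{R_0}(t_1,x_1)$, and the derivation of $(\star)$ from Lemma \ref{lem4.4} via the sharp-function argument of Proposition \ref{prop5.8}---matches the paper's proof (your $(\star)$ is the paper's \eqref{eq11.24}). The gap is in the second half. First, the claim that even reflection across $\{x_1=0\}$ produces whole-space coefficients satisfying Assumption \ref{assum1} with parameter a dimensional multiple of $\gamma$ is false as stated: $\cL\in\bL_2$ only gives $a^{1j}=0$ for $j\ge 2$; the coefficients $a^{i1}$, $i\ge 2$, need not vanish, and to keep the reflected equation consistent they would have to be extended oddly in $x_1$, which creates an $O(1)$ jump across $\{x_1=0\}$ and destroys the small mean oscillation condition. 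The paper's fix is to first rewrite the equation with $\tilde f_i=f_i-a^{i1}D_1u$ for $i\ge 2$, so that the remaining operator has no $x_1$-cross terms and all surviving coefficients extend evenly; the price is an extra term $N\|D_1u\|_{L_p}$ (with $N$ not small) on the right-hand side of the whole-space estimate, which is then controlled by substituting $(\star)$. You must incorporate this step; your stated tangential estimate, whose only gradient term on the right is the small $N(\varepsilon+\gamma^{\alpha_1})\|Du\|_{L_p}$, is not what the extension argument actually yields.

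Second, the rescaling $x_i\to x_i/\varepsilon$ ``as in Proposition \ref{prop5.7}'' does not transfer to this setting. In Proposition \ref{prop5.7} the coefficients depend only on $(t,x_d)$, and the rescaling is legitimate only because \eqref{eq0423_4} has already bounded $\|D_iu\|_{L_p}$ for all $i=1,\ldots,d-1$, so the terms $D_iv$ that the rescaling places into $\hat f_i$ are under control. Here the coefficients depend on all variables (so the rescaled tangential part is not a Laplacian), and at this stage you only control $\|D_1u\|_{L_p}$, so the analogous $D_iv$ terms for $i=2,\ldots,d-1$ would make the argument circular. The correct and simpler route, which the paper takes, is to skip the rescaling entirely and invoke the whole-space $L_p$-theory for divergence form equations with variably partially BMO coefficients (Theorem 6.3 of \cite{DK11b}) directly after the even extension; that theorem already handles coefficients measurable in $x_d$ and VMO in the remaining variables and yields \eqref{eq1.10}, after which combining with $(\star)$ and choosing $\gamma$ small gives \eqref{eq15.16.35}.
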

\begin{proof}
The case $p=2$ follows from Theorem \ref{thm0411_1}.
By the duality argument, we may assume that $p\in (2,\infty)$ and $\cL\in \bL_2$.
First we consider the case when $u$ vanishes outside $Q_{R_0}^+(t_1,x_1)$ for some $(t_1,x_1)\in \bR \times \overline{\bR^d_+}$.
Following the proof of Proposition \ref{prop5.8}, from \eqref{eq17.21.22z} we obtain
\begin{align*}
\|D_1 u\|_{L_p(\bR\times\bR^d_+)}&\le N\kappa^{\frac {d+2} 2}\|h\|_{L_p(\bR\times\bR^d_+)}\\
&\,\,+N\kappa^{-\alpha}\|D_1 u\|_{L_p(\bR\times\bR^d_+)}+N\kappa^{\frac {d+2} 2}\gamma^{\frac 1 {2\beta'}}\|D u\|_{L_p(\bR\times\bR^d_+)},
\end{align*}
which implies
\begin{equation}
                                    \label{eq11.24}
\|D_1 u\|_{L_p(\bR\times\bR^d_+)}\le N\|h\|_{L_p(\bR\times\bR^d_+)}+N\gamma^{\frac 1 {2\beta'}}\|D u\|_{L_p(\bR\times\bR^d_+)},
\end{equation}
upon choosing $\kappa$ sufficiently large such that $N\kappa^{-\alpha}\le 1/2$. Now since $\cL\in \bL_2$, we can rewrite the equation into
\begin{equation}
                                \label{eq1.04}
-u_t+\tilde \cL u-\lambda u=\Div \tilde f+g,
\end{equation}
where
\begin{align*}
\tilde \cL u&=D_1(a^{11}D_1 u)+\sum_{i,j=2}^d D_i(a^{ij}D_j u),\\
\tilde f_1&=f_1,\quad \tilde f_i=f_i-a^{i1}D_1 u,\quad i=2,\ldots,d.
\end{align*}
We take the even extensions of $u$, $a^{11}$, $a^{ij},i,j\ge 2$, $\tilde f_i,i\ge 2$, and $g$ with respect to $x_1$, and the odd extension of $\tilde f_1$ with respect to $x_1$. It is easily seen that after these extensions, $u\in \cH^1_p(\bR\times\bR^d)$ satisfies \eqref{eq1.04} in $\bR\times\bR^d$, and the coefficients of $\tilde \cL$ satisfies Assumption \ref{assum1} with $2\gamma$ in place of $\gamma$. Thanks to the $L_p$-estimates for divergence type equations in the whole space with partially BMO coefficients (see Theorem 6.3 in \cite{DK11b}), we get
\begin{multline}
                                        \label{eq1.10}
\lambda^{1/2}\|u\|_{L_{p}(\bR\times\bR^d)}+
\|Du\|_{L_{p}(\bR\times\bR^d)}\le N\|\tilde f\|_{L_{p}(\bR\times\bR^d)}+ N \lambda^{-1/2}\|g\|_{L_{p}(\bR\times\bR^d)}\\
\le N\|f\|_{L_{p}(\bR\times\bR^d_+)}+N\|D_1 u\|_{L_{p}(\bR\times\bR^d_+)}+ N \lambda^{-1/2}\|g\|_{L_{p}(\bR\times\bR^d_+)}
\end{multline}
provided that $\gamma<\gamma_0(d,\delta,p)$. Combining \eqref{eq11.24} and \eqref{eq1.10} and choosing $\gamma$ even smaller, we obtain  \eqref{eq15.16.35} for any $\lambda>0$ and $u$ vanishing outside $Q_{R_0}^+(t_1,x_1)$. To complete the proof of \eqref{eq15.16.35} for general $u\in \cH^1_p(\bR \times \bR^d_+)\cap C_{\text{loc}}^\infty(\bR\times \overline{\bR^d_+})$, we use the standard partition of unity argument. See, for instance, the proof of Theorem 5.7 in \cite{Kr07}. The proposition is proved.
\end{proof}

\begin{remark}
Theorem 6.3 in \cite{DK11b} requires that the function $\vu$ vanishes outside $Q_{\gamma R_0}(t_1,x_1)$ instead of $Q_{R_0}(t_1,x_1)$.
Nevertheless, the theorem still holds with $Q_{R_0}$ by modifying the second part (the case $\kappa r \ge R_0$) of the proof of Theorem 6.1 in the same paper. Indeed, in the proof of the case $\kappa r \ge R_0$, we find $\cT_Q \in \bO$ and $\{ \bar a_{\alpha\beta} \}_{|\alpha|=|\beta|=m} \in \bA$ for $Q = Q_{R_0}$, and proceed as in the proof of Lemma \ref{lem4.8} in this paper.
\end{remark}

Similarly, we deduce the following proposition from the second assertion of Lemma \ref{lem4.4}.
\begin{proposition}
                                \label{prop4.5}
Suppose either $\cL\in \bL_2$ and $p\in (1,2]$ or $\cL\in \bL_1$
and $p\in [2,\infty)$. Let $f=(f_1,\ldots,f_d), g \in
L_p(\bR \times \bR^d_+)\cap C_{\text{loc}}^\infty(\bR\times \overline{\bR^d_+})$. Then there exist constants $\gamma \in (0,1)$ and $N>0$ depending only on $d$, $\delta$ and $p$, and $\lambda_0\ge 0$ depending only on these parameters as well as $R_0$, such
that under Assumption \ref{assum1} the following holds true. For
any $\lambda > \lambda_0$ and $u\in \cH^1_p(\bR \times \bR^d_+) \cap C_{\text{loc}}^\infty(\bR\times \overline{\bR^d_+})$ satisfying
$$
-u_t+\cL u-\lambda u=\Div f + g
$$
in $\bR\times\bR^d_+$ with the Dirichlet boundary condition $u= 0$
on $\bR\times\partial\bR^d_+$, we have
\begin{equation*}
\lambda^{1/2}\|u\|_{L_{p}(\bR\times\bR^d_+)}+
\|Du\|_{L_{p}(\bR\times\bR^d_+)}\le N\|f\|_{L_{p}(\bR\times\bR^d_+)} + N \lambda^{-1/2}\|g\|_{L_{p}(\bR\times\bR^d_+)}.
\end{equation*}
\end{proposition}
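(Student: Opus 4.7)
The plan is to mirror the proof of Proposition \ref{prop4.4} with only cosmetic changes, replacing the first assertion of Lemma \ref{lem4.4} (conormal condition, $\cL \in \bL_2$) by its second assertion (Dirichlet condition, $\cL \in \bL_1$). The case $p=2$ is furnished by Theorem \ref{thm0411_1}. By the standard duality argument, which swaps $\bL_1$ with $\bL_2$ via transpose and swaps $p$ with $p'$ while preserving the Dirichlet boundary condition, we may reduce to the subcase $p \in [2,\infty)$ and $\cL \in \bL_1$. Observe that the dichotomy in the hypothesis is precisely the one dual to Proposition \ref{prop4.4}; this is why the assignment of $p$-ranges to $\bL_1$ versus $\bL_2$ is flipped relative to the conormal case.

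Next, assume first that $u$ vanishes outside $Q_{R_0}^+(t_1,x_1)$ for some $(t_1,x_1) \in \bR \times \overline{\bR^d_+}$. Applying the second assertion of Lemma \ref{lem4.4} and running the sharp function / Hardy--Littlewood maximal function argument exactly as in the derivation of \eqref{eq11.24}, we obtain
$$
\|D_1 u\|_{L_p(\bR\times\bR^d_+)}
\le N\|h\|_{L_p(\bR\times\bR^d_+)} + N\gamma^{1/(2\beta')} \|Du\|_{L_p(\bR\times\bR^d_+)},
$$
where $h = |f| + \lambda^{-1/2}|g|$ and $\beta > 1$ is chosen with $p > 2\beta$.

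Now we exploit the $\bL_1$ structure ($a^{i1} = 0$ for $i \ge 2$) to reduce to an equation in the whole space. Set
$$
\tilde\cL v = D_1(a^{11} D_1 v) + \sum_{i,j=2}^d D_i(a^{ij} D_j v), \quad \tilde f_1 = f_1 - \sum_{j=2}^d a^{1j} D_j u, \quad \tilde f_i = f_i \ (i \ge 2),
$$
so that $-u_t + \tilde\cL u - \lambda u = \Div \tilde f + g$ in $\bR \times \bR^d_+$. Since $u$ satisfies the Dirichlet condition at $x_1 = 0$, extend $u$, $g$, and $\tilde f_i$ ($i \ge 2$) oddly in $x_1$, and $\tilde f_1$ evenly in $x_1$; the coefficients $a^{11}$ and $a^{ij}$ ($i,j \ge 2$) are extended evenly. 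One checks that the extended $u$ belongs to $\cH_p^1(\bR \times \bR^d)$, satisfies the extended equation globally, and that the extended coefficients still satisfy Assumption \ref{assum1} with $2\gamma$ in place of $\gamma$. Applying the $L_p$-estimate for divergence-form equations in the whole space with partially BMO coefficients (Theorem 6.3 of \cite{DK11b}), bounding the extra contribution $\sum_{j \ge 2} a^{1j} D_j u$ in $\tilde f_1$ in $L_p$ by $\|Du\|_{L_p}$, and then invoking the $D_1 u$-estimate above, we arrive at
$$
\lambda^{1/2}\|u\|_{L_p} + \|Du\|_{L_p} \le N\|f\|_{L_p} + N\lambda^{-1/2}\|g\|_{L_p} + N\gamma^{1/(2\beta')}\|Du\|_{L_p},
$$
and choosing $\gamma$ small enough to absorb the last term into the left-hand side proves \eqref{eq15.16.35} for all $u$ supported in $Q_{R_0}^+(t_1,x_1)$.

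The passage to general $u \in \cH_p^1(\bR \times \bR^d_+) \cap C^\infty_{\text{loc}}(\bR \times \overline{\bR^d_+})$ is by a standard partition of unity subordinate to cubes of size $R_0$, with the commutator contributions, which are of lower order, absorbed by taking $\lambda$ larger than a threshold $\lambda_0$ depending on $R_0$; compare the proof of Theorem 5.7 in \cite{Kr07}. The main point of caution will be the extension bookkeeping: one must verify that the off-diagonal coefficients $a^{1j}$ ($j \ge 2$), which are not in $\tilde\cL$ but appear in $\tilde f_1$, are precisely those whose contribution is controlled by the localized bound on $D_1 u$ together with the smallness of $\gamma$, so that no piece of the right-hand side escapes the absorption step.
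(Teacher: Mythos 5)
There is a genuine gap at the absorption step in the case $\cL\in\bL_1$, $p\in(2,\infty)$, and it is not a bookkeeping issue but a structural one. Your whole-space reduction correctly identifies that the coefficients $a^{1j}$, $j\ge 2$, cannot be extended across $\{x_1=0\}$ (their odd extensions destroy Assumption \ref{assum1}), so the terms $D_1(a^{1j}D_ju)$ must be moved into $\tilde f_1=f_1-\sum_{j\ge2}a^{1j}D_ju$. But this expression involves $D_2u,\ldots,D_du$ and \emph{not} $D_1u$, so it is not touched by the estimate $\|D_1u\|_{L_p}\le N\|h\|_{L_p}+N\gamma^{1/(2\beta')}\|Du\|_{L_p}$ obtained from the second assertion of Lemma \ref{lem4.4}; all one can say is $\|\tilde f_1\|_{L_p}\le\|f_1\|_{L_p}+N(d,\delta)\|Du\|_{L_p}$ with a constant that is neither small nor improvable by shrinking $\gamma$. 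Hence your displayed inequality $\lambda^{1/2}\|u\|_{L_p}+\|Du\|_{L_p}\le N\|f\|_{L_p}+N\lambda^{-1/2}\|g\|_{L_p}+N\gamma^{1/(2\beta')}\|Du\|_{L_p}$ does not follow: what actually comes out of Theorem 6.3 of \cite{DK11b} is a right-hand side containing $N\|Du\|_{L_p}$ with a non-small constant, which cannot be absorbed. The mirror symmetry with Proposition \ref{prop4.4} fails precisely here: for $\cL\in\bL_2$ with the conormal condition the moved cross terms are $D_i(a^{i1}D_1u)$, $i\ge 2$, i.e.\ multiples of the one derivative that the oscillation estimate controls, whereas for $\cL\in\bL_1$ with the Dirichlet condition they are $D_1(a^{1j}D_ju)$, $j\ge 2$, i.e.\ multiples of the derivatives it does not control.

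To close the argument you need more input. Since $\cL\in\bL_1$ forces $a^{d1}=0$, Lemma \ref{lem5.34} with $k=1$ (fed through the freezing argument of Lemmas \ref{lem4.4} and \ref{lem4.8}) actually yields the mean oscillation estimate for every $D_iu$, $i=1,\ldots,d-1$, not just $D_1u$; this gives $\sum_{i\le d-1}\|D_iu\|_{L_p}\le N\|h\|_{L_p}+N\gamma^{1/(2\beta')}\|Du\|_{L_p}$ and disposes of the contributions $a^{1j}D_ju$ for $2\le j\le d-1$. The term $a^{1d}D_du$ remains uncontrolled, since no oscillation estimate is available for the normal-to-$x_d$ derivative, and one must introduce an additional device to make its coefficient small — for instance the anisotropic scaling $x'\mapsto x'/\varepsilon$ used in the proof of Proposition \ref{prop5.7}, which replaces $a^{1d}$ by $\varepsilon a^{1d}$ (and then requires checking that the rescaled coefficients still satisfy Assumption \ref{assum1}). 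Without an ingredient of this kind the proof does not close, and your concluding "point of caution" is in fact the place where the proposed argument breaks down.
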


\subsection{Proofs of Theorems \ref{thm3} and \ref{thm4}}

\begin{proof}[Proof of Theorem \ref{thm3}]
As in the proof of Theorem \ref{thm01}, it suffices us to prove the a priori estimate \eqref{eq15.16.02} when $T = \infty$ assuming that $u \in C_0^{\infty}(\bR \times\overline{\Xi^k})$, $f \in C_0^{\infty}(\bR \times \overline{\Xi^k})$, and $a^{ij}$ are infinitely differentiable.
At the moment, we also assume that $b^i \equiv c \equiv 0$ and $u$ vanishes outside $B_{R_0}^+(t_1,x_1)$ for some $(t_1,x_1) \in \bR \times \overline{\bR^d_+}$.
Then from Corollary \ref{cor4.7} we obtain
\begin{multline}
							\label{eq0531_3}
\|u_t\|_{L_p(\bR\times\bR^d_+)}+\|DD_{x''} u\|_{L_p(\bR\times\bR^d_+)}+\lambda \|u\|_{L_p(\bR\times\bR^d_+)}
\\
\le N\gamma^{\alpha_1} \|D^2 u\|_{L_p(\bR\times\bR^d_+)}+N_1\|f\|_{L_p(\bR\times\bR^d_+)},
\end{multline}
where $N=N(d,\delta,p)$ and $N_1 = N_1(d,\delta,p,\gamma)$.

To apply the estimates for divergence equations obtained in the previous subsection, we divide both sides of the equation \eqref{eq15.16.01} by $a^{11}$, then add $-u_t$, $\Delta_{x''}u = D_{x_2x_2}u + \ldots + D_{x_{d-1}x_{d-1}}u$, and $- \lambda u$. We then get
\begin{multline}
							\label{eq0531_1}
-u_t + \Delta_{x'} u + \frac{a^{1d} + a^{d1}}{a^{11}} D_{d1} u + \frac{a^{dd}}{a^{11}} D_{dd} u - \lambda u
\\
= (1/a^{11} - 1) u_t + \lambda (1/a^{11} - 1) u + f/a^{11} + \Delta_{x''} u - \sum_{i,j=1}^d \hat a^{ij} D_{ij} u,
\end{multline}
where $\hat a^{ij} = a^{ij}/a^{11}$ for all $i, j = 1, \ldots, d$, except the following terms:
$$
\hat a^{11} = \hat a^{1d} = \hat a^{d1}  = \hat a^{dd} = 0.
$$
Set $\tilde a^{d1} = (a^{1d} + a^{d1})/a^{11}$, $\tilde a^{dd} = a^{dd}/a^{11}$, and
$$
\tilde f = (1/a^{11} - 1) u_t + \lambda (1/a^{11} - 1) u + f/a^{11} + \Delta_{x''} u  - \sum_{i,j=1}^d \hat a^{ij} D_{ij} u.
$$
Then the equation \eqref{eq0531_1} turns into
\begin{equation}
							\label{eq0531_2}
-u_t + \Delta_{x'} u + \tilde a^{d1} D_{d1} u + \tilde a^{dd} D_{dd} u - \lambda u = \tilde f
\end{equation}
in $\bR \times \bR^d_+$.
Denote $w = D_d u$
and
$$
\cL w = \Delta_{x'} w + D_d(\tilde a^{d1} D_1 w) + D_d ( \tilde a^{dd} D_d w ).
$$
By differentiating the equation \eqref{eq0531_2} in $x_d$, we see that $w$ satisfies the following divergence type equation
\begin{equation}
							\label{eq0604_6}
-w_t + \cL w - \lambda w
= D_d \tilde f
\end{equation}
with the Dirichlet boundary condition $w = 0$ on $\bR \times \partial \bR^d_+$.
Notice that $\cL \in \bL_2$ and the coefficients of $\cL$ satisfy Assumption \ref{assum1} with $N(\delta) \gamma$ in places of $\gamma$.
Then by Proposition \ref{prop4.5} there exist $\gamma_1 \in (0,1)$ and $N$, depending only on $d, \delta, p$, and $\lambda_0\ge 0$ depending only on these parameters as well as $R_0$, such that under Assumption \ref{assum1} with any $\gamma\in (0,\gamma_1]$ and the condition $\lambda > \lambda_0$, we have
\begin{equation}
							\label{eq0531_4}
\|DD_du\|_{L_p(\bR \times \bR^d_+)}
= \|Dw\|_{L_p(\bR \times \bR^d_+)}
\le N \|\tilde f\|_{L_p(\bR \times \bR^d_+)}.
\end{equation}
On the other hand, by the definition of $\tilde f$ and \eqref{eq0531_3},
\begin{align*}
\|\tilde f\|_{L_p(\bR\times\bR^d_+)}
&\le N \|u_t\|_{L_p(\bR\times\bR^d_+)}
+ N \|DD_{x''}u\|_{L_p(\bR\times\bR^d_+)}\\
&\quad+ N \lambda \|u\|_{L_p(\bR\times\bR^d_+)}
+ N \|f\|_{L_p(\bR\times\bR^d_+)},
\end{align*}
where $N=N(d,\delta)$.
This combined with \eqref{eq0531_3} and \eqref{eq0531_4} gives
\begin{align*}
&\|u_t\|_{L_p(\bR\times\bR^d_+)}+\sum_{ij > 1}\|D_{ij} u\|_{L_p(\bR\times\bR^d_+)}+\lambda \|u\|_{L_p(\bR\times\bR^d_+)}\\
&\quad\le N\gamma^{\alpha_1} \|D^2 u\|_{L_p(\bR\times\bR^d_+)}+N_1\|f\|_{L_p(\bR\times\bR^d_+)}.
\end{align*}
The missing term $D_1^2u$ from the left-hand side of the above inequality can be estimated by the above estimate and the equation itself. Therefore, we obtain
\begin{align*}
&\|u_t\|_{L_p(\bR\times\bR^d_+)}+\|D^2 u\|_{L_p(\bR\times\bR^d_+)}+\lambda \|u\|_{L_p(\bR\times\bR^d_+)}\\
&\quad\le N\gamma^{\alpha_1} \|D^2 u\|_{L_p(\bR\times\bR^d_+)}+N_1\|f\|_{L_p(\bR\times\bR^d_+)}.
\end{align*}
Now we choose $\gamma$ small enough so that $\gamma \le \gamma_1$ and $N\gamma^{\alpha_1} < 1/2$.
Then using the interpolation argument to obtain $\lambda^{1/2}\|Du\|_{L_p(\bR \times \bR^d_+)}$, we finally obtain \eqref{eq15.16.02}

Next, using the partition of unity argument, moving the $b^i D_i u$ and $cu$ terms to the right-hand side of the equation, taking $\lambda_0$ even larger, and using interpolation inequalities, we remove the restrictions that $b^i \equiv c \equiv 0$ and $u$ vanishes outside $B^+_{R_0}(t_1, x_1)$ in the previous step.
The theorem is proved.
\end{proof}

\begin{proof}[Proof of Theorem \ref{thm4}]
The proof repeats the same lines in the proof of Theorem \ref{thm3} except that the function $w: = D_d u$ satisfies \eqref{eq0604_6} with the conormal derivative condition $D_1 w = 0$ on $\bR \times \partial \bR^d_+$, and we use Proposition \ref{prop4.4} instead of Proposition \ref{prop4.5} and the second assertion of Corollary \ref{cor4.7} instead of the first one. We omit the details.
\end{proof}

\bibliographystyle{plain}

\end{document}